\numberwithin{equation}{section}
\theoremstyle{plain}
\newtheorem{theorem}[subsection]{Theorem}
\newtheorem{proposition}[subsection]{Proposition}
\newtheorem{lemma}[subsection]{Lemma}
\newtheorem{corollary}[subsection]{Corollary}
\newtheorem{conjecture}[subsection]{Conjecture}
\theoremstyle{definition}
\newtheorem{definition}[subsection]{Definition}
\renewcommand{\leq}{\leqslant}
\renewcommand{\geq}{\geqslant}
\newsavebox{\proofbox}
\savebox{\proofbox}{\begin{picture}(7,7)%
  \put(0,0){\framebox(7,7){}}\end{picture}}
\newcommand{\md}[1]{\ensuremath{(\operatorname{mod}\, #1)}}
\newcommand{\mdsub}[1]{\ensuremath{(\mbox{\scriptsize mod}\, #1)}}
\newcommand\triv{{\operatorname{triv}}}
\newcommand\E{\mathbb{E}}
\newcommand\Z{\mathbb{Z}}
\newcommand\R{\mathbb{R}}
\newcommand\C{\mathbb{C}}
\newcommand\Factor{\mathcal{F}}
\renewcommand\P{\mathbb{P}}
\newcommand\F{\mathbb{F}}
\renewcommand\b{{\bf b}}
\newcommand\eps{\varepsilon}
\newcommand\rank{\operatorname{rank}}
\newcommand\Span{\operatorname{Span}}
\newcommand\obs{\operatorname{obs}}
\renewcommand\th{\operatorname{th}}
\newcommand\st{\operatorname{st}}
\def\proof{\noindent\textit{Proof. }}
\def\remark{\noindent\textit{Remark. }}
\def\remarks{\noindent\textit{Remarks. }}
\def\endproof{\hfill{\usebox{\proofbox}}}
\begin{document}

\title[Polynomials over finite fields and Gowers norms]{The distribution of polynomials over finite fields, with applications to the Gowers norms}

\author{Ben Green}
\address{Centre for Mathematical Sciences\\
Wilberforce Road\\
     Cambridge CB3 0WA\\
     England
}
\email{b.j.green@dpmms.cam.ac.uk}

\author{Terence Tao}
\address{UCLA Department of Mathematics, Los Angeles, CA 90095-1596.
}
\email{tao@math.ucla.edu}

\thanks{The first author is a Clay Research Fellow and gratefully acknowledges the support of the Clay Institute. The second author is supported by a grant from the MacArthur Foundation, and by NSF grant CCF-0649473.}

\subjclass{}

\begin{abstract}
In this paper we investigate the uniform distribution properties of polynomials in many variables and bounded degree over a fixed finite field $\F$ of prime order. Our main result is that a polynomial $P : \F^n \rightarrow \F$ is poorly-distributed only if $P$ is determined by the values of a few polynomials of lower degree, in which case we say that $P$ has \emph{small rank}.

We give several applications of this result, paying particular attention to consequences for the theory of the so-called \emph{Gowers norms}. We establish an inverse result for the Gowers $U^{d+1}$-norm of functions of the form $f(x)= e_\F(P(x))$, where $P : \F^n \rightarrow \F$ is a polynomial of degree less than $|\F|$, showing that this norm can only be large if $f$ correlates with $e_\F(Q(x))$ for some polynomial $Q : \F^n \rightarrow \F$ of degree at most $d$.

The requirement $\deg(P) < |\F|$ cannot be dropped entirely.  Indeed, we show the above claim fails in characteristic $2$ when $d = 3$ and $\deg(P)=4$, showing that the quartic symmetric polynomial $S_4$ in $\F_2^n$ has large Gowers $U^4$-norm but does not correlate strongly with any cubic polynomial. This shows that the theory of Gowers norms in low characteristic is not as simple as previously supposed.  This counterexample has also been discovered independently by Lovett, Meshulam, and Samorodnitsky \cite{lms}.

We conclude with sundry other applications of our main result, including a recurrence result and a certain type of nullstellensatz.
\end{abstract}

\maketitle

\section{Introduction}

Let $\F$ be a finite field of prime order.  Throughout this paper, $\F$ will be considered fixed (e.g. $\F = \F_2$ or $\F = \F_3$) and we shall be working inside the $n$-dimensional vector spaces $\F^n$ over $\F$ for various natural numbers $n$. More generally, any linear algebra term (e.g. span, independence, basis, subspace, linear transformation, etc.) will be understood to be over the field $\F$.

If $f: \F^n \to \C$ is a function, and $h \in \F^n$ is a shift, we define the (multiplicative) derivative $\Delta_h f: \F^n \to \C$ of $f$ by the formula
$$ \Delta_h f(x) := f(x+h) \overline{f(x)}.$$
An important special case arises when $f$ takes the form $f = e_\F(P)$, where $P: \F^n \to \F$ is a function, and $e_\F: \F \to \C$ is the standard character $e_\F(j) := e^{2\pi i j/|\F|}$ for $j=0,\ldots,|\F|-1$.  In that case we see that $\Delta_h f = e_\F( D_h P )$, where $D_h P: \F^n \to \F$ is the (additive) derivative of $P$, defined as
$$ D_h P(x) := P(x+h) - P(x).$$

Given an integer $d \geq 0$, we say that a function $P: \F^n \to \F$ is a \emph{polynomial of degree at most $d$} if we have $D_{h_1} \ldots D_{h_{d+1}} P = 0$ for all $h_1,\ldots,h_{d+1} \in \F^n$, and write $\mathcal{P}_d(\F^n)$ for the space of all polynomials on $\F^n$ of degree at most $d$. Thus for instance $\mathcal{P}_0(\F^n)$ is the space of constants, $\mathcal{P}_1(\F^n)$ is the space of linear polynomials on $\F^n$, $\mathcal{P}_2(\F^n)$ is the space of quadratic polynomials, and so forth.  It is easy to see that $\mathcal{P}_d(\F^n)$ is a vector space and that, with an obvious notation, the monomials $x_1^{i_1} \ldots x_n^{i_n}$ for $0 \leq i_1,\ldots, i_n < |\F|$ and $i_1+\ldots+i_n \leq d$ form a basis.  (The restriction $i_1,\ldots,i_n <|\F|$ arises of course from the fact that $x^{|\F|} = x$ for all $x \in \F$.)  We shall say that a function $f: \F^n \to \C$ is a \emph{polynomial phase of degree at most $d$} if it takes the form $f = e_\F(P)$ for some $P \in \mathcal{P}_d(\F^n)$, or equivalently if all $(d+1)^{\st}$ multiplicative derivatives $\Delta_{h_1} \ldots \Delta_{h_{d+1}} f$ are identically $1$.

It is of interest to test for the property that a function $P: \F^n \to \F$ is ``close to'' a polynomial of degree at most $d$, or to test for the closely related property that a function $f: \F^n \to \C$ ``correlates'' with a polynomial phase of degree at most $d$.  One proposal to perform such a test goes by the name of the \emph{Inverse Conjecture for the Gowers norms} (see e.g. \cite{bv,gt:inverse-u3,tao-stoc}), which roughly speaking asserts that a function $f$ correlates with a polynomial phase of degree at most $d$ if and only if the $(d+1)^{\st}$ multiplicative derivatives of $f$ are biased.  To describe this conjecture more precisely, we need some further notation.  

\begin{definition}[Gowers uniformity norm]\label{gd}\cite{gowers-4}, \cite{gowers}  Let $f: \F^n \to \C$ be a function, and let $d \geq 0$ be an integer.  We then define the \emph{Gowers norm} $\|f\|_{U^{d+1}}$ of $f$ to be the quantity\footnote{Here, as in all our papers, the expectation notation $\E_{x \in S}$ refers to the average $\frac{1}{|S|}\sum_{x \in S}$ over some finite non-empty set $S$. In this particular example, $S = (\F^n)^{d+1}$.}
$$ \|f\|_{U^{d+1}} := |\E_{h_1,\ldots,h_d,x \in \F^n} \Delta_{h_1} \ldots \Delta_{h_{d+1}} f(x)|^{1/2^{d+1}},$$
thus $\|f\|_{U^{d+1}}$ measures the average bias in $(d+1)^{\st}$ multiplicative derivatives of $f$.
We also define the \emph{weak Gowers norm} $\|f\|_{u^{d+1}}$ of $f$ to be the quantity
$$ \|f\|_{u^{d+1}} := \sup_{Q \in \mathcal{P}_d(\F^n)} |\E_{x \in \F^n} f(x) e_\F(-Q(x))|,$$
thus $\|f\|_{u^{d+1}}$ measures the extent to which $f$ can correlate with a polynomial phase of degree at most $d$.
\end{definition}

\begin{remark}  
It can in fact be shown that the Gowers and weak Gowers norm are in fact norms for $d \geq 2$ (and seminorms for $d=1$), see e.g. \cite{gowers,tao-vu}.  Further discussion of these two norms can be found in \cite{gt:inverse-u3}.  
\end{remark}

The Gowers norm and weak Gowers norm are closely related; for instance, one easily verifies the invariance 
\begin{equation}\label{polyphase}
\| f g \|_{U^{d+1}} =  \|f\|_{U^{d+1}} \hbox{ and }
\| f g \|_{u^{d+1}} =  \|f\|_{u^{d+1}}
\end{equation}
for all polynomial phases $g$ of degree at most $d$, and from this and the Cauchy-Schwarz-Gowers inequality (see e.g. \cite{tao-vu}) one can also verify the bound
\begin{equation}\label{uU}
 \| f\|_{u^{d+1}} \leq \|f\|_{U^{d+1}}
 \end{equation}
whenever $f$ is bounded in magnitude by $1$.  In the converse direction the following had been suggested, and was stated formally\footnote{The first-named author would like to make it clear that he also believed the conjecture.} in \cite{sam,tao-stoc}.

\begin{conjecture}[Inverse conjecture for the Gowers norm]\label{ig} Let $d \geq 0$, let $\delta \in (0,1]$, and $\F$ be a fixed finite field.  Suppose that $f : \F^n \rightarrow \C$ is a function with $|f(x)| \leq 1$ for all $x \in \F^n$ and for which $\Vert f \Vert_{U^{d+1}} \geq \delta$. Then
$\Vert f \Vert_{u^{d+1}} \gg_{d,\delta,\F} 1$; that is to say, there is some $c = c(d,\delta,\F) > 0$ such that $\Vert f \Vert_{u^{d+1}} \geq c$.
\end{conjecture}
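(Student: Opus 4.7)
My approach would be induction on $d$. For $d=0$ the claim is immediate from $\|f\|_{U^1}=|\E_x f(x)|=\|f\|_{u^1}$ (taking $Q=0$), so one may take $c=\delta$.

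For the inductive step, I would use the identity
$$\|f\|_{U^{d+1}}^{2^{d+1}}=\E_{h\in\F^n}\|\Delta_h f\|_{U^d}^{2^d}.$$
If $\|f\|_{U^{d+1}}\ge\delta$, Markov's inequality yields a set $H\subseteq\F^n$ of density $\gg_{d,\delta,\F} 1$ on which $\|\Delta_h f\|_{U^d}\gg_{d,\delta,\F} 1$. The inductive hypothesis then produces, for each $h\in H$, a polynomial $Q_h\in\mathcal{P}_{d-1}(\F^n)$ with
$$|\E_{x\in\F^n}\Delta_h f(x)\,\overline{e_\F(Q_h(x))}|\gg_{d,\delta,\F}1.$$

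The heart of the argument is the \emph{gluing step}: find a single polynomial $P\in\mathcal{P}_d(\F^n)$ such that the derivative $D_h P$ matches $Q_h$ (up to a lower-order correction) for many $h\in H$. A further Cauchy--Schwarz on the above correlation in a second variable $k$ should show that $Q_{h+k}(x)-Q_h(x+k)-Q_k(x)$ has small $U^{d-1}$-norm, and hence by induction is close to a polynomial in $\mathcal{P}_{d-2}(\F^n)$ for many pairs $(h,k)$. This is an approximate additive cocycle relation on $H$. Invoking the main theorem of the paper (that poorly-distributed polynomials must have small rank, hence be determined by few polynomials of strictly lower degree) should allow this approximate cocycle to be rigidified into an exact one on a large linearly-structured subset, and then trivialised to produce the desired antiderivative $P$; a final averaging argument converts the resulting coboundary relation into the correlation $\|f\|_{u^{d+1}}\gg 1$.

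The decisive obstacle is the final trivialisation step. Producing an antiderivative $P$ from an additive cocycle $(Q_h)$ with values in $\mathcal{P}_{d-1}(\F^n)$ is a cohomological question about the map $P\mapsto(D_hP)_h$, and in characteristic zero it is essentially a routine piece of linear algebra. Over $\F=\F_p$ with $d\ge p$, however, this map acquires genuine obstructions---the same ones signalled by the $S_4$ counterexample advertised in the abstract. I would therefore expect the induction to go through only once the additional hypothesis $d<|\F|$ is imposed on the degrees encountered along the way, matching the restriction $\deg(P)<|\F|$ present in the authors' corrected inverse theorem. Without such a hypothesis the scheme above cannot succeed as stated, and any repair must either enlarge the class of test phases beyond classical polynomials or absorb the obstruction algebraically.
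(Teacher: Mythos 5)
You were asked to prove a statement that the paper itself labels a \emph{conjecture}, and which in fact the paper goes on to \emph{disprove}: Theorem \ref{main-1} exhibits $f = (-1)^{S_4}$ on $\F_2^n$ with $\|f\|_{U^4}$ bounded away from zero but $\|f\|_{u^4} = o(1)$, so Conjecture \ref{ig} is false for $\F = \F_2$, $d = 3$ as stated. You correctly sense this at the end of your proposal, and your final paragraph essentially concedes that no proof can exist without restricting the characteristic. That instinct is right, and should be stated at the outset rather than discovered at the end: the object you were handed is not a theorem.

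Beyond that, the proof scheme you sketch does not match the route the paper actually takes even in the cases where something \emph{can} be proved. The paper establishes only the special case Theorem \ref{main-2}, where $f = e_\F(P)$ is itself a polynomial phase of degree $k < |\F|$, and it does so not via the derivative-cocycle machinery but via the equidistribution result Theorem \ref{wtn} (bias implies bounded rank) together with the regularity lemma, Lemma \ref{regular}: one writes $f$ as a bounded combination of lower-degree phases measurable with respect to a regular factor, splits off the genuinely high-degree part, and shows that part has small $U^{d+1}$-norm. Your proposed route --- take derivatives, get $Q_h$ for $h$ in a dense set $H$ by the inductive hypothesis, show the $Q_h$ form an approximate cocycle, rigidify, and antidifferentiate --- is the classical strategy used for $U^3$ in \cite{gt:inverse-u3} and \cite{sam}, but the rigidification and trivialisation steps for general $d$ require substantial structural input that this paper does not supply and that your sketch does not provide. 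In particular, your claim that Theorem \ref{wtn} lets you ``rigidify'' the approximate cocycle on $H$ is not justified: $H$ is an arbitrary dense subset of $\F^n$, not a linearly structured one, and passing from an approximate cocycle on such a set to an exact one on a bounded-index subgroup is precisely the hard symmetry/cohomology argument that the inductive approach has historically struggled with at degrees $\geq 3$; the paper sidesteps it entirely by working only with polynomial phases. Finally, even granting the restriction $d < |\F|$, your argument would purport to handle arbitrary $1$-bounded $f$, which is strictly stronger than anything proved here; that gap is not a technicality but the essential difficulty.
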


This conjecture has been verified in a number of special cases.  For instance the case $d=0$ is trivial, and the case $d=1$ is easily established by Plancherel's theorem.  The case $d=2$ was established odd characteristic in \cite{gt:inverse-u3} and in the case $|\F|=2$ (which is of particular interest in theoretical computer science) in \cite{sam}.  The case when $\delta$ is sufficiently close to $1$ (depending on $d$ and $\F$) was established in \cite{akklr} (see also the earlier related work of \cite{blm} in the case $d=1$, and \cite{stv} in the case when $|\F|$ is assumed large compared to $d$ and $\delta$).

One of our results in this paper establishes a further special case of the conjecture, when the function $f$ is itself a polynomial phase, and the characteristic of $\F$ is not too small.

\begin{theorem}[Inverse conjecture for polynomial phases]\label{main-2} Suppose that $0 \leq d, k < |\F|$, and that $\delta \in (0,1]$. Let $P : \F^n \rightarrow \F$ be a polynomial of degree $k$, write $f(x) := e_{\F}(P(x))$, and suppose that $\| f \|_{U^{d+1}} \geq \delta$.  Then we have $\| f \|_{u^{d+1}} \gg_{\F,\delta} 1$.  
\end{theorem}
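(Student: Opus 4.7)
The plan is to leverage the paper's main structural theorem---that any polynomial of degree less than $|\F|$ whose bias is non-negligible must be a function of a bounded number of strictly-lower-degree polynomials---to convert the Gowers-norm hypothesis on $f=e_\F(P)$ into a factorization of $P$ through $O_{\F,\delta}(1)$ polynomials of degree at most $d$, whence a Fourier expansion on the bounded-dimensional factor yields the desired correlation. First, if $k \leq d$ then $P \in \mathcal{P}_d(\F^n)$ and $\|f\|_{u^{d+1}} = 1$ (take $Q = P$ in the sup), so assume $d < k < |\F|$. Raising the hypothesis to the $2^{d+1}$-th power gives
\[
\delta^{2^{d+1}} \;\leq\; \E_{h_1,\ldots,h_{d+1},\, x\, \in\, \F^n}\; e_\F\!\left( D_{h_1}\cdots D_{h_{d+1}} P(x) \right),
\]
which is precisely the bias on $(\F^n)^{d+2}$ of the polynomial $Q(h_1,\ldots,h_{d+1},x) := D_{h_1}\cdots D_{h_{d+1}} P(x)$, of total degree at most $k < |\F|$. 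The main distribution theorem therefore forces $Q$ to have bounded rank, in terms of $\F$ and $\delta$ only.

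The main obstacle is to descend from this structural bound on $Q$ to one on $P$ itself: we want a factoring $P(x) = F(R_1(x),\ldots,R_r(x))$ with $F:\F^r \to \F$, $R_1,\ldots,R_r \in \mathcal{P}_d(\F^n)$, and $r = O_{\F,\delta}(1)$. My plan is a downward induction on the top degree of $P$. The part of $Q$ that is multilinear of degree one in each $h_i$ and of degree $k-d-1$ in $x$ coincides, up to the scalar $k(k-1)\cdots(k-d)$, with the $(d+1)$-fold polarization of the top-degree part $P_k$ of $P$; this scalar is nonzero in $\F$ precisely because $d < k < |\F|$. Transferring the bounded-rank structure on $Q$ through this polarization---iterating the main theorem if intermediate factors are themselves of degree exceeding $d$---yields a bounded-complexity presentation of $P_k$ as a function of polynomials in $\mathcal{P}_d(\F^n)$. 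Replacing $P$ by $P$ minus such a presentation reduces to a polynomial of degree $\leq k-1$, and one checks that its Gowers $U^{d+1}$-norm remains bounded below, so the induction continues and terminates after at most $k-d$ stages.

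Finally, one further application of the main theorem refines $(R_1,\ldots,R_r)$ to be \emph{regular}, meaning $\E_x e_\F(c \cdot R(x)) = o(1)$ for every nonzero $c \in \F^r$. Setting $G := e_\F \circ F : \F^r \to \C$ and Fourier-expanding on the finite group $\F^r$,
\[
f(x) \;=\; \sum_{a \in \F^r} \widehat G(a)\, e_\F\!\bigl(a_1 R_1(x) + \cdots + a_r R_r(x)\bigr).
\]
Parseval gives $\sum_a |\widehat G(a)|^2 = 1$, so some $|\widehat G(a_0)| \geq |\F|^{-r/2}$, and by regularity $|\E_x f(x)\, e_\F(-a_0 \cdot R(x))| \geq |\F|^{-r/2} - o(1)$. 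Since $a_0 \cdot R \in \mathcal{P}_d(\F^n)$, this provides the required lower bound on $\|f\|_{u^{d+1}}$, depending only on $\F$ and $\delta$.
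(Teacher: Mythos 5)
Your plan hinges on an intermediate claim that is actually false: you want to show that if $\|e_\F(P)\|_{U^{d+1}} \geq \delta$ then $P$ has \emph{bounded degree-$d$ rank}, i.e.\ $P = B(R_1,\dots,R_r)$ with $R_i \in \mathcal{P}_d(\F^n)$ and $r \ll_{\F,\delta} 1$. That would follow from your ``downward induction'' terminating after $k-d$ stages, and is what feeds your final Fourier/Parseval argument. But consider $d=1$, $k=3$, $|\F|=p>3$, and $P = L\cdot Q$ with $L$ linear and $Q$ a quadratic of large rank. Then $\widehat{e_\F(P)}(0) = \E_x e_\F(L(x)Q(x)) \approx 1/p$ (the summand is $1$ on the hyperplane $L=0$, and equidistributed off it), so $\|e_\F(P)\|_{U^2} \gg_\F 1$, yet $\rank_1(P)$ grows without bound with $n$. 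The conclusion of the theorem ($\|f\|_{u^{d+1}} \gg 1$, i.e.\ mere correlation with \emph{one} degree-$d$ phase) is genuinely weaker than ``bounded degree-$d$ rank,'' and the paper's own proof is structured precisely so as never to need the stronger statement.

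There are also two concrete technical gaps in the reduction. First, Theorem \ref{wtn} applied to $Q(h_1,\dots,h_{d+1},x)$ on $(\F^n)^{d+2}$ gives a factorization $Q = B(S_1,\dots,S_m)$ through polynomials $S_i$ of degree $\leq k-1$ on the product space. These $S_i$ carry no multihomogeneous structure, so projecting $Q$ onto its multilinear-in-$h$, degree-$(k-d-1)$-in-$x$ component does \emph{not} produce a comparable factorization of the polarization of $P_k$; the rank structure simply does not pass through a bigrading extraction. Second, even if one obtained $P_k = G(R_1,\dots,R_r)$ with $R_i \in \mathcal{P}_d$, the function $G(R_1,\dots,R_r)$ is not itself a phase of degree $\leq d$, so subtracting it from $P$ does not preserve the $U^{d+1}$-norm via the invariance \eqref{polyphase}; ``one checks that its Gowers $U^{d+1}$-norm remains bounded below'' is asserted but is exactly where the argument breaks.

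For contrast, the paper inducts on $k$ differently. Using \emph{monotonicity} of Gowers norms, $\|f\|_{U^{d+1}} \geq \delta$ implies $\|f\|_{U^{k+1}} \geq \delta$, and Proposition \ref{gowers-1} (which, at degree $k$, works because $\partial^{k}P$ is \emph{constant in $x$}, so Taylor's formula transfers rank directly) gives $\rank_{k-1}(P) \ll 1$ --- a bound at level $k-1$, not $d$. One then regularizes the resulting degree-$(k-1)$ factor, Fourier-expands $f$, and splits it as $f_1 + f_2$ where $f_1$ uses only degree-$\leq d$ polynomials and $f_2$ has some degree-$>d$ component; regularity plus Theorem \ref{wtn} plus the induction hypothesis show both $\|f_2\|_{U^{k+1}}$ and the correlation of $f_2$ with any degree-$d$ phase are negligible, from which $f_1$ must be non-trivial and pigeonhole finishes. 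Your final Parseval step is essentially the base case $k = d+1$ of the paper's induction, where the degree-$d$-rank claim \emph{is} true (via Prop.\ \ref{gowers-1}); for $k > d+1$ that claim fails and the extra splitting argument is indispensable.
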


Note carefully the lower bound on the characteristic $|\F|$ of $\F$. It turns out that some such restriction is necessary, and indeed that Conjecture \ref{ig} is false without some modification. This is elucidated by the following example, which we shall analyse in \S \ref{counter-sec}.  For any $d \geq 0$ and any vector space $\F^n$, let $S_d \in \mathcal{P}_d(\F^n)$
be the symmetric polynomial of degree $d$:
\begin{equation}\label{pdef}
 S_d(x_1,\ldots,x_n) := \sum_{1 \leq i_1 < \ldots < i_d \leq n} x_{i_1} \ldots x_{i_d}.
\end{equation}

\begin{theorem}[Counterexample for the $U^4$-norm in $\F_2$]\label{main-1}  Let $n$ be a large integer.  Then the function $f: \F_2^n \to \{-1,1\}$ defined by $f := e_{\F_2}(S_4) = (-1)^{S_4}$ is such that
\begin{equation}\label{u4}
\|f\|_{U^4}^{16} = \frac{1}{8} + O( 2^{-n/2} )
\end{equation}
but such that
\begin{equation}\label{u4b}
 \|f\|_{u^4} \ll (\log n)^{-c}
\end{equation}
for some absolute constant $c > 0$.
\end{theorem}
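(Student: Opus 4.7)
The starting point is the identity $S_4(x) \equiv \binom{W(x)}{4} \pmod 2$, provided by Lucas's theorem, where $W(x) := \sum_i x_i$ is the integer-valued Hamming weight. Since $\binom{W}{4}$ is odd exactly when the coefficient of $2^2$ in the binary expansion of $W$ is $1$, the function $f = (-1)^{S_4}$ factors as $f(x) = g(W(x))$, where $g : \mathbb{Z} \to \{\pm 1\}$ has period $8$ with $g(w) = 1$ for $w \bmod 8 \in \{0,1,2,3\}$ and $g(w) = -1$ otherwise. In particular $f$ is permutation-invariant in the $n$ coordinates, a fact that will drive both halves of the proof.

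\textbf{Computation of $\|f\|_{U^4}$.} Because $\deg S_4 = 4$, the fourfold discrete derivative $D_{h_1}D_{h_2}D_{h_3}D_{h_4} S_4$ is independent of $x$; direct expansion identifies it with the multilinear form $M(h_1,\ldots,h_4) := \sum_{i_1,i_2,i_3,i_4 \text{ distinct}} h_{1,i_1}h_{2,i_2}h_{3,i_3}h_{4,i_4}$, so $\|f\|_{U^4}^{16} = \E_{h_1,\ldots,h_4}(-1)^{M(h)}$. I would then expand $M$ via Möbius inversion on the partition lattice of $\{1,2,3,4\}$: partitions containing a block of size $3$ or $4$ carry even Möbius coefficients ($2, -6$) and thus drop out modulo $2$, leaving only contributions from partitions into singletons and pairs. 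A short manipulation rewrites the surviving terms as
\[
M \equiv q_{12}q_{34} + q_{13}q_{24} + q_{14}q_{23} \pmod 2, \qquad q_{ij} := \ell_i \ell_j + p_{ij},
\]
where $\ell_k := \sum_i h_{k,i} \bmod 2$ and $p_{ij} := \langle h_i, h_j\rangle \bmod 2$. Viewing the $n$ columns $(h_{k,i})_{k=1}^4$ as i.i.d.\ uniform on $\mathbb{F}_2^4$, a standard character estimate (no nonzero character has magnitude exceeding $\tfrac12$) shows that $(\ell,p) \in \mathbb{F}_2^{10}$ is $O(2^{-n})$-close to uniform, and hence so is $(\ell,q)$ via the bijective substitution. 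Since the three products $q_{12}q_{34}, q_{13}q_{24}, q_{14}q_{23}$ involve disjoint pairs of $q$-variables, the expectation over uniform $q$ factorises as $(\tfrac12)^3 = \tfrac18$, yielding \eqref{u4}.

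\textbf{Bound on $\|f\|_{u^4}$: setup.} Fix $Q \in \mathcal{P}_3(\mathbb{F}_2^n)$. Permutation-invariance of $f$ lets us average $(-1)^{Q(\sigma x)}$ over $\sigma \in \operatorname{Sym}(n)$ inside the correlation, giving
\[
\E_x f(x)(-1)^{Q(x)} = \sum_{w=0}^n \binom{n}{w} 2^{-n}\, g(w)\, \psi(w),
\]
where $\psi(w) := \E[(-1)^{Q(Y)}]$ for $Y$ uniform on the $w$-subsets of $[n]$, and $|\psi|\leq 1$. Fourier-expanding $g$ on $\mathbb{Z}/8\mathbb{Z}$ (its transform is supported on the odd frequencies $j\in\{1,3,5,7\}$) reduces the claim to bounding, for each such $j$, the quantity $\bigl|\sum_w \binom{n}{w} 2^{-n}\,\omega_8^{jw}\,\psi(w)\bigr|$ by $(\log n)^{-c}$, where $\omega_8 := e^{2\pi i/8}$.

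\textbf{Bound on $\|f\|_{u^4}$: main estimate.} Inclusion-exclusion on the cubic monomial support $\mathcal{S} = \{T : c_T = 1\}$ of $Q$ expresses $\psi$ in the falling-factorial form $\psi(w) = \sum_{u\geq 0} a_u \binom{w}{u}/\binom{n}{u}$, with $a_u := \sum_{S \subseteq \mathcal{S},\, |\bigcup T| = u}(-2)^{|S|}$. The classical identity $\sum_w \binom{n}{w} 2^{-n}\omega_8^{jw}\binom{w}{u} = \binom{n}{u}\omega_8^{ju}2^{-u}\bigl(\tfrac{1+\omega_8^j}{2}\bigr)^{n-u}$ then damps each term by $\cos(j\pi/8)^{n-u} \leq \cos(\pi/8)^{n-u}$, which is exponentially small whenever $u$ is bounded away from $n$. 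The hardest step—and the one I expect to be the main obstacle—is to combine this exponential decay with the pointwise bound $|\psi|\leq 1$ on $\{0,\ldots,n\}$ to handle large $u$: a natural approach is to split at a threshold $u \asymp n/\log n$, estimating small-$u$ terms via the exponential factor and bounding large-$u$ terms by an $L^2$ estimate on $\psi$ in the Hahn polynomial basis for the binomial measure (using the constraints $|\psi|\leq 1$ and that $Q$ has at most $O(n^3)$ coefficients). The polylogarithmic loss $(\log n)^{-c}$ in \eqref{u4b} emerges from this threshold tuning and is the principal technical content of the proof.
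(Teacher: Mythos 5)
Your computation of $\|f\|_{U^4}$ is essentially the same as the paper's and is correct: you identify $D_{h_1}D_{h_2}D_{h_3}D_{h_4}S_4$ with $q_{12}q_{34}+q_{13}q_{24}+q_{14}q_{23}$, where $q_{ij}=\ell_i\ell_j+\langle h_i,h_j\rangle$ coincides (over $\F_2$) with the paper's $B(h_i,h_j)=\sum_{r\neq s}h_{i,r}h_{j,s}$; you then equidistribute the six bilinear quantities via a Gauss-sum type estimate and factorise the expectation into $\bigl(\E_{q,q'}(-1)^{qq'}\bigr)^3=\tfrac18$. The M\"obius-inversion derivation of the derivative identity is a nice but minor stylistic variant of the paper's direct expansion.

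The $u^4$ bound is where the proposal has a genuine gap, and it occurs at exactly the step you flag. After the (correct) symmetrisation reduction to $\sum_w\binom{n}{w}2^{-n}\omega_8^{jw}\psi(w)$ and the expansion $\psi(w)=\sum_u a_u\binom{w}{u}/\binom{n}{u}$, the coefficients $a_u$ are alternating sums of $(-2)^{|S|}$ over all subsets $S$ of the degree-$3$ support and admit no useful pointwise or $\ell^2$ control; the only a priori information is $|\psi|\leq 1$. That alone cannot yield a nontrivial bound: taking $\psi(w)=\cos(2\pi jw/8)$ makes $\sum_w\binom{n}{w}2^{-n}\omega_8^{jw}\psi(w)=\tfrac12+o(1)$, so a bounded $\psi$ can produce correlation of order $1$. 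Any successful argument must therefore exploit that $\psi$ arises from a cubic $Q$. The sketched Hahn-polynomial $L^2$ estimate cannot supply this (Cauchy--Schwarz against $g$ in $L^2$ of the binomial measure still only gives $O(1)$), and no mechanism is proposed that converts the combinatorial structure of the monomial support of $Q$ into the needed decay in $u$.

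The paper's proof (following Alon--Beigel) supplies exactly that mechanism, via Ramsey theory. One first proves the base case where $Q$ is a linear combination of $S_0,\ldots,S_3$: then both $Q$ and $S_4$ depend only on $|x|\bmod 8$, and multisection of $\sum_w\binom{n}{w}z^w$ shows $|x|\bmod 8$ equidistributes, giving the bound $O(2^{-\Omega(n)})$. For general cubic $Q$, one regards the degree-$3$ monomial support as a $3$-uniform hypergraph, applies (hypergraph) Ramsey to find $I\subseteq[n]$ of size $m=\Omega(\log\log n)$ on which the degree-$3$ part restricts to $S_3$ or to $0$, freezes the coordinates outside $I$, and recurses on the lower degree parts; the paper also runs the graph and hypergraph Ramsey appeals simultaneously to save a logarithm, yielding $\|f\|_{u^4}\ll(\log n)^{-c}$. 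This Ramsey reduction, which makes $Q$ symmetric after restriction and so returns the problem to the base case, is the ingredient your purely analytic approach is missing.
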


This counterexample was discovered independently by Lovett, Meshulam and Samorodnitsky \cite{lms}. They obtain a very much stronger bound for the lack of correlation of $f$ with a cubic phase, namely $\Vert f \Vert_{u^4} \ll 2^{-cn}$. We obtain our bound by a very slight modification of Ramsey-theoretic arguments of Alon and Beigel \cite{beigel}.  We will in fact be able to establish similar results with $S_4$ replaced by $S_{2^j}$ for $j \geq 2$; see Theorem \ref{sdgow}. The aforementioned paper of Lovett, Meshulam and Samorodnitsky goes further in establishing counterexamples to Conjecture \ref{ig} for all prime fields $\F = \F_p$; specifically, the conjecture fails when $d +1 = p^2$.

We note that the counterexample presented in Theorem \ref{main-1} is also a counterexample to the specific case of Conjecture \ref{ig} given as \cite[Conjecture 21]{bv}.

It seems of interest to determine for what other degrees, Gowers norms, and characteristics one has a counterexample of the above type, and to ask what can be salvaged when $\F$ is very small. We will speculate on these questions in \S \ref{gen-sec}. We do not regard Theorem \ref{main-1} as an obstacle to the possible truth of the inverse conjecture over $\Z/N\Z$ on which our programme to count solutions to linear equations in primes depends (cf. \cite{green-tao-linearprimes}). Indeed this seems to be a ``low characteristic'' issue, albeit one of a rather interesting nature.

We turn now to a discussion of the main technical result of the paper, on which the proof of Theorem \ref{main-2} depends. We begin by defining the notion of \emph{rank}.

\begin{definition}[Rank]\label{rank-def}  Let $d \geq 0$, and let $P: \F^n \to \F$ be a function.  We define the \emph{degree $d$ rank} $\rank_d(P)$ of $P$ to be the least integer $k \geq 0$ for which there exist polynomials $Q_1,\ldots,Q_k \in \mathcal{P}_d(\F^n)$ and a function $B: \F^k \to \F$ such that we have the representation $P = B(Q_1,\ldots,Q_k)$.  If no such $k$ exists, we declare $\rank_d(P)$ to be infinite (since $\F^n$ is finite-dimensional, this only occurs when $d=0$ and $P$ is non-constant).
\end{definition}

In the low-degree case, it is well known that the bias $\E_{x \in \F^n} e_\F(P(x))$ of a polynomial phase $e_\F(P(x))$ is closely related to the rank of $P$.  For instance, if $P \in \mathcal{P}_1(\F^n)$ is linear, then from simple Fourier analysis we see that $\E_{x \in \F^n} e_\F(P(x))$ has magnitude $1$ if $\rank_0(P) = 0$ and magnitude $0$ otherwise.  For quadratic polynomials, we have the following well-known fact:

\begin{lemma}[Gauss sum estimate]\label{gauss}  If $P \in \mathcal{P}_2(\F^n)$, then
$$ |\E_{x \in \F^n} e_\F(P(x))| \ll |\F|^{-c\rank_1(P)}$$
where $c > 0$ is an absolute constant.
\end{lemma}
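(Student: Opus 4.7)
The plan is to apply the standard Weyl differencing (squaring) trick. Squaring and changing variables yields
\[
|\E_{x \in \F^n} e_\F(P(x))|^2 = \E_{h, x \in \F^n} e_\F(D_h P(x)).
\]
Since $P$ has degree $2$, for each fixed $h$ the derivative $D_h P$ is affine-linear in $x$; writing $D_h P(x) = \beta(h) \cdot x + \gamma(h)$, the map $\beta \colon \F^n \to \F^n$ is itself linear in $h$ (essentially the linear map associated to the symmetric-or-alternating bilinear form $B$ coming from the quadratic part of $P$), and the inner expectation in $x$ vanishes unless $\beta(h) = 0$. Hence
\[
|\E_x e_\F(P(x))|^2 \leq \Pr_h[\beta(h) = 0] = |\F|^{-\rank(\beta)} = |\F|^{-\rank(B)}.
\]

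The second step is to show $\rank(B) \geq \rank_1(P) - 1$, by exhibiting $P$ as a function of at most $\rank(B) + 1$ linear forms. In odd characteristic $B$ is symmetric and can be diagonalised: if $r := \rank(B)$, then the quadratic part of $P$ becomes a linear combination of $r$ squares of independent linear forms, so $\rank_1(P) \leq r + 1$ after absorbing the affine-linear part of $P$ into a single additional slot. In characteristic $2$, $B$ is alternating, so $r$ is even and $B$ admits a symplectic normal form on a complement $V'$ to its radical. Using the identity $P_0(x+y) + P_0(x) + P_0(y) = B(x,y)$ for the quadratic part $P_0$, the restriction $P_0|_{V'}$ is a function of $r$ linear forms, while the restriction of $P_0$ to the radical of $B$ is additive and hence itself a single linear form. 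So $P_0$ is expressible using $r+1$ linear forms, and the affine-linear part of $P$ can be folded into the same extra slot, giving $\rank_1(P) \leq r + 1$ again.

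Combining the two steps yields $|\E_x e_\F(P(x))|^2 \leq |\F|^{-(\rank_1(P) - 1)}$, which proves the lemma with $c = 1/2$ after absorbing the factor $|\F|^{1/2}$ into the implicit constant.

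The main obstacle is the characteristic-$2$ analysis, where the correspondence between quadratic polynomials and alternating bilinear forms is subtle: the polynomial $P_0$ need not descend to the quotient by the radical of $B$. One has to check carefully that the residual ``restriction to the radical'' piece, although possibly nonzero, is forced by the identity above to be additive and so contributes only one additional linear form to the rank.
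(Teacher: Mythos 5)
Your proof is correct and follows essentially the same route as the paper: square, carry out the Fourier average over $x$ to reduce to the probability that the associated symmetric/alternating bilinear form is degenerate at $h$, and then show that the rank of that form controls $\rank_1(P)$, treating even characteristic separately. The only genuine variation is the characteristic-$2$ device: the paper writes the alternating form $\tilde B$ as $C(x,h)+C(h,x)$ for a strictly lower-triangular $C$ and checks that $B(x,x)-C(x,x)$ is linear, whereas you split $\F^n$ as (radical of $\tilde B$) $\oplus$ (complement) and observe that the quadratic part, restricted to the radical, is additive and hence a single linear form. Both resolve the same subtlety --- namely that in characteristic $2$ the quadratic form does not descend to the quotient by the radical --- but your version is slightly more structural and yields the clean direct bound $|\E_x e_\F(P)|^2 \leq |\F|^{1-\rank_1(P)}$ (hence $c=1/2$) rather than arguing via the contrapositive $\rank_1(P) \ll \log_{|\F|}(1/\delta)$.
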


\proof   If $P \in \mathcal{P}_1(\F^n)$ then the claim can be verified by Fourier analysis, so we can assume that $P \not \in \mathcal{P}_1(\F^n)$.  We begin with the easy case $|\F| > 2$, and then discuss the changes needed to handle $|\F| = 2$.

Suppose that 
\begin{equation}\label{evpd}
|\E_{x \in \F^n} e_\F(P(x))| \geq \delta
\end{equation}
for some $0 < \delta < 1/2$.  It will suffice to show that $\rank_1(P) \ll \log_{|\F|} \frac{1}{\delta}$. 

Squaring \eqref{evpd}, we conclude that
$$ \delta^2 \leq \E_{x,y \in \F^n} e_\F(P(x)-P(y)) = \E_{x,h \in \F^n}  e_\F(D_h P(x)).$$
From Fourier analysis, we see that the average $\E_{x \in \F^n} e_\F(D_h P(x))$ vanishes unless $D_h P \in \mathcal{P}_0(\F^n)$, in which case it has magnitude $1$.  Thus the assumption \eqref{evpd} implies that
$$ \P_{h \in \F^n}( D_h P \in \mathcal{P}_0(\F^n) ) \geq \delta^2.$$
Now by breaking up $P$ into monomials, we can express $P(x) = B(x,x) + L(x)$ for some bilinear form $B: \F^n \times \F^n \to \F$ and some $L \in \mathcal{P}_1(\F^n)$.  In the odd characteristic case $|\F| > 2$, we can take $B$ to be symmetric.  We conclude that
$$ D_h P(x) = 2 B(x,h) \md{\mathcal{P}_0(\F^n)},$$
and hence that
$$ \P_{h \in \F^n}( B(x,h) = 0 \hbox{ for all } x \in \F^n ) \geq \delta^2.$$
If $\delta^2 > 1/|\F|$ then this forces $B$ to vanish identically, which contradicts the hypothesis $P \not \in \mathcal{P}_1(\F^n)$, so we may assume $\delta^2 \leq 1/|\F|$.  Then the linear transformation associated to $B$ has rank at most $O( \log_{|\F|} 1/\delta )$; since $P(x) = B(x,x) + L(x)$, we conclude $\rank_1(P) \ll \log_{|\F|} 1/\delta$ as desired.

Now we consider the even characteristic case $|\F|=2$, in which case we cannot take $B$ to be symmetric.  Then the above argument gives 
$$ \P_{h \in \F^n}( \tilde B(x,h) = 0 \hbox{ for all } x \in \F^n ) \geq \delta^2.$$
where $\tilde B(x,h) := B(x,h) + B(h,x)$ is a symmetric bilinear form.   Thus $\tilde B$ must have rank $O( \log_2 1/\delta )$.   By linear algebra we can thus express
$$ \tilde B(x,h) = \sum_{1 \leq i,j \leq k} c_{i,j} L_i(x) L_j(h)$$
for some $k \ll \log_2 1/\delta$, some linearly independent linear functionals $L_i: \F^n \to \F$, and some coefficients $c_{i,j} \in \F$.  Since $\tilde B$ is symmetric and the $L_i$ are independent, we have $c_{i,j} = c_{j,i}$.  Since $\tilde B(x,x) = B(x,x) + B(x,x)$ vanishes in characteristic $2$, we also see that $c_{i,i}=0$.  We can thus write
$$ \tilde B(x,h) = C(x,h) + C(h,x)$$
where $C(x,h) := \sum_{1 \leq i < j \leq k} c_{i,j} L_i(x) L_j(h)$ is the lower-triangular component of $\tilde B(x,h)$.  We then easily verify that $B(x,x)-C(x,x)$ is a linear function of $x$, and so $P(x)$ can be expressed as the sum of $C(x,x)$ and a linear function, from which the claim $\rank_1(\P) \ll \log_2 1/\delta$ follows.
\endproof

We shall establish the following generalisation of the above estimate to higher degree polynomials, provided that the degree does not exceed the characteristic:

\begin{theorem}[Lack of equidistribution implies bounded rank]\label{wtn}  Suppose that an integer $d$ satisfies $0 \leq d < |\F|$. Let $\delta \in (0,1]$, and suppose that $P \in \mathcal{P}_d(\F^n)$ is such that $|\E_{x \in \F^n} e_\F(P(x))| \geq \delta.$
Then $\rank_{d-1}(\P) \ll_{\F, \delta, d} 1.$
\end{theorem}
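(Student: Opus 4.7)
The cases $d=0,1,2$ are already handled (by elementary Fourier analysis and Lemma~\ref{gauss}), so fix $d\geq 3$. The hypothesis $d<|\F|$ ensures that $P$ has a well-defined top-order symmetric $d$-linear form $T$ on $(\F^n)^d$, characterised by
\[
 D_{h_1}\cdots D_{h_d} P(x) \;=\; d!\,T(h_1,\ldots,h_d)
\]
(this is independent of $x$ because $\deg P = d$), and $P$ admits a Taylor-type decomposition $P(x) = T(x,x,\ldots,x) + R(x)$ with $R\in\mathcal{P}_{d-1}(\F^n)$. Iterating the squaring identity $|\E_x e_\F(P(x))|^2 = \E_{h,x} e_\F(D_h P(x))$ a total of $d$ times turns the bias hypothesis into
\[
 \delta^{2^d} \;\leq\; \bigl|\E_{h_1,\ldots,h_d \in \F^n}\, e_\F\bigl(d!\,T(h_1,\ldots,h_d)\bigr)\bigr|.
\]

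The central task is a multilinear analogue of Lemma~\ref{gauss}: for any symmetric $k$-linear form $T$ on $(\F^n)^k$ satisfying $|\E e_\F(T)|\geq \eta$, there exist $r=r(\F,\eta,k)\ll 1$ linear forms $L_1,\ldots,L_r:\F^n\to\F$ such that $T(h_1,\ldots,h_k)$ is determined by the values $\{L_i(h_j)\}_{1\leq i\leq r,\ 1\leq j\leq k}$. I would establish this by a secondary induction on $k$. The base $k=2$ follows from Lemma~\ref{gauss} applied to the symmetric bilinear form attached to $T$ (using the characteristic-$2$ adjustment given there when $|\F|=2$). For $k\geq 3$, freezing the last variable and applying Cauchy--Schwarz in the remaining ones shows that for a positive-density set of $h_k$, the restriction $T(\cdot,h_k)$ is a biased symmetric $(k-1)$-linear form; the inductive hypothesis then supplies per-$h_k$ linear decompositions, and a pigeonhole/regularisation step merges these into a single global system $L_1,\ldots,L_r$ that captures $T$ on all slots (using symmetry of $T$ to transfer the decomposition across slots).

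Once the sublemma is applied to our $T$, the diagonal $T(x,\ldots,x)$ is a function of $L_1(x),\ldots,L_r(x)$, each of which is linear and hence lies in $\mathcal{P}_{d-1}(\F^n)$. Together with the residual $R\in\mathcal{P}_{d-1}(\F^n)$, which contributes one further polynomial, this yields $\rank_{d-1}(P)\leq r+1 \ll_{\F,\delta,d} 1$. The principal obstacle will be the merging step in the multilinear sublemma: the per-$h_k$ linear forms produced by induction vary with $h_k$, and bundling them into a uniform description of $T$ is delicate. This is also exactly where the hypothesis $d<|\F|$ is used substantively, through the symmetry of $T$ and the invertibility of $d!$ in $\F$; without it, neither the Taylor decomposition nor the clean passage from $e_\F(d!\,T)$ to $e_\F(T)$ is available, consonant with the failure of the analogous inverse statement in low characteristic illustrated by Theorem~\ref{main-1}.
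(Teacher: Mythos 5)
Your first step — iterating Cauchy–Schwarz to pass from bias of $P$ to bias of the $d$-linear form $T$, and using the $d<|\F|$ hypothesis for the Taylor expansion — is sound and is indeed how the paper exploits the characteristic hypothesis (see the Taylor-expansion step in Proposition~\ref{gowers-1} and inside Proposition~\ref{parallel-count}). However, your ``multilinear Gauss sum'' sublemma is \emph{false} as stated, and this is a fatal gap. Take $|\F|$ odd and $d=4$, let $b$ be a symmetric bilinear form on $\F^n$ of full rank, and let
\[
T(h_1,h_2,h_3,h_4) := b(h_1,h_2)b(h_3,h_4)+b(h_1,h_3)b(h_2,h_4)+b(h_1,h_4)b(h_2,h_3).
\]
Since the sextuple $(b(h_i,h_j))_{i<j}$ equidistributes in $\F^6$, the average $\E\,e_\F(T)$ factors as $(\E_{a,b\in\F}e_\F(ab))^3=|\F|^{-3}+o(1)$, which is bounded below; yet $T$ is \emph{not} a function of boundedly many linear forms evaluated at the slots. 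Indeed, restricting to $h_1=h_2=u$, $h_3=h_4=v$ with $v$ generic, such a decomposition would force $b(u,u)$ to be a function of boundedly many linear forms in $u$, contradicting the high rank of $b$. The correct conclusion for such $T$ is that it is a function of boundedly many \emph{lower-degree multilinear forms} (here $b$ itself), i.e.\ $\rank_{k-1}(T)\ll1$ — but that is exactly Theorem~\ref{wtn} applied to the degree-$d$ polynomial $T$ on $(\F^n)^{d}$, so using it is circular.

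Even with the weakened, correct sublemma, the freezing-and-merging step you flag as ``delicate'' is where the actual difficulty lives, and a pigeonhole is not enough: the per-$h_k$ decompositions live in an exponentially large space and there is no a priori reason a single bounded system should serve a positive-density set of $h_k$, let alone all of them. The paper circumvents precisely this by working with an \emph{approximate} representation first — Lemma~\ref{bog-viola-lem} shows bias gives a function of bounded $\rank_{d-1}$ agreeing with $P$ on $1-\sigma$ of the space — and then promoting $99\%$-agreement to $100\%$-agreement (Proposition~\ref{99-to-100}) via the regularity lemma (Lemma~\ref{regular}) and a parallelepiped-counting argument (Proposition~\ref{parallel-count}) that invokes the inductive hypothesis at lower degree. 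You would need some comparable machinery to make your multilinear route close; as written, it both asserts something untrue and leaves the genuine obstruction unresolved.
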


The proof of this theorem is the technical heart of the paper, and will be accomplished in \S \ref{proof-sec}. It is possible that the restriction on $|\F|$ can be removed, but our method of proof breaks down when $d \geq |\F|$. Certainly the deduction of Theorem \ref{main-2} from Theorem \ref{wtn} breaks down in this case (which of course it must, thanks to Theorem \ref{main-1}).

\emph{Acknowledgements.} The authors are indebted to Andrej Bogdanov, Tali Kaufman, and Emanuele Viola for suggesting this problem, and for many useful discussions.  The authors also thank Alex Samorodnitsky for drawing attention to the recent preprint \cite{lms}, and to Peter Sarnak for suggestions.

\section{Factors and regularity}
In this section we give some definitions and results which will be useful in our proof of Theorem \ref{wtn}.

\begin{definition}[Factors and configuration space]\label{factor-def} Suppose that $d \geq 0$ is an integer and that $M_1,\dots,M_d$ are further non-negative integers. By a \emph{factor} of degree $d$ on $\F^n$ we mean a collection $\Factor = (P_{i,j})_{1 \leq i\leq d, 1 \leq j \leq M_i}$ where $P_{i,j} \in \mathcal{P}_i(\F^n)$ for all $i,j$. By the \emph{dimension} $\dim(\Factor)$ of $\Factor$ we mean the quantity $M_1 + \dots + M_d$. Write $\Factor_i$ for the $i$-degree part of $\Factor$, that is to say the collection $(P_{i,j})_{1 \leq j \leq M_i}$. Although we are using the term factor to describe nothing more complicated than a collection of polynomials, we encourage the reader to think in addition of the $\sigma$-algebra $\sigma(\Factor)$ defined by these polynomials $P_{i,j}$, that is to say the partition of $\F^n$ into \emph{atoms} of the form $\{x : P_{i,j}(x) = c_{i,j}\}$. 
We write $\Sigma = \F^{M_1} \times \dots \F^{M_d}$ and call this the \emph{configuration space} of $\Factor$. We write $\Phi : \F^n \rightarrow \Sigma$ for the \emph{evaluation map} given by $\Phi(x) = (P_{i,j}(x))_{1 \leq i \leq d, 1 \leq j \leq M_i}$.
\end{definition}

We will use the notation of this definition throughout the paper without further comment. Sometimes we will have factors $\Factor,\Factor'$ and $\Factor''$; we will write $P_{i,j},P'_{i,j},P''_{i,j}$, $\Sigma, \Sigma', \Sigma''$, $M_j, M'_j, M''_j$, $\Phi, \Phi', \Phi''$ and so on for the corresponding polynomials, configuration spaces, dimensions and evaluation maps.

We will frequently need to \emph{extend} a factor into a more \emph{regular} one, by expressing the complicated polynomials in a factor by simpler ones.  Our notation for this concept is as follows. We say that a factor $\Factor'$ is an \emph{extension} of $\Factor$ if $\sigma(\Factor')$ is a (possibly trivial) refinement of $\sigma(\Factor)$. Note that this is \emph{not} the same thing as saying that the collection $(P'_{i,j})$ defining $\Factor'$ contains the collection $(P_{i,j})$ defining $\Factor$. For example, the factor defined by the linear polynomials $x_1,x_2,x_3$ is a refinement of that defined by the polynomials $x_1,x_2$ and $x_1 + x_2$.

By a \emph{growth function of order $d$} we mean a non-decreasing function $F: \Z^+ \to \R^+$.  

\begin{definition}[$F$-regularity]  Let $\Factor$ be a factor of order $d$, and let $F$ be a growth function.  We say that $\Factor$ is \emph{$F$-regular} if we have
$$ \rank_{i-1}(\sum_{j=1}^{M_i} c_{i,j} P_{i,j}) \geq F(\dim(\Factor))$$
for all $1 \leq i \leq d$ and all coefficients $c_{i,1},\ldots,c_{i,M_i} \in \F$ that are not all zero.  (In particular, if $F$ is positive, this implies that the polynomials $P_{i,1},\ldots,P_{i,M_i}$ are linearly independent.)
\end{definition}

\begin{example} If $d$, $F$ and $M_1,\ldots,M_d$ are fixed, and $P_{i,j}$ are chosen uniformly at random from $\mathcal{P}_i(\F^n)$, then the resulting factor $\Factor$ will be $F$-regular with probability $1-o(1)$, where $o(1)$ goes to zero as $n \to \infty$ for fixed $d,F,M_1,\ldots,M_d$.  Indeed, one should view the polynomials in an $F$-regular factor as ``behaving like'' generic polynomials, in that they obey no unexpected algebraic constraints of bounded complexity.
\end{example}

The following lemma, which allows us to replace take an arbitrary factor $\Factor$ and find a highly regular extension of it, is absolutely fundamental to our arguments. This generalises \cite[Lemma 8.7]{green-tao-finfieldAP4s} to the case of factors of degree 3 or more. The result is faintly analagous in some ways to \emph{Szemer\'edi's regularity lemma} for graphs and to more recent versions of this for hypergraphs.

\begin{lemma}[Regularity lemma]\label{regular} Let $d \geq 1$, let $F$ be a growth function, and let $\Factor$ be a factor of degree $d$.  Then there exists an $F$-regular extension $\Factor'$ of $\Factor$ of degree $d$ satisfying the dimension bound
$$ \dim(\Factor') \ll_{F,d,\dim(\Factor)} 1.$$
\end{lemma}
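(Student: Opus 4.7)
My plan is to prove the regularity lemma by an iterative replacement procedure controlled by a well-founded lexicographic monovariant on the dimension vector.

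Starting from $\Factor^{(0)} := \Factor$, at each stage I check whether $\Factor^{(t)}$ is $F$-regular. If it is, I halt. Otherwise, by the failure of regularity there exist some degree $1 \leq i \leq d$ and coefficients $c_{i,j} \in \F$, not all zero, such that $P := \sum_j c_{i,j} P_{i,j}^{(t)}$ satisfies $\rank_{i-1}(P) =: r < F(\dim(\Factor^{(t)}))$. By Definition~\ref{rank-def} I can write $P = B(Q_1,\ldots,Q_r)$ with each $Q_\ell \in \mathcal{P}_{i-1}(\F^n)$. Choosing any $j_0$ with $c_{i,j_0} \ne 0$, the identity
\[ P_{i,j_0}^{(t)} = c_{i,j_0}^{-1}\Bigl( B(Q_1,\ldots,Q_r) - \sum_{j \ne j_0} c_{i,j} P_{i,j}^{(t)} \Bigr) \]
expresses $P_{i,j_0}^{(t)}$ as a function of the other degree-$i$ polynomials in $\Factor^{(t)}$ together with the $Q_\ell$. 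I form $\Factor^{(t+1)}$ by deleting $P_{i,j_0}^{(t)}$ from the degree-$i$ slot and inserting each $Q_\ell$ into the slot matching its exact degree (always at most $i-1$). Since $P_{i,j_0}^{(t)}$ is now recovered from the remaining polynomials, $\sigma(\Factor^{(t+1)})$ refines $\sigma(\Factor^{(t)})$, so $\Factor^{(t+1)}$ is an extension of $\Factor^{(t)}$ and, transitively, of $\Factor$.

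Termination is the heart of the matter. Equip $\N^d$ with the lexicographic well-order $\prec$ in which $M_d$ is the most significant coordinate. A single step strictly decreases the vector $(M_d^{(t)},\ldots,M_1^{(t)})$: coordinate $M_i^{(t)}$ drops by $1$, while $M_j^{(t)}$ for $j > i$ is unchanged. Well-foundedness of $\prec$ forces the procedure to halt after finitely many iterations, and the output is $F$-regular by construction. For the quantitative bound, each iteration grows the total dimension by at most $r - 1 < F(\dim(\Factor^{(t)}))$, so the set of dimension vectors reachable from a given starting configuration is bounded, and a well-founded recursion on $\prec$ then yields $\dim(\Factor') \ll_{F,d,\dim(\Factor)} 1$.

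The main subtlety I expect is that modifying a lower degree $i$ inflates $\dim(\Factor)$ enough that the stricter threshold $F(\dim(\Factor^{(t+1)}))$ can break regularity at some higher degree $j > i$, even though the degree-$j$ polynomials are untouched. The lex monovariant absorbs this automatically: if the procedure later revisits degree $j$, it decrements $M_j$, a coordinate strictly more significant than any perturbed in the meantime, so $\prec$ still descends. In particular, by always choosing $i$ to be the largest failing degree at each stage, I sidestep the need for a delicate stage-by-stage bookkeeping and get the lemma from a single monotone iteration.
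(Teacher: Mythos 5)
Your proposal is correct and is essentially the paper's own argument: the paper also replaces a low-rank degree-$i$ combination by fewer than $F(\dim\Factor)$ polynomials of degree at most $i-1$ and controls the process by strong induction on the dimension vector under reverse-lexicographic order (highest degree most significant), which is precisely your well-founded monovariant. The only cosmetic differences are that you phrase it as an iteration with a terminating monovariant rather than as an induction, and you sort each $Q_\ell$ into the slot of its exact degree rather than dumping them all in degree $i-1$; neither changes the substance, and your closing remark about always choosing the largest failing degree $i$ is unnecessary since the lex order descends for any admissible choice of $i$.
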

\remark The actual bound we obtain here, if one worked it out, would have an extremely weak dependence on $F,d$ and $\dim(\Factor)$. Even for quite ``reasonable'' growth functions $F$ one starts to see functions in the Ackerman hierarchy making an appearance. It is our dependence on this lemma and the rather poor bounds that result from its proof that renders Theorem \ref{wtn} essentially ineffective.

\proof   Fix $d$ and $F$.  We shall induct on the dimension vector $(M_1,\dots,M_d)$ of $\Factor$ where, of course, $M_i := \dim(\Factor_i)$.  This dimension vector takes values in $\Z_+^d$, which we shall order in reverse lexicographical ordering, that is to say $(M_1,\ldots,M_d) < (M'_1,\ldots,M'_d)$ if there exists $1 \leq i \leq d$ such that $M_i < M'_i$ and $M_j = M'_j$ for all $i < j \leq d$.  This turns $\Z_+^d$ into a well-ordered set (with the ordinal type $\omega^d$), and so we can perform strong induction on this space.  In other words, we may assume without loss of generality that the claim has already been proven for all smaller dimension vectors.

If $\Factor$ is already $F$-regular, then we are done.  Otherwise, there exists $i \in [d]$ and a non-trivial linear combination $Q_i$ of the $P_{i,1},\ldots,P_{i,M_i}$ such that $\rank_{i-1}(Q_i) < F(\dim(\Factor))$, or in other words $Q_i$ is some combination of fewer than $F(\dim(\Factor))$ polynomials of degree at most $i-1$.  By rewriting $Q_i$ in this fashion, we can find an extension $\Factor''$ of $\Factor$ with dimension vector 
$$(M_1,\ldots,M_{i-1} + \lfloor F(\dim(\Factor)) \rfloor, M_i-1, M_{i+1}, \ldots, M_d)$$
(with some obvious modifications in the easy case $i=1$).  Applying the induction hypothesis to $\Factor''$ we obtain the claim.
\endproof

\section{A lemma of Bogdanov and Viola}

In this section we recall \cite[Lemma 25]{bv}, and provide a proof in the interests of self-containment. This lemma \emph{almost} immediately establishes our main result, Theorem \ref{wtn}, except for the presence of some small errors. Our main task in subsequent sections is to eliminate the errors and turn this near-miss result into a proof of Theorem \ref{wtn}.  

\begin{lemma}[Bogdanov-Viola lemma]\label{bog-viola-lem}
Let $d \geq 0$ be an integer, and let $\delta,\sigma \in (0,1]$ be parameters. Suppose that $P \in \mathcal{P}_d(\F^n)$ is a polynomial of degree $d$ such that \begin{equation}\label{assump}|\E_{x \in \F^n} e_{\F}(P(x))| \geq \delta.\end{equation} Then there exists a function $\tilde P : \F^n \rightarrow \F$ with $\rank_{d-1} (\tilde P) \leq \frac{|\F|^5}{\delta^2\sigma}$ such that $\P_{x \in \F^n}(P(x) \neq \tilde P(x)) \leq \sigma$.
\end{lemma}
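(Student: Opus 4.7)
The plan is to use a probabilistic self-correction (local list-decoding) argument. Write $c := \E_{x \in \F^n} e_\F(P(x))$, so by hypothesis $|c| \geq \delta$. The key identity is that, since $x+h$ is uniform in $\F^n$ as $h$ is,
\[
\E_{h \in \F^n} e_\F(D_h P(x)) \;=\; e_\F(-P(x))\,\E_{h \in \F^n} e_\F(P(x+h)) \;=\; c\,e_\F(-P(x))
\]
for every $x \in \F^n$. Thus $P(x)$ is encoded by the average of $e_\F(D_h P(x))$ over $h$, and the $|\F|$ candidate values $\{c\,e_\F(-a) : a \in \F\}$ all lie on a circle of radius $|c| \geq \delta$, with pairwise separation at least $2|c|\sin(\pi/|\F|) \geq 2\delta/|\F|$. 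So the map $a \mapsto c\,e_\F(-a)$ can be inverted from any approximation with error less than $\delta/|\F|$.

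Next I would replace the exact average by a Monte-Carlo estimator. Set $k := \lceil |\F|^{2}/(\delta^{2}\sigma) \rceil$, which is certainly at most $|\F|^{5}/(\delta^{2}\sigma)$. Draw i.i.d.\ uniform shifts $h_1,\dots,h_k \in \F^n$ and form
\[
S(x) \;:=\; \frac{1}{k}\sum_{i=1}^{k} e_\F(D_{h_i} P(x)).
\]
For each fixed $x$ the summands are unit complex numbers with mean $c\,e_\F(-P(x))$ and variance at most $1$, so Chebyshev yields
\[
\P_{h_1,\dots,h_k}\!\Big(|S(x) - c\,e_\F(-P(x))| > \delta/|\F|\Big) \;\leq\; \frac{|\F|^{2}}{k\delta^{2}} \;\leq\; \sigma.
\]
Averaging this bound over $x$ and swapping the order of integration via Fubini, there is a deterministic choice of $h_1,\dots,h_k$ for which the bad set $B := \{x : |S(x) - c\,e_\F(-P(x))| > \delta/|\F|\}$ has density $\leq \sigma$.

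Fix such shifts and define the decoder
\[
\tilde P(x) \;:=\; \operatorname*{arg\,min}_{a \in \F}\, |S(x) - c\,e_\F(-a)|,
\]
with ties broken arbitrarily. The grid separation established in step one forces $\tilde P(x) = P(x)$ for every $x \notin B$, so $\P_x(\tilde P(x) \neq P(x)) \leq \sigma$. Since $\tilde P$ depends on $x$ only through the $k$-tuple $(D_{h_1} P(x), \dots, D_{h_k} P(x))$ -- the constants $c$ and $\F$ itself being independent of $x$ -- and each $D_{h_i} P$ has degree at most $d-1$, we get $\rank_{d-1}(\tilde P) \leq k \leq |\F|^{5}/(\delta^{2}\sigma)$.

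The proof is essentially routine once the identity in step one is spotted; the only mild bookkeeping is verifying that the decoder genuinely depends on $x$ solely through the degree-$(d{-}1)$ polynomials $D_{h_i} P$ (so the rank bound applies) and tracking the constants in the Chebyshev estimate. There is no real obstacle; this is a Bogdanov--Viola style ``local correction'' that will be leveraged in the sequel to extract, with further work, the error-free conclusion of Theorem \ref{wtn}.
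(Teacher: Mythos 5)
Your proof is correct and follows the same high-level Bogdanov--Viola self-correction strategy as the paper: sample $k$ random derivatives $D_{h_i}P$, show that their values at a point $x$ determine $P(x)$ with high probability via a nearest-codeword decoder, and average over $x$ to fix the shifts deterministically. The one genuine difference is the choice of observable. The paper works with the empirical \emph{distribution} $\mu_{\obs}$ of the sampled values $D_{h_i}P(x)$ as a probability measure on $\F$, establishes the $\ell^1$-separation $\Vert \mu_r - \mu_s\Vert \geq 4\delta/|\F|$ between the translated measures, and then needs a union bound over $t\in\F$ when applying Chebyshev coordinatewise --- which is where the $|\F|^5$ in the rank bound comes from. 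You instead track only the single complex scalar $S(x)=\frac1k\sum_i e_\F(D_{h_i}P(x))$, whose exact expectation $c\,e_\F(-P(x))$ places the $|\F|$ candidate decodings on a circle of radius $|c|\ge\delta$ with chordal spacing $\gtrsim\delta/|\F|$; one application of Chebyshev to this lone random variable then suffices. This removes the union bound and in fact yields the lemma with the stronger bound $\rank_{d-1}(\tilde P)\leq O(|\F|^2/(\delta^2\sigma))$, comfortably inside the stated $|\F|^5/(\delta^2\sigma)$. The remaining steps --- Fubini to fix $h_1,\dots,h_k$, and observing that $\tilde P$ factors through the degree-$(d-1)$ polynomials $D_{h_1}P,\dots,D_{h_k}P$ so that the rank bound applies --- match the paper exactly.
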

\proof We remark that the bound on $\rank_{d-1}(\tilde P)$ is \emph{much} superior to that we will eventually obtain for Theorem \ref{wtn}. This is because the Bogdanov-Viola lemma does not rely on the regularity lemma, Lemma \ref{regular}. In fact this bound could even be improved somewhat, but this is not relevent to our work here. 

For each $r \in \F$, define a measure $\mu_r : \F \rightarrow [0,1]$ by setting
\[ \mu_r(t) = \P_{x \in \F^n}(P(x) = t+r)\] for all $t \in \F$. Then \eqref{assump} implies that $|\sum_{t \in \F} e_{\F}(t)\mu_0(t)| \geq \delta$.
Noting that
\[ \sum_{t \in \F} e_\F(t)\mu_0(t) = e_\F(d) \sum_t e_\F(t) \mu_d(t),\]
we see that
\[ \Vert \mu_0 - \mu_{d}\Vert  := \sum_t |\mu_0(t) - \mu_d(t)|  \geq |1 - e_{\F}(d)| |\sum_t e_\F(t) \mu_0(t)| \geq 4\delta/|\F|
\]
if $d \neq 0$, by dint of the inequality $|1 - e^{2\pi i \theta}| \geq 4|\theta|$ which holds when $|\theta| \leq 1/2$.
By translation invariance we conclude that 
\begin{equation}\label{measure-separation} \Vert \mu_r - \mu_s \Vert \geq 4\delta/|\F|\end{equation} whenever $r \neq s$.

Now fix a value of $x$ and let $h \in \F^n$ be chosen at random. Then 
\[ \P_h(D_hP(x) = t) = \P_h(P(x+h) = t + P(x)) = \mu_{P(x)}(t),\]
that is to say $D_hP(x)$ has the distribution $\mu_{P(x)}$. Now we expect that if a large number $D_{h_1}P(x),\dots,D_{h_k}P(x)$ of points are sampled from this distribution then the observed distribution
\[ \mu_{\obs}(h_1,\dots,h_k;x) := \frac{1}{k}\sum_{i = 1}^k \delta_{D_{h_i}P(x)}\] should approximate $\mu_{P(x)}$. In view of the separation property \eqref{measure-separation}, this ought to give us a good chance of recovering $P(x)$. 

Choose $k \geq \frac{|\F|^5}{2\sigma\delta^2}$, and sample $h_1,\dots,h_k$ independently at random from $\F^n$. Motivated by the above discussion, we define $\tilde P_{h_1,\dots,h_k}(x)$ to be that value of $r \in \F$ for which $\Vert \mu_{\obs}(h_1,\dots,h_k;x) - \mu_r\Vert$ is minimal. Note that $\tilde P_{h_1,\dots,h_k}$ is measurable with respect to the set of functions $D_{h_1}P(x),\dots,D_{h_k}P(x)$, each of which is a polynomial of degree at most $d-1$. Thus
\[ \rank_{d-1} (\tilde P_{h_1,\dots,h_k}) \leq k.\]
It remains to show that, at least for some choice of $h_1,\dots,h_k$, the function $\tilde P_{h_1,\dots,h_k}$ approximates $P$. Now if $\tilde P_{h_1,\dots,h_k}(x) \neq P(x)$ then it follows from the separation property \eqref{measure-separation} that 
\[ \Vert \mu_{\obs}(h_1,\dots,h_k,x) - \mu_{P(x)}\Vert \geq 2\delta/|\F|.\]
We claim that for fixed $x$ the probability of this happening (over random choices of $h_1,\dots,h_k$) is at most $\sigma$. Summing over $x$, it then follows that there is at least one choice of $h_1,\dots,h_k$ for which $\# \{x : P(x) \neq \tilde P_{h_1,\dots,h_k}(x)\} \leq \sigma |\F^n|$, and the lemma follows upon taking $\tilde P := \tilde P_{h_1,\dots,h_k}$.

Fix $x \in \F^n$ and a value of $t \in \F$, and write $Y_i = 1_{D_{h_i}P(x) = t}$. To establish the claim, it suffices to show that
\[ \P\big( |\frac{Y_1 + \dots + Y_k}{k} - \mu_{P(x)}(t)| \geq \frac{2\delta}{|\F|} \big) \leq \frac{\sigma}{|\F|}.\] 
Noting that the $Y_i$ are i.i.d. Bernouilli random variables with means $\overline{Y} = \mu_{P(x)}(t)$, this follows from a suitable version of the law of large numbers. In this case we may use the inequality
\[ \P\big(| \frac{Y_1 + \dots + Y_k}{k} - \overline{Y} | \geq \eta \big) \leq \frac{1}{4k\eta^2},\] which follows from Chebyshev's inequality.\endproof

\remark When $|\F| = 2$, the above proof has a pleasant interpretation. The value of $\tilde P_{h_1,\dots,h_k}(x)$ is then obtained by ``majority vote'' amongst the values of $D_{h_i}P(x)$.

\section{Counting lemmas}

We shall prove Theorem \ref{wtn} by induction.  Accordingly, we begin by first describing some \emph{consequences} of Theorem \ref{wtn} at a given order $d$, which are already of some independent interest.  These consequences complement the regularity lemma in much the same way that ``counting lemmas'' in graph theory complement the Szemer\'edi regularity lemma. 

\begin{lemma}[Size of atoms]\label{atom-size} Let $d \geq 1$, and $\eps > 0$.  Suppose that Theorem \ref{wtn} is true for orders up to $d$.  Then there exists a growth function $F$ \textup{(}depending on $d$ and $\eps$\textup{)} such that if $\mathcal{F}$ is an $F$-regular factor of order $d$ on $\F^n$ then we have the estimate
\begin{equation}\label{pv}
\P_{x \in \F^n}(\Phi(x) = t ) = (1 + O(\eps)) \frac{1}{|\Sigma|}
\end{equation}
for all configurations $t \in \Sigma$. In words, all the atoms in the $\sigma$-algebra $\sigma(\Factor)$ have roughly the same size.
\end{lemma}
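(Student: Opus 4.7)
The plan is to expand the indicator function $1_{\Phi(x)=t}$ into Fourier characters and show that all non-trivial character sums are small. Concretely, for each $t = (t_{i,j}) \in \Sigma$ one writes
$$1_{\Phi(x) = t} = \prod_{i,j} \frac{1}{|\F|} \sum_{c_{i,j} \in \F} e_\F\bigl(c_{i,j}(P_{i,j}(x) - t_{i,j})\bigr),$$
and after averaging in $x$ one obtains
$$\P_{x \in \F^n}(\Phi(x) = t) = \frac{1}{|\Sigma|} \sum_{c \in \Sigma} e_\F(-c \cdot t)\, \E_{x \in \F^n} e_\F(P_c(x)),$$
where $P_c := \sum_{i,j} c_{i,j} P_{i,j}$ and $c \cdot t := \sum_{i,j} c_{i,j} t_{i,j}$. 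The $c=0$ term contributes the main term $1/|\Sigma|$, so the task reduces to showing $\sum_{c \neq 0} |\E_x e_\F(P_c(x))| = O(\eps)$.

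For each nonzero $c$, let $k = k(c)$ be the largest index with some $c_{k,j} \neq 0$. Then $P_c = R_k + L$ where $R_k := \sum_j c_{k,j} P_{k,j} \in \mathcal{P}_k(\F^n)$ is the top part and $L := \sum_{i<k,j} c_{i,j} P_{i,j} \in \mathcal{P}_{k-1}(\F^n)$ is a combination of strictly lower-degree $P_{i,j}$'s. By $F$-regularity, $\rank_{k-1}(R_k) \geq F(\dim(\Factor))$; in particular $R_k$ genuinely has degree $k$ (provided $F > 0$), so $P_c$ has degree exactly $k$. Moreover, since $L$ is visibly a function of the at most $\dim(\Factor)$ polynomials $P_{i,j}$ with $i < k$, a straightforward manipulation of Definition~\ref{rank-def} yields
$$\rank_{k-1}(P_c) \geq \rank_{k-1}(R_k) - \dim(\Factor) \geq F(\dim(\Factor)) - \dim(\Factor).$$

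Now I apply Theorem~\ref{wtn} (which is assumed at orders up to $d$, and we have $k \leq d$): if $|\E_x e_\F(P_c(x))| \geq \eta$, then $\rank_{k-1}(P_c) \leq C(\F,k,\eta)$ for some explicit function $C$. So choosing a growth function $F$ satisfying
$$F(m) - m > \max_{1 \leq k \leq d} C\!\left(\F, k, \tfrac{\eps}{|\F|^m}\right) \quad \text{for every } m \geq 0,$$
forces $|\E_x e_\F(P_c(x))| \leq \eps/|\F|^{\dim(\Factor)} = \eps/|\Sigma|$ for every nonzero $c$. Summing the $|\Sigma|-1$ off-diagonal terms in the Fourier expansion gives
$$\left|\P_x(\Phi(x) = t) - \frac{1}{|\Sigma|}\right| \leq \frac{1}{|\Sigma|} \sum_{c \neq 0} \frac{\eps}{|\Sigma|} \leq \frac{\eps}{|\Sigma|},$$
which is exactly \eqref{pv}. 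The only real obstacle is calibrating $F$ correctly: since the trivial bound on the number of non-zero characters is $|\Sigma| = |\F|^{\dim(\Factor)}$, the target $\eta$ must be taken exponentially small in $\dim(\Factor)$, and hence $F$ must grow fast enough (relative to the quantitative dependence of Theorem~\ref{wtn} on $\eta$) to compensate. This dependence of $F$ on the dependencies in Theorem~\ref{wtn} is what makes the effective bounds here quite poor, as the remark after Lemma~\ref{regular} anticipated.
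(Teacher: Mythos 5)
Your proof is correct and follows essentially the same route as the paper's: a Fourier expansion of the indicator $1_{\Phi(x)=t}$ over $\Sigma$, reduction to bounding the non-trivial character sums, using $F$-regularity to force the relevant linear combination to have high degree-$(k-1)$ rank, and then invoking Theorem~\ref{wtn} with a threshold $\eta$ that is exponentially small in $\dim(\Factor)$. The only cosmetic difference is that the paper observes $L$ is itself a single polynomial in $\mathcal{P}_{k-1}$ and so only loses $1$ in the rank inequality (giving $\rank_{k-1}(P_c)\geq F(\dim\Factor)-1$), whereas you lose $\dim(\Factor)$ by counting the individual $P_{i,j}$ making up $L$; since $F$ is a free growth function this changes nothing.
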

\remark Recall that $\Sigma = \F^{M_1} \times \dots \times \F^{M_d}$ is the configuration space associaed to the factor $\Factor$, and that $\Phi : \F^n \rightarrow \Sigma$ is the evaluation map.

\proof We may expand the condition $\Phi(x) = t$ using Fourier analysis on $\Sigma$ to obtain
\[ \P_x(\Phi(x) = t) = \frac{1}{|\Sigma|} \sum_{r \in \Sigma} \E_{x \in \F^n} e_{\F}( r \cdot (\Phi(x) - t)).\]
It therefore suffices to show that
\begin{equation}\label{Evf}
\E_{x \in \F^n} e_\F( \sum_{i=1}^d Q_i ) = O( \eps / |\F|^{\dim(\Sigma)} )
\end{equation}
whenever the $Q_i \in \Span(\Factor_i)$ are not all zero. Let $s \in [d]$ be the largest integer for which $Q_s$ is non-zero.  
As $\Factor$ is $F$-regular, we have $\rank_{s-1}(Q_s) \geq F(\dim(\Factor))$.  On the other hand, $\sum_{i=1}^d Q_i$ differs from $Q_s$ by an element of $\mathcal{P}_{s-1}(V)$.  Thus
$$ \rank_{s-1}(\sum_{i=1}^d Q_i) \geq F(\dim(\Factor)) - 1.$$
If we choose $F$ to sufficiently rapidly growing depending on $\eps$ and $d$, we can thus invoke Theorem \ref{wtn} to obtain \eqref{Evf} as required.  
\endproof

In addition to understanding the distribution of $\Phi(x)$, it turns out to be important to have an understanding of how $k$-dimensional \emph{parallelepipeds} are distributed in configuration space. That is, we study the distribution of $(\Phi(x + \omega \cdot h))_{\omega \in \{0,1\}^k}$ in $\Sigma^{\{0,1\}^k}$, where $h = (h_1,\dots,h_k)$ is a $k$-tuple of elements of $\F^n$. When $k = 2$, for example, we are interested in the $4$-tuple $(\Phi(x),\Phi(x+h_1),\Phi(x+h_2),\Phi(x+h_1 + h_2))$. We prepare the ground for this study with some definitions.

\begin{definition}[Faces and lower faces]
Let $k \geq 1$ be an integer and suppose that $0 \leq k' \leq k$. A subset $F \subseteq \{0,1\}^k$ is called a \emph{face} of dimension $k'$ if it has the form
\[ F = \{\omega \in \{0,1\}^k : \omega_i = \delta_i \quad \mbox{for $i \in I$}\},\]
where $I \subseteq [k]$ has size $k-k'$ and each $\delta_i$ is either 0 or 1. If all of the $\delta_i$ are zero then we say that $F$ is a \emph{lower face}. A lower face of dimension $k'$ can be identified with the power set of $[k]\setminus I$, which is a set of size $k'$. 
\end{definition}

Suppose that we have a parallelepiped $(x + \omega \cdot h)_{\omega \in \{0,1\}^k}$ in $\F^n$, where $h = (h_1,\dots,h_k)$ is a $k$-tuple of elements of $\F^n$. Consider the image
$(\Phi(x + \omega \cdot h))_{\omega \in \{0,1\}^k} \in \Sigma^{\{0,1\}^k}$. This cannot be arbitrary: indeed we have the ``obvious'' constraints coming from the relations
\[ \sum_{\omega \in F} (-1)^{|\omega|} P_{i,j}(x + \omega \cdot h) = 0\]
whenever $F \subseteq \{0,1\}^k$ is a face of dimension at least $i+1$, and $|\omega| := \omega_1+\ldots+\omega_k$.   To model these obvious constraints, we introduce some more notation.

\begin{definition}[Face vectors and parallelepiped constraints]
Suppose that $i_0 \in [d]$, that $j_0 \in [M_{i_0}]$ and that $F \subseteq \{0,1\}^k$. Consider the vector $r(i_0,j_0,F) \in \Sigma^{\{0,1\}^k}$ for which $r_{i,j}(\omega) = (-1)^{|\omega|}$ if $i = i_0$, $j = j_0$ and $\omega \in F$, and is zero otherwise. We call such a vector a \emph{face vector}. If $F$ is a lower face then we speak of a \emph{lower face vector}. If $\dim(F) \geq i_0 + 1$ we say that the face vector (or lower face vector) is \emph{relevant}. We say that $(t(\omega))_{\omega \in \{0,1\}^k} \in \Sigma^{\{0,1\}^k}$ \emph{satisfies the parallelepiped constraints} if it is orthogonal to all the relevant lower face vectors.
\end{definition}

\remarks The motivation for this definition, of course, is that for any $x, h_1,\dots,h_k$ the vector $(\Phi(x + \omega \cdot h))_{\omega \in \{0,1\}^{k}} \in \Sigma^{\{0,1\}^k}$ satisfies the parallelepiped constraints. At first sight the fact that we have restricted attention to \emph{lower} face vectors may look curious. However it turns out (and is not hard to prove) that the set of relevant face vectors in $\Sigma^{\{0,1\}^k}$ is spanned by the relevant lower face vectors. We will not require this fact. 

Write $\Sigma_{\Box} \subseteq \Sigma^{\{0,1\}^k}$ for the subspace of vectors in $\Sigma^{\{0,1\}^k}$ satisfying the parallelepiped constraints.

\begin{lemma}[Dimension of $\Sigma_{\Box}$]\label{dimlem}
Suppose that $k > d$. Then we have
$$\dim(\Sigma_{\Box}) = \sum_{i=1}^d M_i \sum_{0 \leq j \leq i} \binom{k}{j}.$$
\end{lemma}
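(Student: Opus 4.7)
The plan is to decouple the parallelepiped constraints coordinate by coordinate, reduce to a single combinatorial computation, and then count monomial basis elements.

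First, observe that each lower face vector $r(i_0,j_0,F)$ is supported on a single $(i_0,j_0)$-coordinate of $\Sigma^{\{0,1\}^k}$. Therefore the parallelepiped constraints decouple: a tuple $(t_{i,j}(\omega))_{i,j,\omega}$ lies in $\Sigma_\Box$ if and only if for every $(i,j)$ the scalar function $t_{i,j}:\{0,1\}^k \to \F$ lies in the kernel of the constraints of its own type. Consequently $\dim(\Sigma_\Box) = \sum_{i=1}^d M_i \cdot D_i$, where $D_i := \dim V_{i,k}$ and
\[ V_{i,k} := \Bigl\{ f:\{0,1\}^k \to \F \,:\, \sum_{\omega \in F} (-1)^{|\omega|} f(\omega) = 0 \ \text{for every lower face $F$ of dim $\geq i+1$}\Bigr\}. \]
Thus it suffices to prove $D_i = \sum_{j=0}^i \binom{k}{j}$ for each $1 \leq i \leq d$; note this is where $k > d$ is used, as we need $k \geq i+1$ for the constraints even to be non-vacuous.

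Next I would pick a convenient basis for $\F^{\{0,1\}^k}$. Identify $\omega \in \{0,1\}^k$ with the subset $S_\omega \subseteq [k]$ of indices where $\omega_s = 1$, and for each $S \subseteq [k]$ define the monomial $e_S(\omega) := \prod_{s \in S} \omega_s = 1_{S \subseteq S_\omega}$. These $2^k$ functions form a basis of $\F^{\{0,1\}^k}$. A lower face $F$ of dimension $k'$ corresponds to a subset $J \subseteq [k]$ with $|J| = k'$ via $F = \{\omega : S_\omega \subseteq J\}$. I would then compute, for each basis element $e_S$ and each such $J$, the quantity
\[ \sum_{\omega \in F} (-1)^{|\omega|} e_S(\omega) = \sum_{T : S \subseteq T \subseteq J} (-1)^{|T|} = (-1)^{|S|} \sum_{T' \subseteq J \setminus S} (-1)^{|T'|}, \]
which by the standard alternating-sum identity equals $(-1)^{|J|}$ if $J = S$ and $0$ otherwise.

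Hence if we expand $f = \sum_S c_S e_S$, the constraint associated to the lower face indexed by $J$ reads simply $c_J = 0$. The constraints attached to $V_{i,k}$ are those coming from $J$ with $|J| \geq i+1$, so $V_{i,k}$ is exactly the span of $\{ e_S : |S| \leq i\}$, giving
\[ D_i = \#\{ S \subseteq [k] : |S| \leq i \} = \sum_{j=0}^{i} \binom{k}{j}. \]
Summing against $M_i$ yields the claimed formula. The only non-routine ingredient is the alternating-sum computation above; everything else is bookkeeping. I do not anticipate a real obstacle here, only the need to set up notation carefully so that the decoupling step is unambiguous.
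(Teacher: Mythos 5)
Your proof is correct, and it differs from the paper's at the key step. You agree with the paper in first decoupling by $(i,j)$ (disjoint supports of the face vectors), reducing everything to the computation of a single $D_i = \dim V_{i,k}$. At that point the paper counts the relevant lower face vectors ($\sum_{j>i}\binom{k}{j}$ of them for each coordinate), proves their linear independence directly by a lexicographic-maximality argument on the faces, and then obtains $\dim(\Sigma_\Box)$ as the codimension in $\Sigma^{\{0,1\}^k}$. You instead change basis to the multilinear monomials $e_S(\omega)=\prod_{s\in S}\omega_s$ and observe that the lower-face functional indexed by $J$ pairs to $(-1)^{|J|}$ against $e_J$ and to $0$ against every other $e_S$, so the constraints are literally $c_J=0$ for $|J|\geq i+1$ and $V_{i,k}=\Span\{e_S:|S|\leq i\}$. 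The alternating-sum computation you perform is the small lemma that makes this work, and your version has the modest advantage of producing an explicit basis of $\Sigma_\Box$ (not merely its dimension), in the spirit of the multilinear-extension argument (Lemma \ref{ext-lem}) used later in the paper; the paper's independence argument is a touch shorter but yields only the count. Your remark about where $k>d$ enters is fine, though strictly speaking the dimension formula would remain valid even without it since $\binom{k}{j}$ vanishes for $j>k$.
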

\proof Since $\dim(\Sigma^{\{0,1\}^k}) = 2^k(M_1 + \dots + M_d) = \sum_{i=1}^d M_i \sum_j \binom{k}{j}$, it suffices to show that the dimension of the space spanned by the relevant lower face vectors is $\sum_{i=1}^d M_i \sum_{j > i} \binom{k}{j}$. This is precisely the number of different relevant lower face vectors, and so we must only show that the lower face vectors are linearly independent. To do this, we may clearly work with a fixed choice of $i$ and $j$, since the supports of the face vectors $r(i,j,F)$ are disjoint for different pairs $(i,j)$. Suppose there is some linear relation
\[ \sum_F a_F r(i,j,F) = 0.\]
Among all lower faces $F$ for which $a_F \neq 0$, suppose that $F_0$ contains the largest element $\omega_0$ in the lexicographic order on $\{0,1\}^k$. Comparing coefficients of $\omega_0$ we see that $a_{F_0} = 0$, contrary to assumption.\endproof 
 
If the factor $\Factor$ is $F$-regular for some sufficiently rapid growth function $F$, it turns out that the parallelepiped constraints we have written down are the only relevant ones in a rather strong sense. 

\begin{proposition}[Counting parallelepipeds]\label{parallel-count}
Suppose that $|\F|, k > d$, and suppose that Theorem \ref{wtn} is true for orders up to $d$. Let $\eps \in (0,1)$ be a parameter and suppose that $F$ grows sufficiently quickly \textup{(}depending on $k,d$ and $\eps$\textup{)}. Suppose that the factor $\Factor$ has degree at most $d$ and is $F$-regular. Suppose that $t_{\Box} \in \Sigma_{\Box}$, and that $x \in \F^n$ is a point with $\Phi(x) = t_{\Box}(0)$. Then the number of $h \in (\F^n)^k$ such that $\Phi(x + \omega \cdot h) = t_{\Box}(\omega)$ for all $\omega \in \{0,1\}^k$ is $1 + O_k(\eps)$ times $|\F|$ to the power $nk -  \sum_{i=1}^d M_i \sum_{1 \leq j \leq i} \binom{k}{j}$.
\end{proposition}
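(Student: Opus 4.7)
The plan is to proceed by Fourier analysis on the configuration space $\Sigma^{\{0,1\}^k}$, isolating a main term coming from the span $V_{\mathrm{lf}}$ of relevant lower face vectors, and bounding the remaining contributions via Theorem \ref{wtn} (applied inductively at orders $\leq d$) together with the $F$-regularity of $\Factor$.

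Concretely I would first write
$$ N = \frac{1}{|\Sigma|^{2^k-1}} \sum_{r \in \Sigma^{\{0,1\}^k \setminus \{0\}}} e_\F(-r \cdot t_\Box) \sum_{h \in (\F^n)^k} e_\F(R_r(h)), $$
where $R_r(h) := \sum_{\omega \neq 0,\, i,\, j} r_{i,j,\omega} P_{i,j}(x+\omega\cdot h)$, using $\Phi(x) = t_\Box(0)$ to suppress the $\omega=0$ Fourier mode. For $r$ equal to the projection to $\omega \ne 0$ of a relevant lower face vector $r(i_0,j_0,F)$, the sum $\sum_{\omega \in F}(-1)^{|\omega|} P_{i_0,j_0}(x+\omega \cdot h)$ is the iterated additive derivative of $P_{i_0,j_0}$ in $\dim F > \deg P_{i_0,j_0}$ directions, hence $0$; stripping off the $\omega = 0$ contribution leaves $R_r(h) = -P_{i_0,j_0}(x)$, and the hypothesis $t_\Box \in \Sigma_\Box$ then forces $e_\F(-r\cdot t_\Box + R_r(h)) \equiv 1$. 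Summing these contributions and applying the dimension count of Lemma \ref{dimlem} gives the main term exactly as $|\F|^{nk - \sum_i M_i \sum_{1 \leq j \leq i}\binom{k}{j}}$.

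It then remains to bound the contribution of the bad frequencies $r \notin V_{\mathrm{lf}}$; since their number is bounded only in terms of $k$ and $\dim \Factor$, it suffices to show $|\sum_h e_\F(R_r(h))| \ll \eps \cdot |\F|^{nk}$ for each. Since $R_r$ is a polynomial of degree $\leq d$ on $(\F^n)^k \cong \F^{nk}$, Theorem \ref{wtn} at order $d$ reduces this to proving $\rank_{d-1}(R_r) \gg 1$. Let $s \leq d$ be the largest degree at which $r_{s,j,\omega}$ is non-trivial modulo lower face vectors at level $s$. The degree-$s$ piece of $R_r(h)$ is $\sum_{j,\omega} r_{s,j,\omega} \tilde P_{s,j}(\omega \cdot h)$ with $\tilde P_{s,j}$ the top-degree homogeneous part of $P_{s,j}$, and restricting to a diagonal $h_m = e_m y$ specializes this to $\sum_j \lambda_{s,j}(e)\, \tilde P_{s,j}(y)$ with $\lambda_{s,j}(e) = \sum_\omega r_{s,j,\omega} (\omega \cdot e)^s$. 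The hypotheses $|\F| > d \geq s$ and $k > d \geq s$ feed a Vandermonde-type argument guaranteeing an $e \in \F^k$ for which the coefficients $\lambda_{s,j}(e)$ form a non-trivial combination; $F$-regularity at level $s$ then yields large $(s-1)$-rank, and iterating Theorem \ref{wtn} downward in degree (to absorb the lower-degree contributions to $R_r$) closes the bound, provided $F$ is chosen to grow fast enough in $k$, $d$, and $\eps$.

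The central technical point is the last step: translating ``$r \notin V_{\mathrm{lf}}$'' into ``the diagonal restriction of the top-degree part of $R_r$ is a non-trivial combination of the $\tilde P_{s,j}$'', which is precisely where the two hypotheses $|\F| > d$ and $k > d$ enter via Vandermonde. Getting the combinatorics right — in particular, tracking which face-vector identities exhaust the trivial contributions to $R_r$ and which coefficients survive the diagonal restriction — and choosing $F$ so that it beats all the intermediate constants produced by the iterated application of Theorem \ref{wtn} is where the work lies.
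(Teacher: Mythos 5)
Your Fourier expansion, the identification of the main term from the span of relevant lower face vectors, and the treatment of the $\omega = 0$ mode via $\Phi(x) = t_\Box(0)$ all match the paper's proof; where you diverge is in bounding the bad frequencies $r \notin W$, and there your argument is genuinely different. The paper reduces $r$ modulo $W$ so that $r_{i,j,\omega}=0$ unless $|\omega|\leq i$, selects the triple $(i_0,j_0,\omega_0)$ with $i_0$ maximal and $s=|\omega_0|$ maximal, and then applies the Cauchy--Schwarz inequality $i_0$ times (after padding with dummy variables) to strip away the lower-order contributions and land on a Gauss-sum-type expression $\E_{h_1,\dots,h_{i_0}} e_\F(\partial^{i_0}P_{i_0,j_0})$; Theorem~\ref{wtn} then yields $\rank_{i_0-1}(\partial^{i_0}P_{i_0,j_0}) \ll 1$, and the Taylor expansion $P_{i_0,j_0} = \frac{1}{i_0!}\partial^{i_0}P_{i_0,j_0}(x,\dots,x) + (\text{lower})$ (this is where the paper uses $|\F|>d$, for invertibility of $i_0!$) upgrades this to $\rank_{i_0-1}(P_{i_0,j_0}) \ll 1$, contradicting $F$-regularity. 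You instead show the large-rank conclusion directly: after the same reduction, with $s$ the top non-trivial level, you observe that $R_r$ has degree $\leq s$ on $\F^{nk}$, restrict to the diagonal $h_m = e_m y$ so that the degree-$s$ homogeneous part collapses to $\sum_j \lambda_{s,j}(e)\,\tilde P_{s,j}(y)$, and invoke $F$-regularity once a suitable $e$ makes the coefficient vector $(\lambda_{s,j}(e))_j$ non-zero. That last non-vanishing is correct but deserves to be spelled out: expanding $(\omega\cdot e)^s$ multinomially, the coefficient of $e^a$ in $\lambda_{s,j}$ is $\frac{s!}{a!}\sum_{\omega \supseteq \Supp(a)} r_{s,j,\omega}$, so $\lambda_{s,j}\equiv 0$ for all $j$ forces $\sum_{\omega\supseteq S} r_{s,j,\omega}=0$ for every non-empty $S\subseteq[k]$ with $|S|\leq s$; combined with $r_{s,j,\omega}=0$ for $|\omega|>s$ (from the reduction) and M\"obius inversion on the subset lattice, this gives $r_{s,j,\omega}=0$ for all $\omega\neq 0$, contradicting maximality of $s$. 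Here is where you use $|\F|>d$: the multinomial coefficients $\frac{s!}{a!}$ are non-zero in $\F$, and a non-zero polynomial of degree $s<|\F|$ on $\F^k$ has a non-vanishing point. One small repair: you should apply Theorem~\ref{wtn} at order $s$ (the actual degree of $R_r$), not at order $d$, since you have established a lower bound on $\rank_{s-1}(R_r)$; with $s<d$ a bound on $\rank_{d-1}(R_r)$ would not contradict it, and no iteration of Theorem~\ref{wtn} downward in degree is needed. Overall your route — showing $R_r$ has large rank and applying Theorem~\ref{wtn} once in the contrapositive — is arguably cleaner than the paper's Cauchy--Schwarz/Taylor chain, trading the iterated Cauchy--Schwarz and its bookkeeping of which cutoff functions omit which variables for the (more algebraic) M\"obius/Vandermonde computation.
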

\remark Note carefully that we have been able to fix the basepoint $x$; this is important in applications of the proposition.  This is why $j$ now only ranges from $1$ to $i$ rather than from $0$ to $i$ as in Lemma \ref{dimlem}.

\proof Write $\Phi_{\Box}(h)$ for the vector $(\Phi(x + \omega \cdot h))_{\omega \in \{0,1\}^k}$ in $\Sigma^{\{0,1\}^k}$. We seek the number of $h$ for which $\Phi_{\Box}(h) = t_{\Box}$; by harmonic analysis on $\Sigma^{\{0,1\}^k}$ this may be expanded as

\begin{equation}\label{eq1} |\F|^{nk}|\Sigma^{\{0,1\}^k}|^{-1}\sum_{r_{\Box} \in \Sigma^{\{0,1\}^k}}\E_{h \in (\F^n)^k} e_{\F}(r_{\Box} \cdot (\Phi_{\Box}(h) - t_{\Box})).\end{equation}

Now when $r_{\Box}$ lies in the space $W$ spanned by the relevant lower face vectors together with the vectors $r(i,j,0)$ we have $r_{\Box} \cdot (\Phi_{\Box}(h) - t_{\Box}) = 0$, since both $\Phi_{\Box}(h)$ and $t_{\Box}$ satisfy the parallelepiped constraints and $\Phi_{\Box}(h)(0) = t_{\Box}(0)$. Since the lower face vectors are linearly independent the contribution from these $r_{\Box}$ to the sum \eqref{eq1} is $|\F|$ to the power $nk - \sum_{i=1}^d M_i \sum_{1 \leq j \leq i} \binom{k}{j}$. To conclude the argument it certainly suffices to show that the contribution from each $r_{\Box} \notin W$ is small in the sense that 
\begin{equation}\label{eq2}
|\E_{h \in (\F^n)^k} e_{\F}(r_{\Box} \cdot \Phi_{\Box}(h))| \leq \eps |\F|^{-2^k \dim(\Sigma)}.
\end{equation}
Such an exponential sum is unaltered in magnitude if an arbitrary element of $W$ is added to $r_{\Box}$. By repeated operations of this type, directed so as to reduce the largest element in the $\omega$-support of each $(r_{\Box}(\omega))_{i,j}$ in the lexicographic order on $\{0,1\}^k$, we may assume that $(r_{\Box}(\omega))_{i,j} = 0$ unless $|\omega| \leq i$. Since $r_{\Box}$ is not in $W$, there is at least one choice of $i,j$ and at least one $\omega \neq 0$ for which $(r_{\Box}(\omega))_{i,j} \neq 0$. Amongst all such triples $(i,j,\omega)$, choose one with the largest value of $i$, say $i = i_0$. For this value of $i = i_0$ choose $(j_0,\omega_0)$ with $s = |\omega_0|$ maximal, still subject to the condition that $(r_{\Box}(\omega_0))_{i_0,j_0} \neq 0$. Note that $1 \leq s \leq i$. By relabelling the cube $\{0,1\}^k$ we may assume that $\omega_0 = 1^s 0^{k-s}$. By construction, any triple $(i,j,\omega)$ satisfies one of the following properties:

\begin{enumerate}
\item $i > i_0$ and $\omega = 0$;
\item $i = i_0$ and $\omega = \omega_0$;
\item $i = i_0$ and at least one of the coordinates $\omega_l$, $1 \leq l \leq s$, is zero;
\item $i < i_0$.
\end{enumerate}
Since $1 \leq s \leq i \leq  k$ the sum in \eqref{eq2} may then be written as an average (over $h_{s+1},\dots,h_k$) of sums of the form
\[ \E_{h_1,\dots,h_s} e_{\F}(P(x + h_1 + \dots + h_s) + Q(h_1,\dots,h_s)),\] where $P$ is not zero and lies in $\Span(\Factor_i)$, and $Q$ has degree at most $s-1$ as a polynomial in $h_1,\dots,h_s$. Such a sum may be written as
\[ \E_{h_1,\dots,h_s} \b_1(h_2,\dots,h_s) \dots \b_s(h_1,\dots,h_{s-1})e_{\F}(P(x + h_1 + \dots + h_s)),\]
where each $\b$ is a bounded function which does not depend on $h_i$.
By introducing dummy variables we may assume that $s = i$. Applying the Cauchy-Schwarz inequality $i$ times to eliminate the bounded functions $\b$, we see that the sum in \eqref{eq2} may be bounded thus:
\[ |\E_{h \in (\F^k)^n}e_{\F}(r_{\Box} \cdot \Phi_{\Box}(h))| \leq  \big( \E_{h_1,\dots,h_i} e_{\F}(D_{h_1}\dots D_{h_i}P( \cdot )) \big)^{1/2^i}.\]
Note that this derivative is, for fixed $h_1,\dots,h_i$, simply a constant; we write it as $\partial^i P(h_1,\dots,h_i)$. It follows that if \eqref{eq2} is false then
\[ |\E_{h_1,\dots,h_i} e_{\F}(\partial^i P(h_1,\dots,h_i))| \geq (\eps |\F|^{-2^k \dim(\Sigma)})^{2^i}.\]
Applying Theorem \ref{wtn} at degree $i \leq d$ and with $V = (\F^n)^i$ we see that 
\[ \rank_{i-1}(\partial^i P) \ll_{k,\eps,\dim(\Sigma)} 1.\]
Note however that we have the Taylor expansion
\[ P(x) = \frac{1}{i!}\partial^i P(x,\dots,x) + Q(x)\] for some polynomial $Q$ of degree at most $i-1$ (this is the only point in the whole paper where we use the assumption that $|\F| > d \geq i$, in order to ensure invertibility of $i!$). It follows that 
\[ \rank_{i-1}(P) \ll_{k,\eps,\dim(\Sigma)} 1.\] This contradicts the $F$-regularity of the factor $\Factor$ if $F$ is assumed to grow sufficiently rapidly.\endproof

\section{Proof of Theorem \ref{wtn}}\label{proof-sec}

In this section we complete the proof of Theorem \ref{wtn}. Our starting point is the lemma of Bogdanov and Viola, stated as Lemma \ref{bog-viola-lem} in this paper. We urge the reader to recall the statement now. In view of that lemma, it suffices to establish the following proposition.

\begin{proposition}[Polynomials which are almost low-rank are low-rank]\label{99-to-100}
Suppose that $d \geq 1$ is an integer, and that Theorem \ref{wtn} holds for all orders up to $d-1$. Let $\sigma_d > 0$ be a small quantity to be specified later. Suppose that $P \in \mathcal{P}_d(\F^n)$ and that $\Factor$ is an $F$-regular factor of degree $d-1$. for some growth function which grows suitably rapidly in terms of $d$. Suppose that $\tilde P : \F^n \rightarrow \F$ is an $\Factor$-measurable function and that $\P(P(x) = \tilde P(x)) \geq 1 - \sigma_d$. Then $P$ is itself $\Factor$-measurable.
\end{proposition}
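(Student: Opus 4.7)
Fix a representative $g: \Sigma \to \F$ with $\tilde P = g \circ \Phi$, and let $E := \{x \in \F^n : P(x) \neq \tilde P(x)\}$, so $|E| \leq \sigma_d |\F^n|$. The goal is to show that $P$ is constant on every non-empty atom $\Phi^{-1}(t_0)$, which is exactly $\Factor$-measurability.

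Since $\deg P \leq d$, the degree-$d$ polynomial identity rearranges to
$$P(x) = -\sum_{\omega \neq 0} (-1)^{|\omega|} P(x + \omega \cdot h)$$
for every $x \in \F^n$ and every $h = (h_1,\ldots,h_{d+1}) \in (\F^n)^{d+1}$. If the $2^{d+1}-1$ vertices $x+\omega \cdot h$ with $\omega \neq 0$ all lie outside $E$, we may replace $P$ by $\tilde P = g \circ \Phi$ on the right-hand side, obtaining $P(x) = \Psi(t_\Box)$, where
$$\Psi(t_\Box) := -\sum_{\omega \neq 0} (-1)^{|\omega|} g(t_\Box(\omega)),$$
and $t_\Box := (\Phi(x + \omega \cdot h))_\omega$ lies in the slice $\Sigma_\Box(t_0) := \{s \in \Sigma_\Box : s(0) = t_0\}$ for $t_0 := \Phi(x)$. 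The key point is that $\Psi$ depends on $x$ and $h$ only through $t_\Box$.

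Now fix $x$ with $\Phi(x) = t_0$, and let $S_x \subseteq \Sigma_\Box(t_0)$ denote the set of $t_\Box$ realized by some \emph{good} $h$, meaning one for which $x + \omega \cdot h \notin E$ for every $\omega \neq 0$. A union bound caps the number of \emph{bad} $h \in (\F^n)^{d+1}$ by $(2^{d+1}-1)\sigma_d |\F|^{n(d+1)}$. By the inductive hypothesis, Proposition \ref{parallel-count} applies with $k = d+1$ and the degree-$(d-1)$ factor $\Factor$ (the conditions $k > d-1$ and $|\F| > d-1$ both follow from $d < |\F|$), provided $F$ grows sufficiently rapidly in terms of $d, \eps, \dim(\Factor)$. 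This yields, for every $t_\Box \in \Sigma_\Box(t_0)$, that the number of $h$ realizing it is $(1 + O(\eps)) |\F|^{n(d+1)}/|\Sigma_\Box(t_0)|$. Since every $h$ witnessing a configuration in $\Sigma_\Box(t_0) \setminus S_x$ must be bad, summing gives
$$|\Sigma_\Box(t_0) \setminus S_x| \leq (2^{d+1}-1)(1+O(\eps))\, \sigma_d \,|\Sigma_\Box(t_0)|.$$
Choosing $\sigma_d < 2^{-d-2}$ and insisting that $\eps$ (hence $F$) is small enough makes $S_x$ have density strictly greater than $1/2$ in $\Sigma_\Box(t_0)$, uniformly in $x$.

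For any two points $x, x' \in \Phi^{-1}(t_0)$, the sets $S_x$ and $S_{x'}$ therefore intersect, and any $t_\Box$ in the intersection witnesses $P(x) = \Psi(t_\Box) = P(x')$. Hence $P$ is constant on every atom and so $\Factor$-measurable. The main obstacle is precisely promoting ``agreement on most vertices of a random parallelepiped'' to a uniform density estimate at every \emph{fixed} base point $x$ (including $x \in E$), as opposed to on average; this is exactly what the parallelepiped counting lemma provides, and it is the source of both the inductive hypothesis on Theorem \ref{wtn} at orders up to $d-1$ and the requirement that $F$ grow sufficiently fast.
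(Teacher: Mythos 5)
Your proof is correct, and it takes a genuinely different and noticeably simpler route than the paper's. The paper's argument proceeds in two steps (``Procedure~1'' and ``Procedure~2''): it first identifies the $1-O(\sqrt{\sigma_d})$ fraction of atoms on which $P$ agrees with $\tilde P$ on most points, proves via Lemma~\ref{avoid-bad} that $P$ is in fact \emph{identically} equal to $\tilde P$ on each such atom, and only then handles the remaining small fraction of bad atoms by a second parallelepiped argument. Lemma~\ref{avoid-bad} is the heavy part: because the authors restrict attention to parallelepipeds lying entirely inside a single atom $A$, the small exceptional set $A\setminus A'$ could in principle attract an anomalously large fraction of the parallelepipeds based at $x$; ruling this out requires a Cauchy--Schwarz second-moment argument, an embedding of a pair of $(d+1)$-dimensional parallelepipeds into a single $(2d+1)$-dimensional one, and the Extension Lemma~\ref{ext-lem}. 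Your argument avoids all of this by exploiting the full $\Factor$-measurability of $\tilde P$ rather than merely its constancy on each atom in isolation: once you observe that $\Psi(t_\Box):=-\sum_{\omega\neq 0}(-1)^{|\omega|}g(t_\Box(\omega))$ is a well-defined function of the configuration alone, you are free to let the non-$x$ vertices of a parallelepiped wander over the whole of $\F^n$, and then the bad $h$ really do have density $O(2^{d+1}\sigma_d)$ in $(\F^n)^{d+1}$ by a one-line union bound (each coordinate map $h\mapsto x+\omega\cdot h$, $\omega\neq 0$, is an affine surjection with uniform fibres). Proposition~\ref{parallel-count} -- the same key tool the paper relies on, and crucially stated for a \emph{fixed} base point $x$ -- then converts the density bound on $h$ into a density bound on configurations $t_\Box$, and the pigeonhole on $S_x\cap S_{x'}$ finishes matters in one stroke, treating every atom uniformly. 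What you lose relative to the paper's Procedure~1 is the slightly stronger intermediate conclusion that $P=\tilde P$ on most atoms, but since the proposition only demands $\Factor$-measurability of $P$ (constancy on atoms), nothing is needed from that stronger statement downstream. In short: same inductive scaffolding, same invocation of the counting lemma, but a cleaner decoupling that makes both Lemma~\ref{avoid-bad} and the second procedure unnecessary.
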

\emph{Proof of Theorem \ref{wtn} assuming Proposition \ref{99-to-100}.} This is almost immediate.  By induction we may fix $d \geq 1$ and assume that Theorem \ref{wtn} holds for all orders up to $d-1$. Take the function $\tilde P$ appearing in the conclusion of Lemma \ref{bog-viola-lem}. By construction, $\tilde P$ is measurable with respect to some factor  $\Factor_0$ of degree at most $d-1$ and dimension no more than $|\F|^5/\delta^2 \sigma$. By Lemma \ref{regular} we may extend $\Factor_0$ to a factor $\Factor$ which is $F$-regular and satisfies $\dim(\Factor) \ll_{F,d,\delta,\F} 1$. The function $\tilde P$ is manifestly $\Factor$-measurable, and so the result follows upon applying Proposition \ref{99-to-100}.\endproof

\emph{Proof of Proposition \ref{99-to-100}.} We use the same notation for the factor $\Factor$ that was introduced in Definition \ref{factor-def}. In particular this factor is defined by polynomials $P_{i,j} \in \mathcal{P}_i(\F^n)$: these should not be confused with the polynomial $P$ which is the subject of Proposition \ref{99-to-100}.

For the purposes of an initial discussion write $X$ for the set of points in $\F^n$ for which $P(x) = \tilde P(x)$, thus $|X| \geq (1 - \sigma_d)|\F^n|$. The key idea is that we may use $(d+1)$-dimensional parallelepipeds in $X$ to create new points $x'$ for which $P(x')$ does not depend on which atom of $\Factor$ the point $x'$ lies in. There are two procedures we might use:

\emph{1. Completing atoms.} Suppose that $x,h_1,\dots,h_{d+1}$ are such that all $2^{d+1}$ points $x + \omega \cdot h$ lie in the same atom $A$ of $\sigma(\mathcal{F})$. Suppose in addition that $x + \omega \cdot h \in X$ whenever $\omega \neq 0$. Then using the relation $\sum_{\omega} (-1)^{|\omega|} P(x + \omega \cdot h) = 0$ and the fact that $\tilde P$ is constant on $A$, we see that $x$ also lies in $X$.

\emph{2. Creating new atoms on which $P$ is constant.} Suppose that $A$ is an atom of $\sigma(\Factor)$ such that there are atoms $A_{\omega}$, $\omega \in \{0,1\}^{d+1} \setminus 0^{d+1}$ with the following property. For any $x \in A$, there are $h_1,\dots,h_{d+1} \in \F^n$ such that $x + \omega \cdot h \in A_{\omega}$ for all $\omega \in \{0,1\}^{d+1} \setminus 0$. Then if $P$ is constant on each of the $A_{\omega}$, it is also constant on $A$. This follows from the relation $\sum_{\omega} (-1)^{|\omega|} P(x + \omega \cdot h) = 0$ once again.

It is in fact possible to perform Procedures 1 and 2 simultaneously, but the exposition is fractionally clearer if the urge to do this is suppressed.

Let us start with an analysis of Procedure 1. It is easy to see using Lemma \ref{atom-size} that for $1 - O(\sqrt{\sigma_d})$ of the atoms in $\mathcal{B}$ we have $P_{x \in A}(P(x) = \tilde P(x)) \geq 1 - O(\sqrt{\sigma_d})$. We say that $P$ is \emph{almost constant} on such atoms, and our task is to show that $P$ is actually 100\% constant on each such atom.

Suppose that $P$ is almost constant on the atom $A = \Phi^{-1}(t)$, and write $A' \subseteq A$ for the set where $P = \tilde P$. 

\begin{lemma}[Avoiding bad parallelopipeds]\label{avoid-bad}  Let the notation and assumptions be as above.
Suppose that $\sigma_d$ is chosen sufficiently small. Fix an $x \in A$. Then there is $h$ so that all of the vertices $x + \omega \cdot h$, $\omega \neq 0^{d+1}$, lie in $A'$.
\end{lemma}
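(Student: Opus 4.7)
The plan is to use Proposition \ref{parallel-count} to count $(d+1)$-dimensional parallelepipeds based at $x$ whose vertices all lie in $A$, to show that the distribution of any single non-basepoint vertex is approximately uniform on $A$ over such parallelepipeds, and to finish with a union bound over the $2^{d+1} - 1$ non-basepoint vertices using the density of $A'$ in $A$.

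First, I would apply Proposition \ref{parallel-count} with $k = d + 1 > d - 1 = \deg(\Factor)$, taking $t_\Box \in \Sigma_\Box$ to be the constant configuration $t_\Box(\omega) = t$. This does lie in $\Sigma_\Box$ because $\sum_{\omega \in F} (-1)^{|\omega|} = 0$ on any positive-dimensional face $F$, and $\Phi(x) = t = t_\Box(0)$, so the proposition yields
\[ |\mathcal{H}| := \#\{h \in (\F^n)^{d+1} : \Phi(x + \omega \cdot h) = t \text{ for all } \omega\} = (1 + O_d(\eps))\, |\F|^{n(d+1) - C}, \]
where $C = \sum_{i=1}^{d-1} M_i \sum_{j=1}^i \binom{d+1}{j}$ and $\eps$ may be made as small as desired by choosing the growth function $F$ sufficiently rapidly.

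The technical heart is to show that for each $\omega \neq 0$ and each $y \in A$,
\[ N_y^{(\omega)} := \#\{h \in \mathcal{H} : x + \omega \cdot h = y\} = (1 + O_d(\eps))\, |\mathcal{H}|/|A|; \]
that is, the distribution of $x + \omega \cdot h$ under uniform $h \in \mathcal{H}$ is approximately uniform on $A$. This is obtained by repeating the Fourier-analytic computation of Proposition \ref{parallel-count} with the extra indicator $1_{\omega \cdot h = y - x}$ also Fourier-expanded as $|\F|^{-n} \sum_{s \in \F^n} e_\F(s \cdot (\omega \cdot h - (y - x)))$. The additional linear phase $e_\F(s \cdot \omega \cdot h)$ enlarges the set of main-term frequencies from $r_\Box \in W$ alone to pairs $(r_\Box, s)$ for which $r_\Box \cdot \Phi_\Box(h) + s \cdot \omega \cdot h$ is constant in $h$, and a routine count shows this enlargement multiplies the main term by exactly $|\F|^{\dim(\Sigma)}$ which, combined with the estimate $|A| = (1 + O(\eps)) |\F|^{n - \dim(\Sigma)}$ from Lemma \ref{atom-size}, gives the predicted value $|\mathcal{H}|/|A|$. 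Off-main-term contributions are bounded exactly as in the proof of Proposition \ref{parallel-count}: the extra linear phase contributes only constants under the Cauchy-Schwarz-plus-regularity derivative estimates and so is absorbed harmlessly.

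With this near-uniformity in hand, the hypothesis $|A \setminus A'|/|A| \leq O(\sqrt{\sigma_d})$ yields
\[ \P_{h \in \mathcal{H}}(x + \omega \cdot h \notin A') = \frac{1}{|\mathcal{H}|}\sum_{y \in A \setminus A'} N_y^{(\omega)} \leq (1 + O_d(\eps))\, O(\sqrt{\sigma_d}) \]
for each $\omega \neq 0$. A union bound over the $2^{d+1} - 1$ nonzero $\omega$, with $\sigma_d$ and $\eps$ chosen sufficiently small depending only on $d$, gives $\P_{h \in \mathcal{H}}(\exists \omega \neq 0 : x + \omega \cdot h \notin A') < 1$, so at least one valid $h$ exists. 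The main obstacle lies in the Fourier bookkeeping of the middle step: one must verify that the enlarged main-term set of $(r_\Box, s)$ pairs has exactly $|\F|^{\dim(\Sigma)}|W|$ elements and that the Cauchy-Schwarz off-main-term estimate goes through unchanged despite the extra linear phase.
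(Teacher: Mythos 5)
Your overall strategy — count parallelepipeds with all vertices in $A$, control how often a given vertex lands in $A\setminus A'$, and finish by union bound over the $2^{d+1}-1$ non-base vertices — is the same kind of argument as the paper's, and the final union-bound step is a clean way to conclude. However the middle step, where you assert that the distribution of $x+\omega_0\cdot h$ over $h\in\mathcal{H}$ is approximately uniform on $A$ \emph{pointwise}, is both stronger than what the paper proves and not established by the sketch you give; this is where there is a genuine gap.

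Specifically, you claim that Fourier-expanding the extra indicator $1_{\omega_0\cdot h=y-x}$ enlarges the set of main-term frequencies by exactly a factor of $|\F|^{\dim(\Sigma)}$, and that this is ``a routine count.'' It is not, and as stated it appears to be wrong. The additional pairs $(r_\Box,s)$ with $r_\Box\cdot\Phi_\Box(h)+s\cdot\omega_0\cdot h$ constant arise from cancellation of a linear form in $h$ against $s\cdot\omega_0\cdot h$. When $r_\Box$ picks out a degree-$1$ polynomial $P_{1,j}$ at the face $\{0,\omega_0\}$ one gets $r_\Box\cdot\Phi_\Box(h) = -D_{\omega_0\cdot h}P_{1,j}(x)$, which is linear in $\omega_0\cdot h$ and cancels against $s\cdot\omega_0\cdot h$ for the one value $s=-\nabla P_{1,j}$; this gives $|\F|^{M_1}$ new cosets, not $|\F|^{\dim\Sigma}$. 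For $P_{i,j}$ with $i\geq 2$ the analogous term $-D_{\omega_0\cdot h}P_{i,j}(x)$ has degree $i\geq 2$ in $\omega_0\cdot h$ (indeed, its top part is the degree-$i$ homogeneous part of $P_{i,j}$, which cannot vanish by $F$-regularity), so no cancellation against a linear phase is possible, and I do not see where the remaining factor $|\F|^{M_2+\cdots+M_{d-1}}$ is supposed to come from. So either the pointwise claim is more subtle than uniform over $y$ (some frequencies become constant-phase only for special values of $y-x$, e.g. $y-x$ in the kernel of a bilinear form coming from a second derivative of a $P_{2,j}$), or there is a further family of main terms you have not identified; either way the computation is not routine.

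The paper sidesteps this entirely: instead of a pointwise bound on $N_y$, it applies Cauchy--Schwarz so that only the second moment $\sum_y N_y^2$ (i.e.\ the count of \emph{pairs} of parallelepipeds sharing the $\omega_0$-vertex) is needed, and that count is carried out by embedding the two parallelepipeds into a $(2d+1)$-dimensional parallelepiped and using the extension lemma (Lemma \ref{ext-lem}) to account precisely for the free and constrained coordinates. That combinatorial bookkeeping is the heart of the proof of Lemma \ref{avoid-bad}. Your proposal replaces it with a one-sentence appeal to a routine count that, on inspection, does not come out as claimed. If you want to pursue the pointwise route you would need to prove a genuine two-basepoint analogue of Proposition \ref{parallel-count}, which involves essentially the same careful constraint-count the paper performs; otherwise, replace the pointwise step with the Cauchy--Schwarz reduction to the second moment as in the paper, which is strictly easier and suffices.
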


\proof
Let $N_\Box(x)$ denote the number of parallelopipeds $(x + \omega \cdot h)_{\omega \in \{0,1\}^{d+1}}$, all of whose vertices lie in $A$. 
The vector $(t,t,\dots,t) \in \Sigma^{\{0,1\}^{d+1}}$  trivially satisfies the parallelepiped constraints, and so by Proposition \ref{parallel-count} we have
\begin{equation}\label{nbox-size}
N_\Box(x) \sim |\F|^{n(d+1) - \sum_{i=1}^d M_i \sum_{1 \leq j \leq i} \binom{d+1}{j}}
\end{equation}
if $F$ is sufficiently rapidly growing. 

The number $N_\Box(x)$ of parallelopipeds in $A$ is thus quite large.  Unfortunately, this does not immediately imply that the number of paralleopipeds in $A'$ is large, as the $N_\Box(x)$ parallelopipeds in $A$ may all be intersecting the small set $A \backslash A'$.  However, it will turn out that such a concentration in $A \backslash A'$ can be picked up via the Cauchy-Schwarz inequality, as it will force into existence an anomalously large number of \emph{pairs} of parallelopipeds that share an additional vertex in common besides $x$.  The main difficulty in the proof then lies in counting number of such pairs properly.

We turn to the details.  It suffices to show, for each fixed $\omega_0 \in \{0,1\}^{d+1} \setminus 0^{d+1}$, that the number of parallelepipeds $(x + \omega \cdot h)_{\omega \in \{0,1\}^{d+1}}$, all of whose vertices lie in $A$, and with $x + \omega_0 \cdot h \in A \setminus A'$, is less than $2^{-d-2}N_{\Box}(x)$. The number of such ``bad'' parallelepipeds may be written as
\[ \sum_u 1_{A \setminus A'}(u)\sum_h 1_{x + \omega_0 \cdot h = u},\]
and we may use the Cauchy-Schwarz inequality to bound this above by
\[ |A \setminus A'|^{1/2} \big|\{(h,h') : x+ \omega \cdot h, x + \omega' \cdot h' \in A \;\; \mbox{for all $\omega, \omega' \in \{0,1\}^{d+1}$}, x + \omega_0 \cdot h = x + \omega_0 \cdot h' \}\big|^{1/2}.\]
Thus if $\sigma_d$ is chosen so small that $|A\setminus A'| \leq 2^{-2d-5}|A|$, it suffices to show that 
\begin{align}\nonumber \big|\{(h,h') : x+ \omega \cdot h, x + \omega' \cdot h' \in A \;\; \mbox{for all $\omega, \omega' \in \{0,1\}^{d+1}$}, & x + \omega_0 \cdot h = x + \omega_0 \cdot h' \}\big| \\ & \leq \frac{N_{\Box}(x)^2}{|A|}(1 + O(\eps))\label{eq330}\end{align}
for some sufficiently small $\eps > 0$.

By relabelling the cube $\{0,1\}^{d+1}$ if necessary, this may be recast as the problem of counting the number of $h,h' \in (\F^{n})^{d+1}$ satisfying the constraint 
\[ h_1 + \dots + h_s = h'_1 + \dots + h'_s\] and for which the two parallelepipeds
\[  \Box_1 := (x + \omega \cdot h)_{\omega \in \{0,1\}^{d+1}}\] and 
\[ \Box_2 := (x + \omega \cdot h')_{\omega \in \{0,1\}^{d+1}}\] lie in $A$. Substituting  \eqref{nbox-size} and the approximate size of $|A|$ (cf. Lemma \ref{atom-size}) into \eqref{eq330}, we see that our task is to establish that the number of such $h,h'$ is at most $1+ O(\eps)$ times $|\F|$ to the power $n(2d+1) + \sum_{i=1}^d M_i (1 - 2 \sum_{1 \leq j \leq i}\binom{d+1}{j})$. 

The parallelepipeds $\Box_1$ and $\Box_2$ share the common vertices $x$ and $x + h_1 + \dots + h_s$. Note that $\Box_1$ and $\Box_2$ may be embedded inside a $(2d+1)$-dimensional parallelepiped 
\[ \tilde \Box := (x + \omega \cdot y)_{\omega \in \{0,1\}^{2d+1}},\] where 
\[ y := (h_1,\dots,h_{s-1},h_s - h'_1 - \dots - h'_{s-1},h_{s+1},\dots, h_{d+1}, h'_1,\dots, h'_{s-1}, h'_{s+1},\dots, h'_{d+1}).\]
Thus, writing $\Box_1$ corresponds to the indices
\begin{equation}\label{eq111} \omega \in \{0,1\}^{d+1} \cdot (e_1,\dots,e_{s-1}, e_s + e_{d+2} + \dots + e_{d+s},e_{s+1},\dots,e_{d+1})\end{equation} and $\Box_2$ to the indices
\begin{equation}\label{eq112} \omega \in \{0,1\}^{d+1} \cdot (e_{d+2},\dots,e_{d+s},e_1 + \dots + e_s, e_{d+s+1},\dots, e_{2d+1})\end{equation}
where we use the usual dot product $(\omega_1,\ldots,\omega_{d+1}) \cdot (v_1,\ldots,v_{d+1}) := \omega_1 v_1 + \ldots + \omega_{d+1} v_{d+1}$.

Suppose that $i \in [d]$ and $j \in [M_i]$. Then $P_{i,j}(x + \omega \cdot y)$ is a polynomial of total degree at most $i$ in $\omega_1,\dots,\omega_{2d+1}$. Using the fact that $\omega = \omega^2 = \omega^3 = \dots$ for $\omega \in \{0,1\}$, we see that there exists a polynomial $Q_{i,j} : \Z^{2d+1} \rightarrow \F$ with total degree at most $i$ and degree at most $1$ in each of $\omega_1,\dots,\omega_{2d+1}$ with the property that 
\[ P_{i,j}(x + \omega \cdot y) = Q_{i,j}(\omega)\] for
$\omega \in \{0,1\}^{2d+1}$.  In fact this extension is unique, as the following lemma shows.

\begin{lemma}[Extension lemma]\label{ext-lem}
Suppose that $Q : \Z^k \rightarrow \F$ is a polynomial in variables $x_1,\dots,x_k$ of total degree with degree at most one in each $x_j$. Suppose that $Q(x_1,\dots,x_k)$ is equal to zero for $(x_1,\dots,x_k) \in \{0,1\}^k$. Then $Q \equiv 0$ identically.
\end{lemma}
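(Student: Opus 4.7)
The plan is to prove Lemma \ref{ext-lem} by straightforward induction on the number of variables $k$. The multilinearity hypothesis (degree at most one in each $x_j$) is exactly the hook needed to make the induction clean.

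For the base case $k=1$, the polynomial takes the form $Q(x_1) = a + bx_1$ for some $a,b \in \F$; evaluating at $x_1=0$ gives $a=0$, and at $x_1=1$ gives $a+b=0$, so $a=b=0$ and $Q \equiv 0$.

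For the inductive step, assume the lemma for $k-1$. Since $Q$ has degree at most one in $x_1$, we may write
$$ Q(x_1,\ldots,x_k) = x_1 \, R(x_2,\ldots,x_k) + S(x_2,\ldots,x_k),$$
where $R$ and $S$ are polynomials in $x_2,\ldots,x_k$ of degree at most one in each variable. Specialising $x_1 = 0$ yields $S(x_2,\ldots,x_k) = 0$ for every $(x_2,\ldots,x_k) \in \{0,1\}^{k-1}$, so by the inductive hypothesis $S \equiv 0$. Specialising $x_1 = 1$ then yields $R(x_2,\ldots,x_k) = 0$ on $\{0,1\}^{k-1}$, and again by induction $R \equiv 0$. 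Therefore $Q \equiv 0$, completing the induction.

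There is no real obstacle here; the only thing to observe is that the multilinearity guarantees the decomposition $Q = x_1 R + S$ preserves the multilinearity of $R$ and $S$, so the inductive hypothesis applies. Equivalently, one could argue in one shot by noting that the $2^k$ multilinear monomials $\prod_{i \in T} x_i$, $T \subseteq [k]$, form a basis for the space of such polynomials, and the evaluation map to $\F^{\{0,1\}^k}$ is represented in this basis by a triangular matrix (with respect to inclusion order on subsets) with $1$'s on the diagonal, hence invertible; but the inductive proof above is cleaner and self-contained.
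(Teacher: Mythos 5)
Your proof is correct and is essentially identical to the paper's argument: both induct on $k$, decompose $Q$ into a part independent of one distinguished variable plus that variable times another multilinear polynomial, and apply the inductive hypothesis to each piece. The only cosmetic difference is that you peel off $x_1$ while the paper peels off $x_k$.
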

\proof This appears, for example, as \cite[Lemma 2.1]{combinatorial-nullstellensatz}. We proceed by induction on $k$, the result being trivial when $k = 1$.  We may write
\[ Q(x_1,\dots,x_k) = R(x_1,\dots,x_{k-1}) + x_k S(x_1,\dots,x_{k-1})\] where both $R$ and $S$ have degree at most one in each $x_j$. Noting that $R(x_1,\dots,x_{k-1}) = Q(x_1,\dots,x_{k-1},0)$ and that $S(x_1,\dots,x_{k-1}) = Q(x_1,\dots,x_{k-1},1) - Q(x_1,\dots,x_{k-1},0)$, we see that $R(x_1,\dots,x_{k-1}) = S(x_1,\dots,x_{k-1}) = 0$ for all $x_j \in \{0,1\}$. By the inductive hypothesis this implies that $R \equiv S \equiv 0$ identically. \endproof

It follows from Lemma \ref{ext-lem}, \eqref{eq111}, \eqref{eq112} and the fact that $P_{i,j}(\Box_1)$ and $P_{i,j}(\Box_2)$ are fixed that $Q_{i,j}(\omega)$ is fixed for $\omega$ in both of the $d+1$-dimensional lattices
\[ \Lambda := \Z^{d+1} \cdot (e_1,\dots,e_{s-1},v,e_{s+1},\dots,e_{d+1})\]
and
\[ \Lambda' := \Z^{d+1} \cdot (e_{d+2},\dots,e_{d+s},v,e_{d+s+1},\dots,e_{2d+1}),\] where $v \in \Z^{2d+1}$ is the vector
\[ v := e_1 + \dots + e_s + e_{d+2} + \dots + e_{d+s}.\]
A second application of Lemma \ref{ext-lem}, noting that $2d > i$, confirms that $Q_{i,j}$ is determined on 
\[ \Z^{2d} \cdot (e_1,\dots,e_{s-1},e_{s+1},\dots,e_{2d+1}) + \{0,1\} \cdot v\]
by its values on
\[ S := \{0,1\} \cdot (e_1,\dots,e_{s-1},e_{s+1},\dots,e_{2d+1},v).\]
In particular we see that $Q_{i,j}(\omega)$, and hence $P_{i,j}(x + \omega \cdot y)$, is determined for $\omega \in \{0,1\}^{2d+1}$ by its values on $S$. Since $Q_{i,j}$ has degree at most $i$ we see that it is determined on $S$ by its values at arguments which are the sum of at most $i$ elements from $\{e_1,\dots,e_{s-1},e_{s+1},\dots,e_{2d+1},v\}$. 

Of the $\sum_{0 \leq j \leq i} \binom{2d+1}{j}$ possible choices for the values of the polynomials $Q_{i,j}$ at these arguments, $2\sum_{0 \leq j \leq i} \binom{d+1}{j} - 2$ of them are already fixed for us since $Q_{i,j}$ is fixed in both $\Lambda$ and $\Lambda'$. It follows that the number of choices of $(P_{i,j}(x + \omega \cdot y))_{\omega \in \{0,1\}^{2d+1}}$ is at most $|\F|$ to the power $1 + \sum_{1 \leq j \leq i} \binom{2d+1}{j} - 2\binom{d+1}{j}$. Summing over $i$ and $j$, it follows that the number of choices for $\Phi(\tilde \Box)$ subject to our constraints on $\Phi(\Box_1)$ and $\Phi(\Box_2)$ is at most $|\F|$ to the power $\sum_{i=1}^d M_i \big(1 + \sum_{1 \leq j \leq i} \binom{2d+1}{j} - 2\binom{d+1}{j}  \big)$.

For each such choice the number of $\tilde \Box$ is, by Proposition \ref{parallel-count}, $1 + O(\eps)$ times $|\F|$ to the power $n(2d+1) - \sum_{i=1}^{d-1} M_i \sum_{1 \leq j \leq i} \binom{2d+1}{j}$, and so the total number of $\tilde \Box$ is $1 + O(\eps)$ times $|\F|$ to the power $n(2d+1) + \sum_{i=1}^{d-1} M_i (1 - 2\sum_{1 \leq j\leq i} \binom{d+1}{j})$, which is what we wanted to prove. This concludes the proof of Lemma \ref{avoid-bad}.\endproof

Recall that $A' \subseteq A$ is the set of points where $P(x) = \tilde P(x)$. Now $A$ is an atom in the factor $\Factor$, which has degree $d-1$, and $P$ is a polynomial of degree $d$. We therefore see that if all the points $x + \omega \cdot h$, $\omega \in \{0,1\}^{d+1} \setminus 0^{d+1}$, lie in $A'$ then so does $x$. It follows from Lemma \ref{avoid-bad} that $A' = A$.

This completes the analysis of Procedure 1, and we find ourselves in the situation that $P(x) = \tilde P(x)$ on $1 - O(\sqrt{\sigma_d})$ of the atoms in $\sigma(\Factor)$. Call these the \emph{good} atoms. To perform procedure 2, we need only show that for any (bad) atom $A = A_0$ there are good atoms $A_{\omega}$, $\omega \in \{0,1\}^{d+1} \setminus 0^{d+1}$, such that the sequence of coordinates $t_{\Box} = \Phi(A_{\omega}) \in \Sigma^{\{0,1\}^{d+1}}$ satisfies the parallelepiped constraints. To do this it suffices to find just a single parallelepiped $(x + \omega \cdot h)_{\omega \in \{0,1\}^d}$ for which all of $x + \omega \cdot h$, $\omega \in \{0,1\}^{d+1} \setminus 0^{d+1}$, lie in good atoms. To see that this is possible, fix $x \in A_0$ and pick $h_1,\dots,h_{d+1}$ at random. It is clear that for any fixed $\omega \neq 0^{d+1}$, the probability that $x + \omega \cdot h$ lies in a good atom is the same as the probability that a random element of $\F^n$ lies in a good atom, which is $1 - O(\sqrt{\sigma_d})$ by Lemma \ref{atom-size}. If $\sigma_d \leq c2^{-2d}$ for sufficiently small $c$ it follows that there is indeed positive probability that all of the $x + \omega \cdot h$, $\omega \in \{0,1\}^{d+1} \setminus 0^{d+1}$, lie in good atoms.

We have now successfully performed Procedures 1 and 2. By earlier remarks, this concludes the proof of Proposition \ref{99-to-100} and hence, by the remarks at the start of the section, that of Theorem \ref{wtn}.\endproof

\section{Inverse theorems for the Gowers norm}

We can now give a fairly quick proof of Theorem \ref{main-2}.
We begin with a preliminary result which is already of interest.

\begin{proposition}\label{gowers-1} Suppose that $|\F| > d+1 \geq 2$ and that $\delta > 0$, let $P \in \mathcal{P}_{d+1}(\F^n)$, and write $f(x) := e_{\F}(P(x))$. Suppose that $\| f \|_{U^{d+1}} \geq \delta$.  Then $\rank_{d}(P) \ll_{d,\delta} 1$.  
\end{proposition}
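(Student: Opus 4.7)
The plan is to reduce the Gowers norm estimate to an exponential-sum estimate for the iterated derivative of $P$, and then feed this into the main technical theorem, Theorem \ref{wtn}.

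First I would use the identity $\Delta_{h_1}\cdots\Delta_{h_{d+1}} f(x) = e_\F(D_{h_1}\cdots D_{h_{d+1}} P(x))$ together with the fact that a $(d{+}1)$-fold additive derivative of a polynomial $P$ of degree $d+1$ no longer depends on $x$: it is a symmetric multilinear form on $(\F^n)^{d+1}$ which I will denote $\partial^{d+1} P(h_1,\ldots,h_{d+1})$. Unpacking the definition of $\|f\|_{U^{d+1}}$ then gives
\[ \delta^{2^{d+1}} \leq \|f\|_{U^{d+1}}^{2^{d+1}} = \bigl|\E_{h_1,\ldots,h_{d+1} \in \F^n} e_\F\bigl(\partial^{d+1} P(h_1,\ldots,h_{d+1})\bigr)\bigr|.\]

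Next I would view $\partial^{d+1} P$ as a polynomial on the vector space $V := (\F^n)^{d+1}$ of total degree $d+1$ in the $(d{+}1)n$ scalar variables (multilinearity makes this degree bound clear). Since our hypothesis is $d+1 < |\F|$, Theorem \ref{wtn} applies to this polynomial at degree $d+1$, yielding
\[ \rank_d(\partial^{d+1} P) \ll_{d,\delta} 1,\]
i.e.\ there are polynomials $R_1,\ldots,R_k : V \to \F$ of degree at most $d$ with $k \ll_{d,\delta} 1$, and a function $B$, such that $\partial^{d+1} P = B(R_1,\ldots,R_k)$.

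Finally I would pass from $V$ back to $\F^n$ by restricting to the diagonal and using the Taylor-type identity
\[ P(x) = \tfrac{1}{(d+1)!}\, \partial^{d+1} P(x,x,\ldots,x) + Q(x)\]
for some $Q \in \mathcal{P}_d(\F^n)$; here $(d{+}1)!$ is invertible in $\F$ precisely because $|\F|>d+1$. Restricting each $R_j$ to the diagonal gives a polynomial $\tilde R_j$ on $\F^n$ of degree at most $d$, and hence $\partial^{d+1} P(x,\ldots,x) = B(\tilde R_1(x),\ldots,\tilde R_k(x))$ has $\rank_d \leq k$. Adding $Q$ and the scalar $1/(d+1)!$ costs at most one extra generator, so $\rank_d(P) \leq k+1 \ll_{d,\delta} 1$.

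There is no real obstacle here once Theorem \ref{wtn} is in hand: the only subtlety is the characteristic hypothesis, which is used in two essential places. We need $d+1<|\F|$ both to be allowed to invoke Theorem \ref{wtn} at degree $d+1$ (this is exactly the hypothesis of that theorem) and to invert $(d+1)!$ in the final Taylor step. Both uses line up with the stated assumption $|\F|>d+1$, which is why the argument works precisely in the regime of the proposition.
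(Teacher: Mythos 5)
Your proof is correct and follows essentially the same route as the paper: pass to the $(d+1)$-fold derivative $\partial^{d+1}P$, observe it is independent of $x$ and that the $U^{d+1}$-norm hypothesis gives a bias for the exponential sum $\E_h e_\F(\partial^{d+1}P(h))$, apply Theorem \ref{wtn} on $V=(\F^n)^{d+1}$ at degree $d+1$ (using $|\F|>d+1$), and then recover $P$ via the Taylor identity $P(x) = \frac{1}{(d+1)!}\partial^{d+1}P(x,\ldots,x)+Q(x)$. The paper states this more tersely; your extra bookkeeping (restricting the $R_j$ to the diagonal, adding $Q$ as one extra generator, and flagging that $|\F|>d+1$ is used both to invoke Theorem \ref{wtn} and to invert $(d+1)!$) is a welcome clarification but not a different argument.
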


\proof   
Write $\partial^{d+1} P(h_1,\dots,h_{d+1}) := D_{h_1} \dots D_{h_{d+1}}P(x)$. Since $P$ has degree $d+1$, this does not depend on $x$. From the definition of the $U^{d+1}$ norm, we have
$$ |\E_{h \in (\F^n)^{d+1}} e_\F(\partial^{d+1}P(h))| = \| f \|_{U^{d+1}}^{2^{d+1}} \geq \delta^{2^{d+1}}.$$
Applying Theorem \ref{wtn}, we conclude that
$$ \rank_{d}( \partial^{d+1} P ) \ll_{d,\delta} 1.$$
But since $|\F| > d + 1$ we have the Taylor expansion
\[ P(x) = \frac{1}{(d+1)!} \partial^{d+1} P(x,x,\dots,x) + Q(x),\]
where $\deg Q \leq d$. Thus the rank of $P$ is itself bounded by $O_{d,\delta}(1)$, as required.
\endproof

\emph{Proof of Theorem \ref{main-2}.} We fix $d$ and induct on $k$.  The cases $k \leq d$ are trivial (since $\|f\|_{u^{d+1}}=1$ in these cases), so we first verify the case $k=d+1$.  In this case, we know from Proposition \ref{gowers-1} that $\rank_{d}(P) \ll_{d,\delta} 1$, thus we can express $f(x) = e_{\F}(P(x))$ as some function of $O_{d,\delta}(1)$ polynomials of degree at most $d$.  By Fourier analysis, we can therefore obtain a representation
$$ f(x) = \sum_{j=1}^J c_j e_{\F}(Q_j(x))$$
where $J = O_{d,\delta}(1)$, $Q_j \in \mathcal{P}_{d}(\F^n)$, and $c_j$ are complex numbers of magnitude $O_{d,\delta}(1)$ for all $j \in [J]$.  It follows immediately that $f$ has inner product at $\gg_{d,\delta} 1$ with at least one of the functions $e_{\F}(Q_i(x))$, and therefore $\Vert f \Vert_{u^{d+1}} \gg_{d,\delta} 1$ as desired.

Now suppose that $k>d$ and the claim has already been proven for polynomials of degree $k$.  Suppose that $P \in \mathcal{P}_{k+1}(\F^n)$, that $f(x) := e_{\F}(P(x))$ and that $\Vert f \Vert_{U^{d+1}} \geq \delta$. By the monotonicity  of Gowers norms (see e.g. \cite[Chapter 11]{tao-vu}) we have
$$ \| f \|_{U^{k+1}} \geq \delta$$
and thus by Proposition \ref{gowers-1} we obtain
$$ \rank_{k}(P) \ll_{k,\delta} 1.$$
Let $F$ be a growth function (depending on $k,\delta,d$) to be chosen later.  Applying Lemma \ref{regular}, we can find an $F$-regular factor $\Factor$ of degree $k$ and dimension $O_{F,k,d,\delta}(1)$ such that $P$ is measurable with respect to $\sigma(\Factor)$.  By Fourier expansion, we can thus express
$$ f(x) = \sum_{Q_1 \in \Span(\Factor_1), \ldots, Q_{k} \in \Span(\Factor_{k})} c_{Q_1,\ldots,Q_{k}} e_\F( Q_1(x) + \ldots + Q_{k}(x) )$$
where the coefficients $c_{Q_1,\ldots,Q_{k}}$ are complex numbers of magnitude at most $B$ for some $B = O_{k,\dim(\Sigma)}(1)$.  We may use this expansion to split $f$ as $f_1 + f_2$, where
\begin{equation}\label{fup-def}
 f_1(x) := \sum_{Q_1 \in \Span(\Factor_1), \ldots, Q_d \in \Span(\Factor_d)} c_{Q_1,\ldots,Q_{d},0,\ldots,0} e_\F( Q_1(x) + \ldots + Q_{d}(x) )
\end{equation}
and
\begin{equation}\label{fu-def}
f_2(x) := \sum_{\substack{Q_1 \in \Span(\Factor_1), \ldots, Q_{k} \in \Span(\Factor_k) \\ Q_s \neq 0 \; \hbox{\scriptsize for some } s > d}} c_{Q_1,\ldots,Q_{k}} e_\F( Q_1(x) + \ldots + Q_{k}(x) ).
\end{equation}
Thus $f_2$ is the part of $f$ which ``genuinely has degree larger than $d$''. We shall show the $U^{d+1}$-norm of this part is small.

Suppose that polynomials $Q_1 \in \Span(\Factor_1),\ldots,Q_{k} \in \Span(\Factor_{k})$ are such that $Q_s$ is non-zero and $Q_{s+1},\ldots,Q_{k-1}$ all vanish for some $s > d$.  Since $\Factor$ is $F$-regular, we have $\rank_{s-1}(Q_{s}) \geq F(\dim(\Factor))$, and thus
\begin{equation}\label{souse}
 \rank_{s-1}( Q_1 + \ldots + Q_{k} - Q ) \geq F(\dim(\Factor)) - 1
\end{equation}
for any $Q \in \mathcal{P}_{d}(\F^n)$.
Applying Theorem \ref{wtn} and the induction hypothesis, we conclude (if $F$ is large enough) that
$$ \| e_\F( Q_1 + \ldots + Q_{k} ) \|_{U^{k+1}} \leq \frac{\delta}{2B |\Factor_1| \ldots |\Factor_{k}|}.$$
Since the Gowers $U^{k+1}$-norm obeys the triangle inequality (see e.g. \cite[Lemma 3.9]{gowers}), it follows that $\| f_2\|_{U^{k+1}} \leq \delta/2$. Recalling that $\Vert f \Vert_{U^{k+1}} \geq \delta$, another application of the triangle inequality implies that $\Vert f_1 \Vert_{U^{k+1}} \geq \delta/2$.
Now by Cauchy-Schwarz we have
$$ \| f_1 \|_{U^{k+1}}^{2^{k+1}} \leq \| f_{1} \|_{2}^2 \| f_{1} \|_{\infty}^{2^{k+1}-2}.$$
From the bounds on the Fourier coefficients $c_{Q_1,\dots,Q_k}$ we have $\| f_1 \|_{\infty} \ll_{k,\dim(\Factor)} 1$, and therefore
$$ \langle f_1, f_1 \rangle = \| f_1 \|_{2}^2 \gg_{d,k,\delta,\dim(\Factor)} 1.$$
From \eqref{fup-def} and the pigeonhole principle it follows that there exist $Q_1 \in \Factor_1, \ldots, Q_{d} \in \Factor_{d}$ such that
$$ |\langle f_1, e_\F( Q_1 + \ldots + Q_{d} ) \rangle| \geq \eps$$
for some $\eps \gg_{d,k,\delta,\dim(\Sigma)} 1$. On the other hand, from \eqref{souse}, Theorem \ref{wtn}, and \eqref{fu-def} we have
$$ |\langle f_2, e_\F( Q_1 + \ldots + Q_{d} ) \rangle| \leq \eps/2$$
if $F$ grows sufficiently rapidly. Hence from one further application of the triangle inequality we have
\[ |\langle f, e_{\F}(Q_1 + \dots + Q_d)\rangle| \geq \eps/2,\] and thus $\Vert f \Vert_{u^d} \geq \eps/2$. Therefore the induction goes through and we have proved Theorem \ref{main-2}.
\endproof

\section{A recurrence result}

Proposition \ref{99-to-100} had a rather lengthy proof.  However, the claim is much simpler in the case when the factor $\Factor$ is trivial.  More precisely, we have the following slight generalization of \cite[Proposition 4.5]{tao-stoc}. 

\begin{lemma}[Non-zero polynomials do not vanish almost everywhere]\label{nonzero}
Suppose that $P \in \mathcal{P}_d(\F^n)$ and that $\P_{x \in \F^n}(P(x) = 0) > 1 - 2^{-d}$. Then $P$ is identically zero.
\end{lemma}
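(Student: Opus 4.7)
The plan is to proceed by induction on the degree $d$, using the additive derivative $D_h P$ to reduce to lower degrees.

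For the base case $d = 0$, the polynomial $P$ is a constant, and the hypothesis $\P_{x \in \F^n}(P(x)=0) > 1 - 2^0 = 0$ just says that $P$ vanishes at some point, hence $P \equiv 0$.

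For the inductive step, suppose the claim holds at degree $d-1$, and let $P \in \mathcal{P}_d(\F^n)$ satisfy $\P_{x}(P(x)=0) > 1 - 2^{-d}$. Fix any shift $h \in \F^n$. By a union bound,
\[
\P_x\bigl(P(x)=0 \text{ and } P(x+h)=0\bigr) \geq 1 - 2\cdot 2^{-d} = 1 - 2^{-(d-1)},
\]
and on this event $D_h P(x) = P(x+h) - P(x) = 0$. Since $D_h P \in \mathcal{P}_{d-1}(\F^n)$, the inductive hypothesis applies and forces $D_h P \equiv 0$. As this holds for every $h$, the polynomial $P$ has degree at most $0$, i.e., is constant; and since $1 - 2^{-d} > 0$ there is at least one $x$ at which $P(x) = 0$, so $P \equiv 0$.

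There is no real obstacle here — the only thing to be slightly careful about is that the threshold $1 - 2^{-d}$ is exactly what makes the two-point union bound work, producing $1 - 2^{-(d-1)}$ at the next level so that the induction closes cleanly. The constant $2^{-d}$ is sharp: for instance, in $\F_2$ the polynomial $x_1 x_2 \cdots x_d$ has degree $d$, is nonzero, yet vanishes on exactly a $(1 - 2^{-d})$-fraction of $\F_2^n$, so the strict inequality in the hypothesis cannot be relaxed to a non-strict one.
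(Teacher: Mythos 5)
Your proof is essentially the paper's: induct on $d$, union-bound the event that both $P(x)$ and $P(x+h)$ vanish, and apply the inductive hypothesis to $D_h P \in \mathcal{P}_{d-1}(\F^n)$. One small point: you wrote $\geq 1 - 2^{-(d-1)}$ from the union bound, but the inductive hypothesis needs the strict inequality; since the hypothesis on $P$ is strict, the union bound in fact gives $> 1 - 2^{-(d-1)}$, and the induction closes exactly as the paper's does.
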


\begin{remark} This lemma is almost certainly folkloric, but we do not have a precise reference for it.
\end{remark}

\proof We proceed by induction on $d$, the result being obvious for $d = 1$. For any fixed $h$ we have $\P_{x \in \F^n}(P(x+h) = P(x) = 0) > 1 - 2^{-(d-1)}$. Applying the inductive hypothesis to $P(x+h) - P(x) \in \mathcal{P}_{d-1}(\F^n)$, we see that $P(x+h) - P(x) = 0$ for all $x,h$. This manifestly implies the result.\endproof

A short consequence of Lemma \ref{nonzero} is the following curious recurrence result. 

\begin{lemma}[Multiple polynomial recurrence]\label{mpr}  Suppose that $d, k \geq 1$ are integers, that $P_1,\ldots,P_k \in \mathcal{P}_d(\F^n)$ are polynomials and that $x_0 \in \F^n$.  Then
$$ \P_{x \in \F^n} ( P_i(x) = P_i(x_0) \hbox{ for all } i = 1,\dots,k ) \geq 2^{-(|\F|-1)kd}.$$
\end{lemma}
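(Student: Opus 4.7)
The plan is to construct a single polynomial whose non-vanishing set is exactly the event we want to lower bound, and then apply Lemma \ref{nonzero}.

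First I would exploit Fermat's little theorem in $\F$: for any $y \in \F$, one has $y^{|\F|-1} = 1$ if $y \neq 0$ and $y^{|\F|-1} = 0$ if $y = 0$. Consequently, for each $i$, the function
\[ 1 - (P_i(x) - P_i(x_0))^{|\F|-1} \]
is a polynomial in $x$ over $\F$ of degree at most $d(|\F|-1)$, taking the value $1$ precisely when $P_i(x) = P_i(x_0)$ and the value $0$ otherwise.

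Next I would set
\[ T(x) := \prod_{i=1}^{k} \bigl(1 - (P_i(x) - P_i(x_0))^{|\F|-1}\bigr), \]
which is a polynomial of degree at most $kd(|\F|-1)$ on $\F^n$. By construction, $T(x) = 1$ if $P_i(x) = P_i(x_0)$ for all $i$, and $T(x) = 0$ otherwise; in particular $T(x_0) = 1$, so $T$ is not identically zero. The quantity we are trying to bound is precisely
\[ \P_{x \in \F^n}(P_i(x) = P_i(x_0) \text{ for all } i) = \P_{x \in \F^n}(T(x) \neq 0). \]

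Finally I would invoke the contrapositive of Lemma \ref{nonzero}: since $T$ is a nonzero polynomial of degree at most $kd(|\F|-1)$, we have $\P_{x \in \F^n}(T(x) = 0) \leq 1 - 2^{-kd(|\F|-1)}$, which rearranges to the desired bound $\P_{x \in \F^n}(T(x) \neq 0) \geq 2^{-(|\F|-1)kd}$. There is essentially no obstacle here beyond noticing the right indicator-polynomial construction; the degree count is straightforward and the application of Lemma \ref{nonzero} is immediate.
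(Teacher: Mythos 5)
Your proposal is correct and is essentially the same argument as the paper's: the paper uses $Q(x) := \prod_{i=1}^k \prod_{t \neq P_i(x_0)} (P_i(x)-t)$, which is a nonzero scalar multiple of your $T(x)$ (since $\prod_{t\neq a}(y-t) = -(1-(y-a)^{|\F|-1})$ over $\F$), and then applies Lemma \ref{nonzero} in the contrapositive exactly as you do. The degree count and the conclusion are identical.
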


\begin{proof} Consider the polynomial
\[ Q(x) := \prod_{i=1}^k \prod_{\substack{t \in \F\\ t \neq P_i(x_0)}} (P_i(x) - t).\]
This polynomial has degree $(|\F| - 1)kd$, and clearly $Q(x_0) \neq 0$.  Applying Lemma \ref{nonzero} in the contrapositive, we conclude
$$ \P_x (Q(x) \neq 0) \geq 2^{-(|\F|-1)kd}$$
and the claim follows.
\end{proof}

\begin{remark} In the case $d < |\F|$, one could also obtain a qualitative version of Lemma \ref{mpr} by combining Lemma \ref{regular} (applied to the factor generated by $P_1,\ldots,P_k$) followed by Lemma \ref{atom-size}.  Of course, the bounds obtained by this approach are far weaker.
\end{remark}

\section{Representations that respect degree}

The results of this section and the next are somewhat technical, and by necessity some of the notation is a little fearsome. First-time readers may wish to skip to the discussion of the counterexample of Theorem \ref{main-1}, which is presented in \S \ref{counter-sec}. 

In previous sections we showed discussed the notion of low-rank polynomials $P \in \mathcal{P}_d(\F^n)$, which can be expressed as $B(Q_1,\dots,Q_k)$ with $Q_i \in \mathcal{P}_{d-1}(\F^n)$. In this section we show how (under a regularity assumption on the factor generated by the $Q_i$) the function $B$ can be chosen to be a polynomial with controlled degree.

\begin{definition}  Let $\Factor$ be a factor of degree $d \geq 1$ on a $\F^n$.  A \emph{$\Factor$-monomial} is any product of the form $\prod_{j=1}^J Q_j$, where each $Q_j$ belongs to one of the vector spaces $\Span(\Factor_{d_j})$ for some $d_j \in \{1,\ldots,d\}$.  The \emph{$\Factor$-degree} of the $\Factor$-monomial $\prod_{j=1}^J Q_j$ is defined to be $\sum_{j=1}^J d_j$.  If $D \geq 0$, we define a \emph{$\Factor$-polynomial} of $\Factor$-degree at most $D$ to be any linear combination of $\Factor$-monomials of $\Factor$-degree at most $D$.
\end{definition}

\begin{example} Let $\F$ have large characteristic.  If $\Factor$ is the degree 2 factor on $\F^5$ consisting of the four polynomials $X_1X_2 + X_3$, $X_1X_2 + X_4$, $X_2 + X_3$ and $X_1 + X_5$, where $X_1,\ldots,X_5$ are the coordinate functions, the polynomial $(X_1X_2 + X_3)(X_1 + X_5)^7 + (X_1 + X_2 + X_3 + X_5)^9$ has $\Factor$-degree 9, and so does $(X_3 - X_4)^4(X_2 + X_3)$, since $X_3 - X_4 \in \Span(\Factor_2)$.  
\end{example}

In the above example we saw that the $\Factor$-degree of a polynomial can exceed the ordinary degree due to dependencies among the polynomials in the factor.  The following theorem can be viewed as a converse to this phenomenon.

\begin{theorem}[Degree and $\Factor$-degree agree for regular factors]\label{repr}  Let $0 \leq d, D < |\F|$.  Then there exists a growth function $F$ \textup{(}depending on $d$ and $D$\textup{)} with the following property. Suppose that $P \in \mathcal{P}_D(\F^n)$ is measurable with respect to $\sigma(\Factor)$, where $\Factor$ is an $F$-regular factor of degree $d$ on $\F^n$. Then $P$ has $\Factor$-degree at most $D$.
\end{theorem}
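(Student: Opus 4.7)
My plan is to expand $P$ explicitly as a polynomial in the $P_{i,j}$ and use the $F$-regularity to show the expansion cannot involve any monomial of $\Factor$-degree exceeding $D$. For $F$ sufficiently rapid, Lemma \ref{atom-size} guarantees $\Phi: \F^n \to \Sigma$ is surjective, so I may write $P(x) = B(\Phi(x))$ for a unique polynomial $B(t) = \sum_\alpha b_\alpha t^\alpha$ with $0 \le \alpha_{i,j} < |\F|$; substituting yields
\[ P(x) = \sum_\alpha b_\alpha \prod_{i,j} P_{i,j}(x)^{\alpha_{i,j}}, \]
and the theorem reduces to showing $b_\alpha = 0$ whenever $\|\alpha\|_\Factor := \sum_{i,j} i\alpha_{i,j} > D$.

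Suppose for contradiction that $D^* := \max\{\|\alpha\|_\Factor : b_\alpha \neq 0\} > D$, and let $\tilde B(t) := \sum_{\|\alpha\|_\Factor = D^*} b_\alpha t^\alpha$, $Q(x) := \tilde B(\Phi(x))$. Every monomial in $P - Q$ has $\Factor$-degree $< D^*$ and hence ordinary degree $< D^*$; combined with $\deg P \le D < D^*$ this forces $\deg Q < D^*$, so $D_{h_1}\cdots D_{h_k} Q \equiv 0$ for all $k \ge D^*$. I will choose $k := \max(D^*, d+1)$, so that $k > d$ (enabling Proposition \ref{parallel-count}) and $k \ge D^*$, and restrict the resulting cube sum to a $D^*$-dimensional face $F \subseteq \{0,1\}^k$ to obtain
\[ \sum_{\omega \in F} (-1)^{D^* - |\omega|}\tilde B(\Phi(x + \omega\cdot h)) = 0 \qquad \text{for all } x \in \F^n,\ h \in (\F^n)^k. \]

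Applying Proposition \ref{parallel-count} with $F$ grown sufficiently quickly, the tuple $(\Phi(x+\omega\cdot h))_{\omega\in\{0,1\}^k}$ hits every element of $\Sigma_\Box$, so its projection to $F$ hits every admissible restriction $(t_\omega)_{\omega\in F}$: namely those in which each coordinate $\omega\mapsto t_{i,j}(\omega)$ is a polynomial on $F \cong \{0,1\}^{D^*}$ of degree at most $\min(i,D^*)$. Parameterizing such restrictions by free coefficients $c^{(i,j)}_S\in\F$ via $t_{i,j}(\omega)=\sum_S c^{(i,j)}_S\prod_{l\in S}\omega_l$, the cube sum becomes the coefficient of $\omega_1\cdots\omega_{D^*}$ in $\tilde B(t(\omega))$, which by multinomial expansion collapses to
\[ \sum_{\|\alpha\|_\Factor = D^*} b_\alpha \sum_\pi \prod_{(i,j,r)} c^{(i,j)}_{S^\pi_{i,j,r}}, \]
where $\pi$ ranges over partitions of $[D^*]$ labeled by triples $(i,j,r)$ with $1\le r\le\alpha_{i,j}$ and each part $S^\pi_{i,j,r}$ of size exactly $i$. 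Since each variable $c^{(i,j)}_S$ appears to degree at most $\max_\alpha\alpha_{i,j}<|\F|$, this identity in the $c$'s must hold as a formal polynomial identity over $\F$; isolating the monomial corresponding to a fixed labeled partition yields $b_{\alpha^*}\prod_{i,j}\alpha^*_{i,j}! = 0$, and the factorials are invertible in $\F$ since $|\F|$ is prime and $\alpha^*_{i,j}<|\F|$, forcing $b_{\alpha^*}=0$, a contradiction.

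The main obstacle I anticipate is the bookkeeping needed to verify that the projection $\pi_F(\Sigma_\Box)$ is exactly the space of admissible $(t_\omega)_{\omega\in F}$ described above, together with the combinatorics that isolates a single monomial $b_{\alpha^*}\prod\alpha^*_{i,j}!$ cleanly from the sum over labeled partitions. A secondary subtlety is the case $D^*\le d$, where one cannot naively take $k=D^*$; the padding trick of working inside a $(d+1)$-dimensional cube and alternating only over a $D^*$-dimensional face of it is what permits a uniform argument across both ranges of $D^*$.
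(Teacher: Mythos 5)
Your proof is correct and follows essentially the same route as the paper's: both substitute into the vanishing $D^*$-fold derivative of the top-weight part of the expansion $P=B(\Phi)$, invoke Proposition \ref{parallel-count} (plus surjectivity of $\Phi$ from Lemma \ref{atom-size}) to realise enough admissible tuples $(\Phi(x+\omega\cdot h))_\omega$, and extract the factorial coefficient $\prod_{i,j}\alpha^*_{i,j}!$, which is invertible since every $\alpha^*_{i,j}<|\F|$. The paper evaluates at one explicit admissible tuple $a_{i,j}(\omega)=\sum_t\prod_k\omega_{i,j,k,t}$ (which corresponds to setting precisely one of your variables $c^{(i,j)}_S$ to $1$ per part of a fixed partition) and checks by hand that the other terms vanish, whereas you derive the formal polynomial identity in the $c^{(i,j)}_S$ and isolate a single monomial --- morally the same computation; your padding to $k=\max(D^*,d+1)$ so that $k>d$ is a welcome precaution that the paper's invocation of Proposition \ref{parallel-count} with $k=|r|$ leaves implicit, since in principle one can have $|r|\leq d$.
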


\proof   Let $d,D$ be as above, let $F$ be a rapid growth function to be chosen later, and let $P, \Factor$ be as above.  Since $P$ is measurable with respect to $\sigma(\Factor)$, we have a representation
$$ P = B( P_{1,1},\ldots,P_{1,M_1},\ldots,P_{d,1},\ldots,P_{d,M_d} )$$
for some function $B: \Sigma \to \F$.  As $\F$ is a finite field, we can view $B$ as a polynomial of $\dim(\Factor)$ variables, which has individual degree at most $|\F|-1$ in each of the variables (note that all higher degrees can be eliminated since $x^{|\F|} = x$).  Thus we can write
\begin{equation}\label{pex}
 P = \sum_{ r \in R} c_r \prod_{i=1}^d \prod_{j=1}^{M_i} P_{i,j}^{r_{i,j}}
\end{equation}
where $R$ is the set of all tuples $r = (r_{i,j})_{1 \leq i \leq d; 1 \leq j \leq M_i}$, and the $c_r$ are coefficients in $\F$.  

For each tuple $r \in R$, we define the \emph{weight} $|r|$ of $r$ by the formula
$$ |r| := \sum_{i=1}^d i \sum_{j=1}^{M_i} r_{i,j}.$$
To prove the claim, it suffices to show that $c_r=0$ for all tuples $r$ with weight larger than $D$.  Suppose for contradiction that this is not the case.  Then we can find $r$ with $|r| > D$ such that $c_r \neq 0$; without loss of generality we may assume that $|r|$ is maximal with respect to this property.  From \eqref{pex}, we thus have
$$ P(x) = c_r \prod_{i=1}^d \prod_{j=1}^{M_i} P_{i,j}(x)^{r_{i,j}} 
+ \sum_{s \in R \backslash \{r\}: |s| \leq |r|}
c_s \prod_{i=1}^d \prod_{j=1}^{M_i} P_{i,j}(x)^{s_{i,j}}$$
for all $x \in \F^n$.  Since $P$ has degree $D < |r|$, its $|r|^{\th}$ order derivatives vanish.  Thus we have
\begin{align*}
0 &= c_r \sum_{ \omega \in \{0,1\}^{|r|} } (-1)^{|\omega|} \prod_{i=1}^d \prod_{j=1}^{M_i} P_{i,j}(x+\omega \cdot h)^{r_{i,j}} \\
&\quad + 
\sum_{s \in R \backslash \{r\}: |s| \leq |r|}
\sum_{ \omega \in \{0,1\}^{|r|} } (-1)^{|\omega|} 
c_s \prod_{i=1}^d \prod_{j=1}^{M_i} P_{i,j}(x+\omega \cdot h)^{s_{i,j}}
\end{align*}
for all $x \in \F^n$ and $h \in (\F^n)^{|r|}$.  

Now if $a = (a_{i,j}(\omega)) \in \Sigma^{\{0,1\}^{|r|}}$ satisfies the parallelelepiped constraints, and if $F$ grows sufficiently rapidly, then we know from Proposition \ref{parallel-count} that there are $x \in \F^n$ and $h \in (\F^n)^{|r|}$ such that $P_{i,j}(x+\omega \cdot h) = a_{i,j}(\omega)$ for all $i,j$ with $i \in [d]$ and $j \leq M_i$ and for all $\omega \in \{0,1\}^{|r|}$.  We thus conclude that
\begin{align*}
0 &= c_r \sum_{ \omega \in \{0,1\}^{|r|} } (-1)^{|\omega|} \prod_{i=1}^d \prod_{j=1}^{M_i} a_{i,j}(\omega)^{r_{i,j}} \\
&\quad + 
\sum_{s \in R \backslash \{r\}: |s| \leq |r|}
\sum_{ \omega \in \{0,1\}^{|r|} } (-1)^{|\omega|} 
c_s \prod_{i=1}^d \prod_{j=1}^{M_i} a_{i,j}(\omega)^{s_{i,j}}
\end{align*} for all $a \in \Sigma^{\{0,1\}^{|r|}}$ satisfying the parallelepiped constraints.  Thus, to obtain the desired contradiction, it will suffice to locate such an $a$ for which 
\begin{equation}\label{qbig}
\sum_{ \omega \in \{0,1\}^{|r|} } (-1)^{|\omega|} \prod_{i=1}^d \prod_{j=1}^{M_i} a_{i,j}(\omega)^{r_{i,j}} \neq 0
\end{equation}
but such that
\begin{equation}\label{qsmall}
\sum_{ \omega \in \{0,1\}^{|r|} } (-1)^{|\omega|} \prod_{i=1}^d \prod_{j=1}^{M_i} a_{i,j}(\omega)^{s_{i,j}} = 0
\end{equation}
for all $s \in R \backslash \{r\}$ with $|s| \leq |r|$.

We can do this explicitly as follows.  Let us parametrise $\{0,1\}^{|r|}$ as $\prod_{i=1}^d \prod_{j=1}^{M_i} (\{0,1\}^i)^{r_{i,j}}$, thus we write each $\omega \in \{0,1\}^{|r|}$ as 
$\omega_{i,j,k,t}$, where $1 \leq i \leq d$, $1 \leq j \leq M_i$, $1 \leq k \leq i$ and $1 \leq t \leq r_{i,j}$. Define $a \in \Sigma^{\{0,1\}^{|r|}}$ by
\[ a_{i,j}(\omega) := \sum_{t=1}^{r_{i,j}} \prod_{k=1}^i \omega_{i,j,k,t},\] where we embed $\{0,1\}$ into $\F$ in the obvious way. Since $a_{i,j}(\omega)$ is a linear combination of products of $i$ coordinates of $\omega$, it is easy to see that $a$ satisfies the parallelepiped constraints.

Let us now verify \eqref{qsmall}. For fixed $i,j$,  $a_{i,j}(\omega)$ depends only on the components lying in $(\{0,1\}^i)^{r_{i,j}}$, which are disjoint as $i,j$ vary. We can therefore factorise the left-hand side of \eqref{qsmall} (with a hopefully obvious notation) as
$$
\prod_{i=1}^d \prod_{j=1}^{M_i} \big(\sum_{\eta \in (\{0,1\}^i)^{r_{i,j}}} (-1)^{|\eta|}a_{i,j}(0,\dots,0,\eta,0,\dots,0)^{s_{i,j}}   \big),
$$
where the notation is supposed to suggest that $\eta$ is in the $i,j$-part of the product $\prod_{i=1}^{d} \prod_{j =1}^{M_i} (\{0,1\}^i)^{r_{i,j}}$.
On the other hand,
If $|s| \leq |r|$ and $s \neq r$, then from the pigeonhole principle there must be some $i \leq d$ and some $j \leq M_i$ such that $s_{i,j} < r_{i,j}$.  Fixing this $i,j$, it thus suffices to show that
\[ \sum_{\eta \in (\{0,1\}^i)^{r_{i,j}}} (-1)^{|\eta|}a_{i,j}(0,\dots,0,\eta,0,\dots,0)^{s_{i,j}} = 0.\]
But we observe that $a_{i,j}(\omega)^{s_{i,j}}$ is a linear combination of products of $i s_{i,j}$ coordinates of $\omega$, which is strictly less than $i r_{i,j}$, and the claim follows.

Now we verify \eqref{qbig}.  Performing the same factorisation as before, it suffices to show that
\begin{equation}\label{arj}
\sum_{\eta \in (\{0,1\}^i)^{r_{i,j}}} (-1)^{|\eta|}a_{i,j}(0,\dots,0,\eta,0,\dots,0)^{r_{i,j}} \neq 0
\end{equation}
for each $i,j$.  But $a_{i,j}(0,\dots,0,\eta,0,\dots,0)^{r_{i,j}}$ is equal to $r_{i,j}! \prod_{k=1}^i \prod_{t=1}^{r_{i,j}} \eta_{k,t}$ (viewed of course as an element of $\F$), plus several other monomials, none of which involve all of the $\eta_{k,t}$. From this we see that the left-hand side
of \eqref{arj} is simply $(-1)^{i r_{i,j}} r_{i,j}!$.  Since $r_{i,j} < |\F|$, this expression is non-zero in $\F$, as desired.
\endproof

Combining this theorem with Lemma \ref{regular} we immediately obtain the following corollary.

\begin{corollary}[Minimal-degree representation of polynomials]\label{repr-cor}  Let $1 \leq d, D < |\F|$, and let $F$ be a growth function.  Then whenever $P \in \mathcal{P}_D(\F^n)$ is measurable with respect to a factor $\Factor$ of order $d$ on $\F^n$, there exists an $F$-regular extension $\Factor'$ of $\Factor$ of order $d$ with $\dim(\Factor') \ll_{d,D,\dim(\Factor)} 1$ such that $P$ has $\Factor'$-degree at most $D$.
\end{corollary}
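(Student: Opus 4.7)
The plan is to deduce this corollary by combining the regularity lemma (Lemma \ref{regular}) with Theorem \ref{repr}, which is exactly what the prefatory sentence suggests. The input is an arbitrary factor $\Factor$ of degree $d$ and a polynomial $P \in \mathcal{P}_D(\F^n)$ measurable with respect to $\sigma(\Factor)$, and we must produce an $F$-regular extension $\Factor'$ of $\Factor$ (still of degree $d$) of bounded dimension inside which $P$ acquires controlled $\Factor'$-degree.

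First I would let $F_0$ be the growth function produced by Theorem \ref{repr} for the given parameters $d, D$, and set $\tilde F := \max(F, F_0)$, so that $\tilde F$ is itself a growth function (non-decreasing) which is sufficiently rapid both for our input requirement and for the hypothesis of Theorem \ref{repr}. Then I would invoke Lemma \ref{regular} with input factor $\Factor$ and growth function $\tilde F$, obtaining an $\tilde F$-regular extension $\Factor'$ of $\Factor$ of degree $d$ whose dimension is bounded by $O_{\tilde F, d, \dim(\Factor)}(1)$, and hence by $O_{d,D,\dim(\Factor)}(1)$ since $\tilde F$ depends only on $F, d, D$.

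Next I would observe that because $\Factor'$ is an extension of $\Factor$ in the sense of Definition -- i.e.\ $\sigma(\Factor')$ refines $\sigma(\Factor)$ -- any function measurable with respect to $\sigma(\Factor)$ is automatically measurable with respect to $\sigma(\Factor')$. In particular $P$, which we assumed measurable with respect to $\sigma(\Factor)$, is measurable with respect to $\sigma(\Factor')$. Since $\Factor'$ is $\tilde F$-regular and $\tilde F \geq F_0$, the hypotheses of Theorem \ref{repr} are met for the pair $(P, \Factor')$, and the theorem concludes that $P$ has $\Factor'$-degree at most $D$.

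There is essentially no obstacle here: the only subtlety worth guarding against is that one must enlarge the given $F$ to $\tilde F = \max(F, F_0)$ before applying the regularity lemma, so that the resulting $\Factor'$ is simultaneously $F$-regular (as demanded in the conclusion) and regular enough to enable Theorem \ref{repr}. The dimension bound then follows by chasing the dependence of $\tilde F$ through Lemma \ref{regular}.
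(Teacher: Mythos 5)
Your proof is correct and follows exactly the route the paper intends: the paper proves this corollary with the single sentence ``Combining this theorem with Lemma \ref{regular} we immediately obtain the following corollary,'' and you have correctly filled in the details, including the necessary step of replacing $F$ by $\tilde F = \max(F, F_0)$ so that the output of Lemma \ref{regular} is simultaneously $F$-regular and regular enough for Theorem \ref{repr} to apply. The only (shared) imprecision is that the dimension bound really is $O_{F,d,D,\dim(\Factor)}(1)$ rather than $O_{d,D,\dim(\Factor)}(1)$, but this matches the paper's own statement, where the dependence on the fixed growth function $F$ is understood.
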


\section{A nullstellensatz}

In this section we establish a kind of finite field analogue of Hilbert's Nullstellensatz. These results are not needed elsewhere in the paper, but are illustrative applications of the previous machinery, and may be of some independent interest.

\begin{proposition}[Nullstellensatz]  Let $k \geq 0$ and $0 \leq d < |\F|$, and let $P_1,\ldots,P_k \in \mathcal{P}_d(\F^n)$.  Let $Q \in \mathcal{P}_d(\F^n)$ be such that $Q$ vanishes whenever $P_1,\ldots,P_k$ all vanish.
Then there exist polynomials $R_1,\ldots,R_k$ of degree $O_{d,k}(1)$ such that
$$ Q(x) = P_1(x) R_1(x) + \ldots + P_k(x) R_k(x)$$
for all $x \in \F^n$.
\end{proposition}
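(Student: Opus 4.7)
The plan is to bypass the factor/regularity machinery entirely and produce the representation via an explicit Fermat-type identity. I define the polynomial
\[ \chi(x) := \prod_{i=1}^{k} \bigl(1 - P_i(x)^{|\F|-1}\bigr). \]
Since $a^{|\F|-1}$ equals $1$ for $a \in \F^{\times}$ and $0$ for $a = 0$, the function $\chi$ is the characteristic function (valued in $\F$) of the common vanishing locus $A_0 := \{x : P_1(x) = \cdots = P_k(x) = 0\}$. The hypothesis that $Q$ vanishes on $A_0$ is then equivalent to the pointwise identity $Q(x)\chi(x) = 0$, and so $Q = Q \cdot (1 - \chi)$ as functions on $\F^n$.

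The next step is purely combinatorial. Expanding the product gives
\[ 1 - \chi = \sum_{\emptyset \neq T \subseteq [k]} (-1)^{|T|+1} \prod_{i \in T} P_i^{|\F|-1}, \]
and for each nonempty $T$ I pick one index $i_0 = i_0(T) \in T$ and pull out a single factor of $P_{i_0}$ from the corresponding term. Collecting by the chosen index produces an explicit identity
\[ 1 - \chi(x) = \sum_{i=1}^{k} P_i(x)\, \tilde R_i(x), \qquad \deg \tilde R_i \leq d\bigl((|\F|-1)k - 1\bigr). \]
Multiplying through by $Q$ yields $Q = \sum_i P_i \cdot (Q \tilde R_i)$, so the polynomials $R_i := Q\tilde R_i$ have degree at most $d(|\F|-1)k$, which is $O_{d,k}(1)$ because $\F$ is fixed throughout the paper.

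There is no real obstacle here beyond the bookkeeping for degrees. The hypothesis $0 \leq d < |\F|$ is not essential to the Fermat identity itself, and merely ensures $\mathcal{P}_d(\F^n)$ behaves as the usual space of polynomials of ordinary degree at most $d$. If instead one wanted to make this section an honest illustration of the earlier machinery, the alternative would be to extend $\{P_1,\ldots,P_k\}$ to an $F$-regular factor $\Factor$ via Lemma \ref{regular}, invoke Lemma \ref{atom-size} to see that the atom $\{P_1 = \cdots = P_k = 0\}$ is non-empty and normal in size, append $Q$ to the factor, and apply Theorem \ref{repr} to rewrite $Q$ as a $\Factor$-polynomial of $\Factor$-degree at most $d$; separating out the terms divisible by some $P_i$ and using $Q \chi \equiv 0$ to kill the remaining purely $P_i$-free contribution would yield the claim by the same final multiplication trick. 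I would present the Fermat argument as the main proof, since it is self-contained and the regularity route detours around what is fundamentally an algebraic identity.
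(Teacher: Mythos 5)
Your Fermat-kernel proof is correct and takes a genuinely different, much more elementary route than the paper. The paper's proof for this proposition goes through the regularity machinery: it extends the factor generated by $P_1,\ldots,P_k,Q$ to an $F$-regular factor $\Factor'$ via Lemma \ref{regular}, invokes Lemma \ref{atom-size} to conclude that the evaluation map $\Phi' : \F^n \to \Sigma'$ is surjective, writes $P_i = p_i \circ \Phi'$ and $Q = q\circ\Phi'$, observes that surjectivity transfers the vanishing hypothesis down to $\Sigma'$, solves $q = \sum_i p_i r_i$ pointwise on the finite set $\Sigma'$, and pulls back by $\Phi'$, reading off the degree bound from $\dim\Sigma'$. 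Because Lemma \ref{regular} is involved, the resulting bound $O_{d,k,F}(1)$ is of Ackermann type. By contrast, your expansion of $1 - \prod_i\bigl(1 - P_i^{|\F|-1}\bigr)$ yields the explicit and very modest bound $\deg R_i \leq d(|\F|-1)k$, and it uses neither the standing hypothesis $d<|\F|$ nor the assumption $Q\in\mathcal{P}_d(\F^n)$; it is therefore strictly more general and quantitatively far better. The only thing the regularity route buys here is its advertised role as an illustration of the earlier machinery, and as a warm-up for Proposition \ref{exact-nullstellensatz}, whose sharper conclusion $\deg R_{i,j} \leq k - i$ does genuinely require $\Factor$-regularity and Theorem \ref{repr}, and is not accessible from the Fermat expansion. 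Your alternative sketch at the end is in the right spirit but slightly more elaborate than the paper's actual argument for this proposition: the paper does not invoke Theorem \ref{repr} here, nor does it need to isolate $P_i$-free terms, since it simply solves for the $r_i$ pointwise on $\Sigma'$.
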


\proof Let $\Factor$ be the degree $d$ factor defined by the polynomials $P_1,\ldots,P_k,Q$.  Let $F$ be a growth function to be chosen later.  By Lemma \ref{regular}, we can extend $\Factor$ to an $F$-regular factor $\Factor'$ of order $d$ and dimension $O_{d,k,F}(1)$.  If $F$ is sufficiently rapid, then by Lemma \ref{atom-size} we see that the configuration map $\Phi' : \F^n \rightarrow \Sigma'$ corresponding to $\Factor'$ is surjective.  Since $P_1,\ldots,P_k,Q$ are measurable with respect to $\sigma(\Factor')$, we can write $P_i = p_i \circ \Phi'$ and $Q = q \circ \Phi'$ for some $p_i, q: \Sigma' \to \F$. Our assumption together with the surjectivity of $\Phi'$ implies that if $z \in \Sigma'$ is such that $p_i(z) = 0$ for $i=1,\dots,k$ then $q(z) = 0$.
By working on each point $z$ separately, one can therefore find functions $r_1,\ldots,r_k: \Sigma' \to \F$ such that
$$ q(z) = p_1(z) r_1(z) + \ldots + p_k(z) r_k(z)$$
for all $z \in \Sigma'$.
Composing with $\Phi'$ we conclude that
$$ Q(x) = P_1(x) R_1(x) + \ldots + P_k(x) R_k(x)$$
for all $x \in \F^n$, where $R_i := r_i \circ \Phi'$.
As $\Sigma'$ has dimension $O_{d,k,F}(1)$, one can view $r_1,\ldots,r_k$ as polynomials of degree $O_{d,k,F}(1)$, and so $R_1,\ldots,R_k$ are also polynomials of degree $O_{d,k,F}(1)$.  The claim follows.
\endproof

In the above result the polynomials $R_i$ had bounded degree. However, if the polynomials $P_1,\dots,P_k$ arose from a sufficiently regular factor, one can get the sharp degree bound for $R_i$, namely $\deg(R_i) = \deg(Q) - \deg(P_i)$.

\begin{proposition}[Exact nullstellensatz]\label{exact-nullstellensatz}  Let $D, d, k \geq 0$.  Then there exists a growth function $F$ \textup{(}depending on $D,d,k$\textup{)} with the following property: given any $F$-regular factor $\Factor$ of order $d$ and dimension at most $D$ on $\F^n$, and given any $Q \in \mathcal{P}_k(\F^n)$ which vanishes whenever the polynomials $P_{i,j}$ defining $\Factor$ all vanish,
there exist polynomials $R_{i,j} \in \mathcal{P}_{k-i}(\F^n)$ for all $i \leq \min(d,k)$ and $j \leq M_i$ such that
$$ Q(x) = \sum_{i=1}^{\min(d,k)} R_{i,j}(x)P_{i,j}(x)$$
for all $x \in \F^n$.
\end{proposition}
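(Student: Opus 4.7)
My plan is to regularize the situation using Lemma \ref{regular}, transfer the problem to a bounded-dimensional configuration space via Theorem \ref{repr}, and then solve a weighted nullstellensatz there.

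Specifically, I would fix a rapidly growing function $F^*$ (to be chosen at the end) and use Lemma \ref{regular} to extend the factor obtained by adjoining $Q$ to $\Factor$ (placing $Q$ in degree $k$, so the resulting factor has degree $\max(d,k)$) to an $F^*$-regular factor $\Factor^*$ of bounded dimension. By Lemma \ref{atom-size} the evaluation map $\Phi^* \colon \F^n \to \Sigma^*$ is surjective. Since $Q$ and each $P_{i,j}$ are $\sigma(\Factor^*)$-measurable, I can write $Q = q \circ \Phi^*$ and $P_{i,j} = p_{i,j} \circ \Phi^*$ for polynomials $q, p_{i,j}$ on $\Sigma^*$, and Theorem \ref{repr} (assuming $\max(d,k) < |\F|$) bounds the $\Factor^*$-weight of $q$ by $k$ and of each $p_{i,j}$ by $i$.

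The vanishing of $Q$ on the common zero set $V$, combined with the surjectivity of $\Phi^*$, implies that $q$ vanishes on the analogous common zero set of the $p_{i,j}$ inside $\Sigma^*$. The problem thus reduces to finding $r_{i,j} \colon \Sigma^* \to \F$ of $\Factor^*$-weight at most $k-i$ with $q = \sum_{i,j} r_{i,j} p_{i,j}$ on $\Sigma^*$; pulling back then yields $R_{i,j} := r_{i,j} \circ \Phi^*$ of degree at most $k-i$ on $\F^n$ (via a further appeal to Theorem \ref{repr} matching degree on $\F^n$ with $\Factor^*$-weight) and the desired identity $Q = \sum R_{i,j} P_{i,j}$. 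To construct such $r_{i,j}$ I would proceed by induction on $k$: the base case $k=0$ is immediate since the common zero set is non-empty, and in the inductive step I would start from a representation $q = \sum s_{i,j} p_{i,j}$ of a priori high weight, furnished by the non-exact nullstellensatz proved earlier in this section, and then iteratively prune the top-weight contribution of each $s_{i,j}$.

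The main obstacle lies in this pruning step. Because of the relations $y^{|\F|} = y$ on $\Sigma^*$, the $\Factor^*$-weight is only a filtration and not a grading, so naively truncating the $s_{i,j}$ need not preserve the identity. The crucial input required is a ``regular sequence'' property: if $F^*$ grows sufficiently rapidly, then the only syzygies $\sum t_{i,j} p_{i,j} = 0$ of bounded $\Factor^*$-weight should be generated by the trivial Koszul-type relations $p_{i',j'} \cdot p_{i,j} - p_{i,j} \cdot p_{i',j'} = 0$. Granted this, every top-weight cancellation that obstructs the truncation can be realised as a Koszul combination and subtracted off, strictly lowering the maximum weight by at least one; iteration then terminates at weight $k$. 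Establishing this regular-sequence property would be the technical core, and should follow from Proposition \ref{parallel-count} applied at very high regularity, together with an inductive use of Theorem \ref{wtn}.
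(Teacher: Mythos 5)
Your broad architecture (regularize, pass to configuration space via surjectivity, use Theorem \ref{repr} to convert degree to weight, solve a weighted nullstellensatz on $\Sigma^*$, pull back) matches the paper's, but there is a crucial missing ingredient that makes your version of the configuration-space problem genuinely hard rather than trivial: you regularize with Lemma \ref{regular} instead of Lemma \ref{regular-1}.

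When you apply the ordinary regularity lemma to the factor $\Factor \cup \{Q\}$, the resulting $\Factor^*$ is a refinement only in the $\sigma$-algebra sense; the original $P_{i,j}$ need not survive as coordinate functions on $\Sigma^*$ (the regularization procedure may have decomposed them). Consequently your $p_{i,j} \colon \Sigma^* \to \F$ are \emph{arbitrary} polynomials, and the weighted nullstellensatz for arbitrary $p_{i,j}$ is a substantive algebraic problem. Your proposed remedy --- an induction on weight using a ``regular sequence'' property asserting that all syzygies of bounded weight are Koszul --- is not established anywhere in the paper, is not a consequence of Proposition \ref{parallel-count} or Theorem \ref{wtn} in any obvious way, and would require a nontrivial argument in its own right; moreover, the filtration-not-grading issue you flag is a real obstacle that the Koszul step does not clearly overcome. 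The paper sidesteps all of this by using Lemma \ref{regular-1} (the \emph{relative} regularity lemma), which is designed precisely so that the extension $\Factor''$ of $\Factor$ is \emph{non-disruptive}: $\Factor_i \subseteq \Factor''_i$, so each $P_{i,j}$ ($j \leq M_i$) literally becomes a coordinate function $t_{i,j}$ on $\Sigma''$. The hypothesis ``$Q$ vanishes on $\{P_{i,j}=0\}$'', combined with surjectivity of $\Phi''$, then says $q$ vanishes on the coordinate subspace $W = \{t_{i,j}=0 : j \leq M_i\}$ of $\Sigma''$. For a coordinate subspace the nullstellensatz with the sharp degree bound is immediate: since all exponents in $q$ are $< |\F|$, Lagrange interpolation forces every monomial of $q$ to contain some $t_{i,j}$ with $j \leq M_i$, and factoring out that coordinate costs weight exactly $i$, giving $r_{i,j}$ of weight $\leq k-i$ with no syzygy analysis needed. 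Without the non-disruptive extension this reduction is unavailable, so the gap is not cosmetic.
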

Before embarking on the proof, we give a technical generalisation of the regularity lemma, Lemma \ref{regular}. Let us say that an extension $\Factor'$ of a factor $\Factor$ of order $d$ is \emph{non-disruptive} if we have $\Factor_i \subseteq \Factor'_i$ for all $i = 1,\dots,d$.  Clearly if $\Factor'$ is a non-disruptive extension of $\Factor$ and $\Factor'$ is $F$-regular, then $\Factor$ must also be $F$-regular.  Our next lemma can be regarded as a kind of converse to this fact.

\begin{lemma}[Relative regularity lemma]\label{regular-1} Let $d, D \geq 1$ and let $F$ be a growth function.  Then there exists a growth function $\tilde F$ such that whenever $\Factor$ is a $\tilde F$-regular factor of order $d$ on $\F^n$, and $\Factor'$ is an extension of $\Factor$ of dimension at most $D$, there exists an $F$-regular extension $\Factor''$ of $\Factor'$ with the dimension bound
\begin{equation}\label{dimsoc}
 \dim(\Factor'') \ll_{F,d,D} 1
\end{equation}
such that $\Factor''$ is a non-disruptive extension of $\Factor$.
\end{lemma}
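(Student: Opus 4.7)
The plan is to emulate the proof of the regularity lemma (Lemma \ref{regular}) applied to the union $\Factor \cup \Factor'$, but with the additional invariant that we never pivot on a polynomial drawn from $\Factor$ itself. The growth function $\tilde F$ will be chosen large enough that the $\tilde F$-regularity of $\Factor$ rules out any low-rank relation that would force such a forbidden pivot.

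First I would note that the hypotheses implicitly bound $\dim(\Factor)$ in terms of $D$. Since Theorem \ref{wtn} and hence Lemma \ref{atom-size} are available up to degree $d$, taking $\tilde F$ to grow at least as fast as Lemma \ref{atom-size} requires makes the atoms of $\sigma(\Factor)$ nearly equidistributed, so the image of $\Phi$ has size $(1+o(1))|\F|^{\dim(\Factor)}$; but these atoms are refined by the atoms of $\sigma(\Factor')$, of which there are at most $|\F|^D$, forcing $\dim(\Factor) \leq D + O(1)$. Set $\Factor''_0 := \Factor \cup \Factor'$, a factor of dimension $O(D)$ that by construction is non-disruptive of $\Factor$ and still extends $\Factor'$ at the level of $\sigma$-algebras.

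Now iterate. At each step $t$, if $\Factor''_t$ is not $F$-regular, locate $i \in [d]$ and a non-trivial combination $Q = \sum_j c_{i,j} P''_{i,j} \in \Span(\Factor''_{t,i})$ with $\rank_{i-1}(Q) < F(\dim(\Factor''_t))$, and write $Q = B(R_1,\ldots,R_k)$ with $k < F(\dim(\Factor''_t))$ and each $R_\ell$ of degree less than $i$. Choose $\tilde F$ so that $\tilde F(\dim(\Factor)) > F(M)$ for every dimension $M \leq M_0(F,d,D)$ that the procedure might encounter. Under this choice the $\tilde F$-regularity of $\Factor$ forbids $Q$ from being supported purely on polynomials of $\Factor_i$, so some $j_0$ with $c_{i,j_0} \neq 0$ has $P''_{i,j_0} \notin \Factor$; pivot on this $j_0$ by removing $P''_{i,j_0}$ from $\Factor''_t$ and adjoining $R_1,\ldots,R_k$ to the lower-degree slots. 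The new $\Factor''_{t+1}$ still contains $\Factor$, still refines $\sigma(\Factor')$ (the removed $P''_{i,j_0}$ is recoverable from what remains), and has a dimension vector of $\Factor''_{t+1} \setminus \Factor$ strictly smaller than that of $\Factor''_t \setminus \Factor$ in the reverse lexicographic order on $\Z_+^d$.

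Well-orderedness of the reverse lexicographic order (type $\omega^d$) forces termination exactly as in Lemma \ref{regular}, with a final dimension bound $M_0(F,d,D)$ depending only on $F$, $d$, $D$. The principal obstacle, and the only place where the argument does anything beyond what Lemma \ref{regular} already does, is the calibration of $\tilde F$: one first extracts $M_0(F,d,D)$ from a run of the algorithm that makes no reference to $\tilde F$, and then defines $\tilde F(D+C)$ to exceed $F(M_0(F,d,D))$ for a suitable absolute constant $C$ absorbing the preliminary bound $\dim(\Factor) \leq D + O(1)$. Since $M_0$ is independent of $\tilde F$, this closes the recursion with no circularity.
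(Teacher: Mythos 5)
Your plan is correct and, in its main thrust, identical to the paper's: run the refinement algorithm of Lemma \ref{regular} on the union $\Factor\cup\Factor'$, induct on the dimension vector in reverse lexicographic order, and observe that if $\tilde F$ is rapid enough then any low-rank combination $Q\in\Span(\Factor''_{t,i})$ that the algorithm finds cannot be supported solely on the polynomials of $\Factor_i$ (that would contradict the $\tilde F$-regularity of $\Factor$), so one may always pivot on a polynomial outside $\Factor$ and the extension stays non-disruptive; you also resolve the circularity between $\tilde F$ and the terminal dimension $M_0(F,d,D)$ exactly as the paper implicitly does, by noting that $M_0$ comes purely from the well-ordering argument and is independent of $\tilde F$. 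The one place you take a genuinely different and heavier road is the preliminary bound $\dim(\Factor)\ll_D 1$: you invoke near-equidistribution of atoms via Lemma \ref{atom-size}, which imports Theorem \ref{wtn} (and hence silently restricts to $d<|\F|$) and controls the atom count only in the limit $n\to\infty$. The paper's derivation is elementary and unconditional: since $\sigma(\Factor')$ refines $\sigma(\Factor)$, every $P_{i,j}$ is $\sigma(\Factor')$-measurable and hence lies in a vector space of $\F$-dimension at most $|\F|^D$, while $\tilde F$-regularity (for any $\tilde F\geq 2$) forces the $P_{i,j}$ to be linearly independent, giving $\dim(\Factor)\leq|\F|^D$ for all $d$ and all $n$. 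One further small care: $\dim(\Factor)$ may be small and $\tilde F$ is only required to be non-decreasing, so the condition you want is that $\tilde F$ exceed $F(M_0(F,d,D))$ already at small inputs --- e.g.\ $\tilde F(1)>F(M_0)$, or simply take $\tilde F$ constant equal to $F(M_0)+1$ --- rather than imposing the inequality only at the single point $D+C$.
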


\proof   Fix $d,F$, and let $\tilde F$ be a sufficiently rapid growth function to be chosen later. First observe that as the polynomials in $\Factor$ are $\Factor'$-measurable, we have the crude bound $\dim(\Factor) \ll_{D}1$, and so we may allow our constants to depend on $\dim(\Factor)$ also.

By replacing $\Factor'_i$ with $\Factor'_i \cup \Factor_i$ for $1 \leq i \leq d$ if necessary (and increasing $D$ accordingly) we may assume that $\Factor'$ is a non-disruptive extension of $\Factor$.  We now keep $\Factor$ fixed and induct on the dimension vector $(\dim(\Factor'_1),\ldots,\dim(\Factor'_d))$ of $\Factor'$ in exactly the same way as in Lemma \ref{regular} in order to obtain an $F$-regular extension $\Factor''$ of $\Factor'$ obeying
\eqref{dimsoc}.  The key point is that the low-rank polynomials $Q_i$ which arise in the proof of Lemma \ref{regular} can never arise from $\Factor_i$ if $\tilde F$ is chosen sufficiently rapid (thanks to \eqref{dimsoc}).   Because of this, we can easily arrange that the extension $\Factor''$ appearing in the proof of Lemma \ref{regular} continues to be a non-disruptive extension of $\Factor$, and the claim easily follows.
\endproof

\emph{Proof of Proposition \ref{exact-nullstellensatz}.}
Fix $D,d,k \geq 0$. By adding dummy polynomials to $\Factor$ and enlarging $d$ if necessary we may assume that $d \geq k$. Let $F_1$ be a growth function depending on $D,d,k$ to be chosen later, and let $F$ be an even more rapid growth function depending on $D,d,k,F_1$ and also to be chosen later.

Let $\Factor, Q$ be as in the statement of the proposition.  Let $\Factor' = (P_{i,j})_{i \in [d], j \leq M'_i}$ be the factor of order $d$ formed by adjoining $Q$ to $\Factor$.  Applying Lemma \ref{regular-1}, we see (if $F$ is sufficiently rapid depending on $D,d,k,F_1$) that we can find an $F_1$-regular extension $\Factor'' = (P_{i,j})_{i \in [d], j \leq M''_i}$ of $\Factor$ of order $\max(d,k)$ which is a non-disruptive extension of $\Factor$.  Applying Theorem \ref{repr}, we conclude (if $F_1$ is sufficiently rapid depending on $D,d,k$) that $Q$ has $\Factor''$-degree at most $k$. Using the identity $x^{|\F|} = x$ to eliminate all exponents greater than or equal to $|\F|$, we have a representation $Q(x) = q(\Phi''(x))$ for all $x \in \F^n$, where $q: \Sigma'' \to \F$ is a polynomial which takes the form
\begin{equation}\label{qex}
 q(t) := \sum_{ s \in S_k } c_s \prod_{i=1}^d \prod_{j=1}^{M''_i} t_{i,j}^{s_{i,j}} 
 \end{equation}
where $c_s \in \F$ for all $s \in S_k$, and $S_k$ is the collection of all tuples $( s_{i,j} )_{1 \leq i \leq d; 1 \leq j \leq M''_i}$ of non-negative integers $0 \leq s_{i,j} < |\F|$ obeying the weight condition  
$$ \sum_{i=1}^d \sum_{j=1}^{M''_i} i s_{i,j} \leq k.$$

By hypothesis, $Q(x)$ vanishes whenever all the $P_{i,j}(x)$ vanish for $i = 1,\dots,d$ and $j \leq M_i$.  On the other hand, by Lemma \ref{atom-size} we see (if $F_1$ is sufficiently rapid) that $\Phi'' : \F^n \rightarrow \Sigma''$ is surjective.  We conclude that $q$ vanishes on the coordinate subspace
$$ W := \{ t \in \Sigma'': t_{i,j} = 0 \hbox{ for all } i = 1,\dots,d \hbox{ and } j \leq M'_i \}.$$
Restricting $q$ to $W$ and then equating coefficients (recalling from the Lagrange interpolation formula that the coefficients are uniquely determined as long as all exponents are less than $|\F|$) we conclude that $c_s$ vanishes for each $s \in S$ such that $s_{i,j} = 0$ for all $i,j$ with $i \leq d$ and $j \leq M_i$.  From this, we can easily obtain a representation of the form
$$ q(t) = \sum_{i=1}^d \sum_{j=1}^{M_i} t_{i,j} r_{i,j}(t)$$
where each $r_{i,j}$ has weighted degree at most $k-i$ in the sense that it can be expanded into monomials as in \eqref{qex} but using only exponents from $S_{k-i}$ rather than all of $S_k$.  In particular $r_{i,j}$ must vanish for $i > k$.  Substituting $t = \Phi''(x)$ we obtain the claim.\endproof

\section{The counterexample}\label{counter-sec}

In this section we analyse the counterexample to the inverse conjecture for the Gowers norms in characteristic two by proving Theorem \ref{main-1}. Recall what is claimed in that theorem: the elementary symmetric quartic
\[ S_4(x) = \sum_{1 \leq i_1 < i_2 < i_3 < i_4 \leq n} x_{i_1}x_{i_2}x_{i_3}x_{i_4}\] is such that $f(x) = (-1)^{S_4(x)}$ has large $U^4$-norm on $\F_2^n$, but this function does not correlate well with any cubic phase.

We begin by establishing that the $U^4$-norm of this function is large.
Define the symmetric bilinear form $B: \F_2^n \times \F_2^n \to \F_2$ by
\begin{equation}\label{bab}
B( a, b ) := \sum_{1 \leq i,j \leq n: i \neq j} a_i b_j 
\end{equation}
for $a = (a_1,\ldots,a_n), b = (b_1,\ldots,b_n)$ in $\F_2^n$. One readily verifies\footnote{For a generalisation of this identity, see Lemma \ref{symderiv} below.} the identity
\begin{align}\nonumber
D_a D_b D_c D_d S_4(x) & = \sum_{1 \leq i,j,k,l \leq n: i,j,k,l\; \mbox{\scriptsize distinct }} a_i b_j c_k d_l \\ & = B(a,b) B(c,d) + B(a,c) B(b,d) + B(a,d) B(b,c),\label{qident}
\end{align}
and so
\begin{equation}\label{u4a} \Vert f \Vert_{U^4}^4 = 
\E_{a,b,c,d \in \F_2^n} (-1)^{B(a,b) B(c,d) + B(a,c) B(b,d) + B(a,d) B(b,c)}.
\end{equation}
To compute this quantity, we will need to look at the distribution of the sextuplet 
\begin{equation}\label{q6}
B_6(a,b,c,d) := (B(a,b), B(a,c), B(a,d), B(b,c), B(b,d), B(c,d)) \in \F_2^6
\end{equation}
as $a,b,c,d$ vary in $\F_2^n$.  This distribution can be controlled by standard Gauss sum estimates such as the following (cf. also Lemma \ref{gauss}).

\begin{lemma}[Gauss sum estimate]  For any $\xi_{ab}, \xi_{ac}, \xi_{ad}, \xi_{bc}, \xi_{bd}, \xi_{cd} \in \F_2$, not all zero, we have
$$ \E_{a,b,c,d \in \F_2^n} (-1)^{\xi_{ab} B(a,b) + \xi_{ac} B(a,c) + \xi_{ad} B(a,d) + \xi_{bc} B(b,c) + \xi_{bd} B(b,d) + \xi_{cd} B(c,d)} = O(2^{-n/2}).$$
\end{lemma}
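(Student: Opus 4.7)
My approach is to perform a triangular change of variables that isolates the nontrivial cross-term $B(a,b)$ and reduces the sum to the product of an explicit two-variable Gauss sum with a factor depending only on $c,d$.

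First, by relabeling the four variables (which only permutes the coefficients $\xi_{\bullet\bullet}$ and leaves the sum invariant) I may assume without loss of generality that $\xi_{ab} = 1$. Writing $P(a,b,c,d)$ for the exponent, two applications of the bilinearity and symmetry of $B$ should yield the identity
\[ P(a,b,c,d) \;=\; B\bigl(a + \xi_{bc}c + \xi_{bd}d,\; b + \xi_{ac}c + \xi_{ad}d\bigr) \;+\; R(c,d), \]
for some quadratic form $R$ in $c,d$ alone (which I will not need to write down explicitly). Setting $a' := a + \xi_{bc}c + \xi_{bd}d$ and $b' := b + \xi_{ac}c + \xi_{ad}d$, the map $(a,b,c,d) \mapsto (a',b',c,d)$ is triangular hence an invertible change of coordinates on $\F_2^{4n}$, and the expectation factors as $\E_{a,b,c,d}(-1)^{P} = \E_{a',b'}(-1)^{B(a',b')} \cdot \E_{c,d}(-1)^{R(c,d)}$. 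Since the second factor is at most $1$ in absolute value, it will suffice to establish the single Gauss estimate $|\E_{a,b \in \F_2^n}(-1)^{B(a,b)}| \ll 2^{-n/2}$.

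For this final step I would use the identity $B(a,b) = \ell(a)\ell(b) + \langle a,b\rangle = \langle b,\; a + \ell(a)\mathbf{1}\rangle$ in $\F_2$, where $\ell(x) := x_1 + \ldots + x_n$ and $\mathbf{1} := (1,\ldots,1)$. Summing over $b$ first by orthogonality of characters, the inner sum vanishes unless $a = \ell(a)\mathbf{1}$, an equation with at most two solutions ($a = 0$, and also $a = \mathbf{1}$ when $n$ is odd), giving $|\E_{a,b}(-1)^{B(a,b)}| \leq 2^{1-n}$ --- comfortably stronger than the required $O(2^{-n/2})$. There is no significant obstacle; the cross-term cancellations in characteristic $2$ go through cleanly precisely because $B$ is symmetric, so the bound follows from an explicit Gauss-sum computation without invoking Theorem \ref{wtn} or Lemma \ref{gauss}.
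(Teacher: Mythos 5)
Your proof is correct, and it takes a genuinely different route from the paper's.

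The paper's argument fixes $c,d$, writes the $(a,b)$-sum as $\E_{a,b}(-1)^{B(a,b)+L(a)+L'(b)}$ for linear forms $L,L'$, applies Cauchy--Schwarz to eliminate the $(-1)^{L'(b)}$ factor, differences in $a$, and then reduces to estimating the probability that a random $c$ lies in the left kernel of $B$ --- which is small because $B$ has rank $n-O(1)$. This gives $O(2^{-n/2})$, with the square-root loss coming precisely from the Cauchy--Schwarz step. Your argument instead completes the square: the triangular substitution $a' = a + \xi_{bc}c + \xi_{bd}d$, $b' = b + \xi_{ac}c + \xi_{ad}d$ (which works precisely because $B$ is symmetric, as you note) decouples the exponent into $B(a',b') + R(c,d)$, so the expectation factorizes and only the pure Gauss sum $\E_{a,b}(-1)^{B(a,b)}$ needs to be estimated. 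Your explicit computation of that sum --- writing $B(a,b) = \langle b, a + \ell(a)\mathbf{1}\rangle$ and invoking orthogonality in $b$ --- amounts to identifying the kernel of the matrix $J+I$ over $\F_2$, which has size at most $2$, and yields the sharper bound $O(2^{-n})$ with no Cauchy--Schwarz loss. Both proofs are sound; yours is more explicit and gives a better exponent, while the paper's is more robust (it only uses the rank of $B$, not its precise form) and perhaps more in keeping with the Gauss-sum/Cauchy--Schwarz toolkit used throughout the rest of the paper. All the steps in your write-up check out: the WLOG reduction to $\xi_{ab}=1$ by relabeling, the completing-the-square identity, the measure-preserving triangular change of coordinates, and the count of solutions to $a = \ell(a)\mathbf{1}$.
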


\proof   By symmetry we may assume $\xi_{ab} = 1$.  It suffices to show that
$$ \E_{a,b \in \F_2^n} (-1)^{B(a,b) + \xi_{ac} B(a,c) + \xi_{ad} B(a,d) + \xi_{bc} B(b,c) + \xi_{bd} B(b,d) + \xi_{cd} B(c,d)} = O( 2^{-n/2} )$$
uniformly in $c,d \in \F_2^n$.  But if we fix $c,d$, we can write the left-hand side as
$$ \E_{a,b \in \F_2^n} (-1)^{B(a,b) + L(a) + L'(b)}$$
for some $L, L' \in \mathcal{P}_1(\F_2^n)$.  Applying Cauchy-Schwarz to eliminate the $(-1)^{L'(b)}$ factor, we can estimate this quantity in absolute value by
$$ |\E_{a,a',b \in \F_2^n} (-1)^{B(a,b) - B(a',b) + L(a)-L(a')}|^{1/2};$$
writing $c := a-a'$ this becomes
$$ |\E_{c,b \in \F_2^n} (-1)^{B(c,b) + L(c)}|^{1/2}.$$
Performing the $c$ average using Fourier analysis and using the triangle inequality, we can bound this by
$$ |\P_{c \in \F_2^n}( B(c,b) = 0 \hbox{ for all } b \in \F_2^n )|^{1/2}.$$
But $B$ has rank $n-O(1)$, and so 
$$ \P_{c \in \F_2^n}( B(c,b) = 0 \hbox{ for all } b \in \F_2^n ) = O( 2^{-n} ).$$
The claim follows.
\endproof

From this lemma and Fourier analysis on $\F_2^6$ (as in the proof of Lemma \ref{atom-size}) we see that $B_6$ is equidistributed in the sense that
$$ \P_{a,b,c,d \in \F_2^n}(B_6(a,b,c,d) = q) = 2^{-6} + O(2^{-n}) \hbox{ for all } q \in \F_2^6.$$
It follows that \eqref{u4a} can be rewritten as
$$ \E_{q_{ab}, q_{ac}, q_{ad}, q_{bc}, q_{bd}, q_{cd} \in \F_2} (-1)^{q_{ab} q_{cd} + q_{ac} q_{bd} + q_{ad} q_{bc}} + O(2^{-n}).$$
But we can factorise the expectation and rewrite this expression as
$$ (\E_{q,q' \in \F_2} (-1)^{qq'})^3 + O(2^{-n/2}).$$
Since $\E_{q,q' \in \F_2} (-1)^{qq'} = \frac{1}{2}$, it follows that $\Vert f \Vert_{U^4}^4 = \frac{1}{8} + O(2^{-n})$ as asserted in \eqref{u4} of Theorem \ref{main-1}.

Now we turn to \eqref{u4b}, which asserts that $f$ does not have substantial correlation with a cubic phase. Let us remind the reader once more that a better bound is contained in the independent work of Lovett, Meshulam and Samorodnitsky \cite{lms}. Our bound is all but contained in Alon and Beigel \cite[Theorem 7]{beigel}, although we recall that argument here for the convenience of the reader.

If $x = (x_1,\ldots,x_n) \in \F_2^n$, let $|x|$ denote the number of indices $i \in [n]$ for which $x_i = 1$. It is clear that $S_d(x) = \binom{|x|}{d} \md{2}$. Recalling Lucas' theorem on binomial coefficients $\md{p}$, which states that
\begin{equation}\label{lucas}
\binom{a}{b} \equiv \binom{a_0}{b_0} \dots \binom{a_k}{b_k} \md{p}
\end{equation}
whenever $a = a_0 + a_1 p + a_2 p^2 + \dots + a_kp^k$ and $b = b_0 + b_1p + b_2p^2 + \dots + b_kp^k$ with $0 \leq a_i, b_i < p$, 
we see that
\begin{align*}
S_0(x) &= 0 \\
S_1(x) &= 1 \hbox{ iff } |x| \equiv 1 \md{2} \\
S_2(x) &= 1 \hbox{ iff } |x| \equiv 2,3 \md{4} \\
S_3(x) &= 1 \hbox{ iff } |x| \equiv 3 \md{4} \hbox{ and } \\
S_4(x) &= 1 \hbox{ iff } |x| \equiv 4,5,6,7 \md{8}.
\end{align*}
On the other hand we have, by a technique once known\footnote{One can also interpret this computation as exhibiting (by the usual Fourier-analytic method) the exponential mixing rate of a simple random walk on $\Z/8\Z$.} as ``multisection of series'',
\begin{align*}
\P_{x \in \F_2^n}(|x| \equiv a \md{8}) &= 2^{-n}\!\!\!\!\sum_{j \equiv a \mdsub{8}} \binom{n}{j} \\
&= \frac{1}{8} \sum_{r = 0}^7 e^{-2\pi i ra/8}\big( \frac{1 + e^{2\pi i r/8}}{2}\big)^n \\
&= \frac{1}{8} + O(2^{-\Omega(n)}).
\end{align*}
From these facts and some computation we easily conclude that
$$ \E_{x \in \F_2^n} (-1)^{S_4(x) + c_3 S_3(x) + c_2 S_2(x) + c_1 S_1(x) + c_0 S_0}  = O(2^{-\Omega(n)})$$
for all coefficients $c_0,c_1,c_2,c_3 \in \F_2$.  Clearly this immediately implies that
\begin{equation}\label{base}
\E_{x \in\F_2^n}(-1)^{S_4(x) + c_3 S_3(x) + c_2 S_2(x) + c_1 S_1(x) + c_0 S_0 - Q_0}  = O(2^{-\Omega(n)})
\end{equation}
whenever $Q_0 \in \mathcal{P}_0(\F_2^n)$ and $c_0,c_1,c_2,c_3 \in \F_2$.

Now suppose instead that $Q_1 \in \mathcal{P}_1(\F_2^n)$ and $c_0,c_1,c_2,c_3 \in \F_2$, and consider the average
\begin{equation}\label{efn}
 \E_{x \in \F_2^n} (-1)^{S_4(x) + c_3 S_3(x) + c_2 S_2(x) + c_1 S_1(x) + c_0 S_0 - Q_1} .
\end{equation}
Then we can write 
$$ Q_1(x) = \sum_{i \in E} x_i + Q_0(x)$$
for some $Q_0 \in \mathcal{P}_0(\F_2^n)$ and some set $E \subset \{1,\ldots,n\}$.  We can thus find a set $I \subseteq \{1,\ldots,n\}$ of size $m := \lfloor \frac{n}{2} \rfloor$ which either lies in $E$, or is disjoint from $E$.  By permuting the coefficients we can write $I = \{1,\ldots,m\}$.  Then by freezing the coefficients $y := (x_{m+1},\ldots,x_n) \in \F_2^{n-m}$, we see that we can write \eqref{efn} as an average of expressions of the form
$$ \E_{x \in \F_2^m}(-1)^{S_4(x) + c_{3,y} S_3(x) + c_{2,y} S_2(x) + c_{1,y} S_1(x) + c_{0,y} S_0 - Q_{0,y}} $$
for some $c_{0,y},\ldots,c_{3,y} \in \F_2$ and $Q_{0,y} \in \mathcal{P}_1(\F_2^m)$.  Applying \eqref{base} and the triangle inequality we thus conclude that
\begin{equation}\label{efn-2}
\E_{x \in\F_2^n} (-1)^{S_4(x) + c_3 S_3(x) + c_2 S_2(x) + c_1 S_1(x) + c_0 S_0 - Q_1(x)} ) = O(2^{-\Omega(n)}) .
\end{equation}

Now suppose instead that $Q_2 \in \mathcal{P}_2(\F_2^n)$ and $c_0,c_1,c_2,c_3 \in \F_2$, and consider the average
$$
 \E_{x \in \F_2^n} (-1)^{S_4(x) + c_3 S_3(x) + c_2 S_2(x) + c_1 S_1(x) + c_0 S_0 - Q_2(x)} .
$$
Then we can write
$$ Q_2(x) = \sum_{\{i,j\} \in E(\Gamma)} x_i x_j + Q_1(x)$$
for some $Q_1 \in \mathcal{P}_1(\F_2^n)$ and some graph $\Gamma$ on vertex set $[n]$ .  By Ramsey's theorem (see e.g. \cite[Section 4.2]{graham}), we can find a set $I \subseteq [n]$ of size $m = \Omega(\log n)$ such that the complete graph on vertex set $I$ either lies completely inside $E$, or is disjoint from $E$.  We can then repeat the above freezing argument (using \eqref{efn-2} instead of \eqref{base}) and conclude that
$$  \E_{x \in \F_2^n}(-1)^{S_4(x) + c_3 S_3(x) + c_2 S_2(x) + c_1 S_1(x) + c_0 S_0 - Q_2(x)}  = O( 2^{-\Omega(m)} ) = O( n^{-\Omega(1)} ).$$

Finally, suppose $Q_3 \in \mathcal{P}_1(\F_2^n)$ and $c_0,c_1,c_2,c_3 \in \F_2$, and consider the average
$$
 \E_{x \in \F_2^n} (-1)^{S_4(x) + c_3 S_3(x) + c_2 S_2(x) + c_1 S_1(x) + c_0 S_0 - Q_3(x)} .
$$
Then we can write
$$ Q_3(x) = \sum_{\{i,j,k\} \in E(\Gamma)} x_i x_j x_k + Q_2(x)$$
for some $Q_2 \in \mathcal{P}_2(\F_2^n)$ and some $3$-uniform hypergraph $\Gamma$ on vertex set $[n]$. Applying the bounds of Erd\H{o}s and Rado for the hypergraph Ramsey theorem (see e.g. \cite[Section 4.7]{graham}) we can find
a set $I \subset [n]$ of size $m = \Omega(\log \log n)$ such that the complete $3$-uniform hypergraph on $I$ either lies completely inside $E$ or is disjoint from $E$.  Using the freezing argument one last time, we obtain
\begin{equation}\label{u4-weak}
\E_{x \in \F_2^n}(-1)^{S_4(x) + c_3 S_3(x) + c_2 S_2(x) + c_1 S_1(x) + c_0 S_0 - Q_3(x)}  = O( m^{-\Omega(1)} ) = O( (\log \log n)^{-\Omega(1)} ).
\end{equation}
This is a bound of the form claimed in \eqref{u4b} of Theorem \ref{main-1}, except there is an extra logarithm. To remove it, we run the two Ramsey-theoretic arguments in parallel, by using the following variant of the Erd\H{o}s-Rado bound.

\begin{lemma}[Simultaneous Ramsey theorem]  Let $E_2 \subseteq \binom{[n]}{2}$ and $E_3 \subseteq \binom{[n]}{3}$ be a graph and $3$-uniform hypergraph respectively.  Then there exists a set $I \subset [n]$ of size $m = \Omega(\log \log n)$ such that for each $j=2,3$, the set $\binom{I}{j}$ either lies completely inside $E_j$ or is disjoint from $E_j$.
\end{lemma}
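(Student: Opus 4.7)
The plan is to reduce the statement to a single application of the Erdős-Rado theorem for $3$-uniform hypergraphs, applied to an enriched coloring that simultaneously encodes $E_2$ and $E_3$. Specifically, I would $8$-color the triples $\binom{[n]}{3}$ by
\[ \kappa(\{i,j,k\}) := \bigl(\mathbf{1}_{E_3}(\{i,j,k\}),\, \mathbf{1}_{E_2}(\{i,j\}),\, \mathbf{1}_{E_2}(\{i,k\}),\, \mathbf{1}_{E_2}(\{j,k\})\bigr) \in \{0,1\}^4, \]
where $i<j<k$. The standard Erdős-Rado bound for $8$-colored hypergraph Ramsey numbers then produces a $\kappa$-monochromatic $I \subseteq [n]$ of size $m = \Omega(\log \log n)$.

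First, the constancy of the first coordinate of $\kappa$ on $\binom{I}{3}$ immediately gives $E_3$-homogeneity on $I$. For $E_2$, I would show that the constant values $\alpha,\beta,\delta$ of the other three coordinates must in fact coincide, so that $\mathbf{1}_{E_2}$ takes a single value on $\binom{I}{2}$. Writing $I = \{v_1 < \cdots < v_m\}$ and assuming $m \geq 4$, the pair $\{v_1,v_3\}$ plays the $\{i,k\}$-role inside the triple $\{v_1,v_2,v_3\}$ and the $\{i,j\}$-role inside $\{v_1,v_3,v_4\}$, so $\alpha = \beta$. An analogous comparison using $\{v_2,v_3\}$ inside the triples $\{v_1,v_2,v_3\}$ and $\{v_2,v_3,v_4\}$ yields $\alpha = \delta$. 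Hence $\mathbf{1}_{E_2}$ takes the single value $\alpha$ on every pair inside $I$.

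The only substantive ingredient is the Erdős-Rado estimate itself, which I would prove in the usual way: iteratively pick $v_1, v_2, \ldots$ together with a shrinking set $V_i$ so that $\kappa(\{v_a, v_b, w\})$ depends only on $(a,b)$ for $w \in V_i$ and $a<b\leq i$. Passing from $V_i$ to $V_{i+1}$ costs a factor of $8^i$ (one $8$-coloring of $V_i$ is imposed for each of the $i$ new pairs $(a,i+1)$), so the bound $|V_T| \geq n/8^{T(T-1)/2}$ keeps $V_T$ non-empty for $T = \Omega(\sqrt{\log n})$ steps. Applying ordinary graph Ramsey to the resulting induced $8$-coloring of $\binom{[T]}{2}$ --- whose value on $(a,b)$ is the common color of $\kappa(\{v_a, v_b, w\})$ --- then produces a monochromatic subset of size $\Omega(\log T) = \Omega(\log \log n)$, as required. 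I do not foresee any serious obstacle; the one piece of the argument that deserves care is the consistency check in the previous paragraph, since that is the step that allows the single $8$-colored Erdős-Rado invocation to handle both $E_2$ and $E_3$ with no extra logarithmic loss.
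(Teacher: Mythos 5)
Your proof is correct, and the consistency check is the right observation: if $I = \{v_1 < \cdots < v_m\}$ is $\kappa$-monochromatic with last-three-coordinate values $(\alpha,\beta,\delta)$, then comparing the pair $\{v_1,v_3\}$ across the triples $\{v_1,v_2,v_3\}$ and $\{v_1,v_3,v_4\}$ gives $\alpha=\beta$, and similarly $\alpha=\delta$; hence every pair inside $I$ receives value $\alpha$ under $\mathbf{1}_{E_2}$, since it appears in some triple of $\binom{I}{3}$. One small slip: your map $\kappa$ takes values in $\{0,1\}^4$, so it is a $16$-coloring rather than an $8$-coloring, and the factor lost per pair in the Erd\H{o}s--Rado iteration is $16$ rather than $8$. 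This is immaterial to the asymptotics; any bounded number of colors still yields $|V_T| \geq n/C^{T(T-1)/2}$, hence $T = \Omega(\sqrt{\log n})$ and $m = \Omega(\log T) = \Omega(\log\log n)$.

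Your route does differ from the paper's in its packaging. The paper runs an Erd\H{o}s--Rado-type algorithm that shrinks the candidate set so as to stabilize, simultaneously, the truth value of $\{x_i,x\}\in E_2$ (a vertex-indexed condition) and of $\{x_i,x_j,x\}\in E_3$ (a pair-indexed condition); this costs a factor $2^{-O(l^2)}$ at step $l$, yields a sequence of length $\Omega(\log^{1/3}n)$, and finishes by applying graph Ramsey to the induced pair-coloring (with a pigeonhole for the vertex-coloring). You instead fold $E_2$ into a $\{0,1\}^4$-coloring of triples, invoke the standard multicolour Erd\H{o}s--Rado bound as a black box, and recover $E_2$-homogeneity via the consistency trick. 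The underlying iteration is the same; what your version buys is a cleaner reduction to a single off-the-shelf Ramsey statement, at the cost of needing the (small but necessary) consistency argument, which the paper's inline handling makes unnecessary. Both give $m=\Omega(\log\log n)$.
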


\begin{proof}  We generate some vertices $x_1,\ldots,x_l$ by the following algorithm:
\begin{itemize}
\item Step 0.  Initialise $l=0$ and $J := [n]$.
\item Step 1.  By the pigeonhole principle, there exists $J' \subseteq J$ with $|J'| \gg 2^{-O(l^2)} |J|$ such that for any $i,j \in [l]$ and $x \in J'$, the truth value of the statements $\{ x_i, x\} \in E_2$ or $\{x_i,x_j,x\} \in E_3$ are independent of $x$.  Fix this $J'$.
\item Step 2.  Set $x_{l+1} := \min(J')$, replace $J$ by $J' \backslash \{x_{l+1}\}$, and increment $l$ to $l+1$.  If $J'$ is non-empty then return to Step 1; otherwise STOP.
\end{itemize}
One easily verifies that this algorithm terminates in $k = \Omega( \log^{1/3} n )$ steps to obtain a sequence $1 \leq x_1 \leq \ldots\leq x_l \leq n$ with the property that for any $1 \leq i < j \leq l$, the truth value of $\{ x_i, x_j \} \in E_2$ is independent of $j$, and for any $1 \leq i < j < k \leq l$, the truth value of $\{ x_i, x_j, x_k \} \in E_3$ is independent of $k$.  By an appeal to Ramsey's theorem for graphs one can then find a set $I \subset \{x_1,\ldots,x_k\}$ with $|I| \gg \log k \gg \log \log n$ with the desired properties.
\end{proof}

Note that by applying Ramsey's theorem for graphs and $3$-uniform hypergraphs sequentially, one would only get $m = \Omega(\log\log\log n)$ here.  The reader can easily verify that the logarithmic saving in this lemma propagates through the previous arguments to improve \eqref{u4-weak} to \eqref{u4b}.
\endproof

\section{General degrees and characteristics}\label{gen-sec}

It is natural to wonder for which $\F$ and $d$ the symmetric polynomials $S_d$ on $\F^n$ provide counterexamples to Conjecture \ref{ig}, the inverse conjecture for the $U^d$-norm. We do not have a complete answer to this question, but we give some partial results in this direction here.  For a more in-depth treatment of these issues, we refer the reader to the recent preprint \cite{lms}.

We begin with a general result that shows that $\| e_\F(S_d) \|_{U^d}$ is large whenever $d > |\F|$. This result (and in fact a generalisation of it which establishes the largeness of $\| e_{\F}(S_d) \|_{U^{d-p+2}}$ for $d \geq 2p$, where $p = |\F|$) was shown to us by the authors of \cite{lms} before we wrote this section. The following argument is a slight variant of theirs which, we believe, is worth having in the literature.

\begin{theorem}[Lower bound on Gowers norm]\label{lowgow}  Let $\F$ be a finite field, let $n \geq 1$, and let $d > |\F|$.  Let $S_d$ be the symmetric polynomial on $\F^n$, and let $f := e_\F(S_d)$.  Then $\|f\|_{U^d} \gg_{\F} 1$.
\end{theorem}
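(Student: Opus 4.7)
The plan is to reduce the Gowers norm computation to a finite character sum over a bounded-dimensional configuration space. Since $S_d$ has degree $d$, the $d$-th multiplicative derivative $\Delta_{h_1}\cdots\Delta_{h_d}f$ is independent of $x$, so
\[
\|f\|_{U^d}^{2^d} \;=\; \E_{h_1,\ldots,h_d\in\F^n}\, e_\F\!\bigl(T(h_1,\ldots,h_d)\bigr),
\]
where $T = D_{h_1}\cdots D_{h_d}S_d$ is the full polarisation of $S_d$. A direct count gives $T(h_1,\ldots,h_d)=\sum_{i_1,\ldots,i_d\in[n]\ \text{distinct}}\prod_j(h_j)_{i_j}$, and M\"obius inversion on the set partition lattice of $[d]$ rewrites this as
\[
T \;=\; \sum_{\pi}\, \mu_\pi\prod_{B\in\pi} Y_B, \qquad Y_B := \sum_{i=1}^n\prod_{j\in B}(h_j)_i,\qquad \mu_\pi := \prod_{B\in\pi}(-1)^{|B|-1}(|B|-1)!.
\]
The key observation is that in characteristic $p=|\F|$, the factorial $(|B|-1)!$ vanishes whenever $|B|\geq p+1$, while remaining nonzero at $|B|=p$ by Wilson's theorem. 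Thus $T$ is a fixed polynomial $\tilde T$ in the finite family $(Y_B)_{1\leq|B|\leq p}$, whose cardinality $N:=\sum_{k=1}^p\binom{d}{k}$ is bounded in terms of $d$ and $\F$ alone.

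Next I would show that as $(h_1,\ldots,h_d)$ vary uniformly, the family $(Y_B)$ is exponentially close to uniformly distributed in $\F^N$. For any nontrivial $(\xi_B)$, the identity
\[
\sum_B \xi_B Y_B \;=\; \sum_{i=1}^n Q\bigl((h_1)_i,\ldots,(h_d)_i\bigr), \qquad Q(z_1,\ldots,z_d) := \sum_B \xi_B\prod_{j\in B} z_j,
\]
combined with independence across coordinates $i$ gives $\E e_\F(\sum_B\xi_B Y_B)=\bigl[\E_{z\in\F^d}e_\F(Q(z))\bigr]^n$. Since the squarefree monomials $\prod_{j\in B}z_j$ are distinct elements of the standard monomial basis of functions $\F^d\to\F$, $Q$ is not identically zero on $\F^d$, so $|\E_z e_\F(Q)|<1$ strictly and this expectation decays exponentially. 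Fourier expansion on $\F^N$ then converts the Gowers norm into a finite computation:
\[
\|f\|_{U^d}^{2^d} \;=\; \frac{1}{|\F|^N}\sum_{y\in\F^N}e_\F(\tilde T(y))\;+\;O_{d,\F}(e^{-cn}).
\]

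The main obstacle is to show the normalised finite sum on the right is $\gg_{d,\F}1$. Two structural facts drive this. First, $\tilde T$ is multilinear in every $y_B$, and the linear coefficient of a \emph{top} variable $y_B$ with $|B|=p$ equals the nonzero constant $(-1)^{p-1}(p-1)!$ times the analogous polarisation polynomial for the smaller index set $[d]\setminus B$. Second, in the ambient variables one has the identity $D_h^p\equiv 0$ in characteristic $p$ (valid because $ph=0$ and $\binom{p}{k}\equiv 0\pmod p$ for $1\leq k\leq p-1$, so only the $k=0$ and $k=p$ terms in $D_h^p P(x)=\sum_k(-1)^{p-k}\binom{p}{k}P(x+kh)$ survive, and they cancel), which forces $T$ to vanish whenever any $p$ of the $h_j$ coincide. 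To execute the estimate on the finite sum I would sum out the top variables $y_B$ ($|B|=p$) one at a time using $\sum_{y\in\F}e_\F(ay)=|\F|\mathbf{1}_{a=0}$; the first fact ensures each such summation either contributes a factor of $|\F|$ or imposes an algebraic constraint of strictly smaller degree on the remaining lower-order variables, while the second fact guarantees the consistency needed for this iterative elimination to terminate on a nontrivial subvariety of $\F^N$ of positive density on which $\tilde T\equiv 0$. This yields a surviving contribution $|\F|^{-M}$ for some $M=M(d,\F)$ bounded independently of $n$, hence $\gg_{d,\F}1$. The delicate combinatorial bookkeeping needed to verify that the iterated elimination produces no unexpected cancellations (a subtlety that depends on $d\bmod p$, as seen in the model cases $(\F_2,d=3)\Rightarrow\|f\|_{U^3}^{8}=\tfrac{1}{8}+o(1)$ and $(\F_3,d=4)\Rightarrow\|f\|_{U^4}^{16}=3^{-7}+o(1)$) is the technical heart of the argument.
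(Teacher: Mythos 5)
Your reduction of $\|f\|_{U^d}^{2^d}$ to a character sum over the bounded-dimensional configuration space $\F^N$ is correct: the M\"obius-inversion identity, the vanishing of $\mu_\pi$ in $\F$ whenever some block has size $>p$, and the near-equidistribution of $(Y_B)_{1\le|B|\le p}$ (via the product-over-coordinates argument and the standard monomial basis) are all sound, and this is a legitimate route that does not appear in the paper. However, there is a genuine gap at the crucial final step, which you yourself flag: showing that
\[
\frac{1}{|\F|^N}\sum_{y\in\F^N}e_\F\bigl(\tilde T(y)\bigr)\;\gg_{d,\F}\;1
\]
is where all the work lies, and the ``iterated elimination'' sketch does not constitute a proof. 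Two concrete problems. First, when $d\ge 2p$ the linear coefficient of a top variable $y_B$ (which is, as you observe, a nonzero scalar times the polarisation polynomial on $[d]\setminus B$) still involves \emph{other} top variables $y_{B'}$ with $|B'|=p$ and $B'\subseteq[d]\setminus B$, so the elimination is not a cascade but a coupled multilinear system whose exponential sum depends on rank considerations you have not analysed. Second, even in the range $p<d<2p$ where the cascade is clean, summing out the top variables leaves $\sum_{y'} e_\F(\tilde T'(y'))\cdot\mathbf 1[\text{all }\binom{d}{p}\text{ constraints hold}]$, where $\tilde T'$ is the residual (constant-in-top-variables) part of $\tilde T$; you need $\tilde T'$ to be constant, or at least biased, on the constraint variety, and ``terminate on a nontrivial subvariety of positive density on which $\tilde T\equiv 0$'' is an assertion, not an argument. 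Your appeal to $D_h^p\equiv 0$ concerns the vanishing locus of $T$ on a thin diagonal in $(\F^n)^d$ and does not directly translate into information about $\tilde T$ on $\F^N$.

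The paper avoids the finite-sum computation entirely. Working on the variety $V:=\{x\in\F^n:S_1(x)=\dots=S_p(x)=0\}$, it uses the same M\"obius identity together with the invertibility of $(j-1)!$ in $\F$ for $j\le p$ to deduce recursively that $R_{\pi_{\triv,j}}(h^{(i_1)},\dots,h^{(i_j)})=0$ whenever the full combinatorial cube lies in $V$, whence $D_{h^{(1)}}\dots D_{h^{(d)}}S_d=0$ there (every surviving $\pi$ has a block of size $>p$ and so $\mu(\pi)=0$ in $\F$). This gives $\Delta_{h^{(1)}}\dots\Delta_{h^{(d)}}(f1_V)=\Delta_{h^{(1)}}\dots\Delta_{h^{(d)}}(1_V)$, hence $\|f1_V\|_{U^d}=\|1_V\|_{U^d}\ge\|1_V\|_{U^1}=|V|/|\F^n|\gg_\F 1$ by the recurrence lemma (Lemma \ref{mpr}). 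Fourier-expanding $1_V$ over $\F^p$ and applying the triangle inequality produces some $\xi$ with $\|f\,e_\F(\xi_1S_1+\dots+\xi_pS_p)\|_{U^d}\gg_\F 1$, and since $d>p$ the correction is a polynomial phase of degree at most $p<d$, so $\|f\|_{U^d}\gg_\F 1$ by \eqref{polyphase}. The payoff of the paper's route is that it trades your explicit finite character-sum estimate for the soft monotonicity $\|1_V\|_{U^d}\ge\|1_V\|_{U^1}$ plus a positive-density count, which requires no case analysis in $d\bmod p$. To complete your approach you would need to actually carry out the rank/elimination analysis of $\tilde T$, which, as your own model computations $(\F_2,d=3)$ and $(\F_3,d=4)$ suggest, is a nontrivial combinatorial exercise that you have not done.
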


\begin{proof}
For this, we must find some analogue of the computations earlier in the section and, in particular, the identity \eqref{qident}.  For this we need some more notation.  Let $\Pi_n$ denote the collection of all partitions $\pi = \{ C_1,\ldots,C_m\}$ of $[n]$ into disjoint sets $[n] = C_1 \cup \ldots \cup C_m$. For any partition $\pi = \{ C_1,\dots,C_m \} \in \Pi_n$, we associate the multilinear form $R_\pi: \F^n \times \ldots \times \F^n \to \F$ by
\[ R_{\pi}(h^{(1)},\dots,h^{(d)}) := \prod_{k=1}^m \sum_{j = 1}^n \prod_{i \in C_k} h^{(i)}_j.\]
Thus for example if $\pi$ is the partition of $[3]$ into $\{1,2\}$ and $\{3\}$ then we have
\[ R_{\pi}(h^{(1)}, h^{(2)},h^{(3)}) = (h^{(1)}_1h^{(2)}_1 + \dots + h^{(1)}_nh^{(2)}_n)(h^{(3)}_1 + \dots + h^{(3)}_n).\]
We define the \emph{M\"obius function} $\mu(\pi)$ of $\mu$ at $\pi$ by the formula
\begin{equation}\label{mupi}
\mu(\pi) := \prod_k (-1)^{|C_k|} (|C_k| - 1)!.
\end{equation}

We place a partial ordering on partitions $\pi$ by declaring $\pi' \preceq \pi$ if every set in $\pi'$ is contained in some set in $\pi$.  This has a minimal element $\pi_{\min} := \{ \{1\},\ldots,\{n\}\}$.  The M\"obius function can be shown\footnote{See for instance the series of exercises \cite[p. 103]{birkhoff}, or \cite[Lemma 4.1]{crt}.} to obey the M\"obius inversion identities $\mu(\pi_{\min}) = 1$ and $\sum_{\pi' \preceq \pi} \mu(\pi') = 0$ if $\pi \neq \pi_{\min}$. 

As a consequence we obtain the following variant of \eqref{qident}, which follows from \cite[Proposition 2.7]{lms}.

\begin{lemma}[Derivative of symmetric function]\label{symderiv}  For any $d \geq 1$ and $h^{(1)},\ldots,h^{(d)},x \in \F^n$, we have
\begin{equation}\label{qident-gen} D_{h^{(1)}}\ldots D_{h^{(d)}}S_d(x) = \sum_{\pi} \mu(\pi) R_\pi(h^{(1)},\ldots,h^{(d)}).\end{equation}
\end{lemma}

\begin{proof}  
Each $R_{\pi}$ may be expanded as a sum
\begin{equation}\label{eq227} R_{\pi}(h^{(1)},\dots,h^{(d)}) = \sum_{\pi \preceq\tau(i_1,\dots,i_n)} h^{(1)}_{i_1} \dots h^{(n)}_{i_n},\end{equation}
where $\tau(i_1,\dots,i_n)$ is the partition on $[n]$ induced by the indices $i_1,\dots,i_n$, two elements $s,t$ being placed in the same element of this partition if and only if $i_s = i_t$. 

On the other hand, from proof of Theorem \ref{main-1} we have
\begin{equation}\label{eq228}
\begin{split}
D_{h^{(1)},\dots,h^{(d)}}S_d(x) &= \sum_{\substack{1 \leq i_1,\dots,i_d \leq n \\ i_1,\dots,i_d \; \mbox{\scriptsize distinct}}} h^{(1)}_{i_1} \dots h^{(d)}_{i_d}\\
&= \sum_{\tau(i_1,\dots,i_n) = \pi_{\min}} h^{(1)}_{i_1} \dots h^{(n)}_{i_n}.
\end{split}
\end{equation} The claim now follows from the M\"obius inversion formula.
\end{proof}

To apply the identity \eqref{qident-gen}, we let $V \subseteq \F^n$ be the variety
$$ V := \{ x \in \F_2^n: S_1(x) = S_2(x) = \ldots = S_p(x) = 0 \},$$ where $p = |\F|$ (later on we will specialize to the case $p = 2$).
We claim the identity
\begin{equation}\label{defv}
 \Delta_{h^{(1)}} \ldots \Delta_{h^{(d)}}( f 1_V )(x) = \Delta_{h^{(1)}} \ldots \Delta_{h^{(d)}}( 1_V )(x) 
\end{equation}
for $x, h^{(1)},\ldots,h^{(d)} \in \F^n$.  To prove \eqref{defv}, it suffices to show that
$$ D_{h^{(1)}}\ldots D_{h^{(d)}}S_d(x) = 0$$
whenever $x, h^{(1)},\ldots,h^{(d)} \in \F^n$ are such that the cube $\{ x + \omega_1 h^{(1)} + \ldots + \omega_d h^{(d)}: \omega_1,\ldots,\omega_d \in \{0,1\} \}$ lies in $V$.  But if $x,h^{(1)},\dots,h^{(d)}$ are such elements then, by definition of $V$ and differentiation, we have
\begin{equation}\label{dhj}
D_{h^{(i_1)}} \ldots D_{h^{(i_j)}} S_j(x) = 0
\end{equation}
for all $j \in \{1,\dots,p\}$ and distinct $i_1,\ldots,i_j \in \{1,\ldots,d\}$. Note from \eqref{mupi} that the M\"obius function $\mu(\pi_{\triv,j})$ is invertible in $\F$ for all $1 \leq j \leq p$, where $\pi_{\triv,j}$ is the trivial partition $\{\{1,\dots,j\}\}$ of $[j]$. By expanding the left-hand side of \eqref{dhj} using the inversion formula \eqref{qident-gen}, we conclude recursively that
$$ R_{\pi_{\triv,j}}( h^{(i_1)},\ldots,h^{(i_j)} ) = 0$$
for all $1 \leq j \leq p$ and distinct $i_1,\ldots,i_j \in \{1,\ldots,d\}$.  This implies that
$$ R_{\pi}( h^{(1)},\ldots,h^{(d)} ) = 0$$
whenever all sets in $\pi$ have cardinality at most $p$.  On the other hand, if any set in $\pi$ has cardinality greater than $p$, we see from \eqref{mupi} that $\mu(\pi)$ vanishes in $\F$.  The claim \eqref{defv} now follows from one last application of \eqref{qident-gen}.

Using \eqref{defv} and Definition \ref{gd}, we conclude that
$$ \| f 1_V \|_{U^d} = \|1_V\|_{U^d}.$$
But by monotonicity of Gowers norms (see e.g. \cite[Chapter 11]{tao-vu}) we have
$$ \|1_V\|_{U^d} \geq \|1_V\|_{U^1} =  |V|/|\F^n|.$$
By applying Lemma \ref{mpr} we have $|V|/|\F^n| \gg_{\F} 1$, and so
$$ \| f 1_V \|_{U^d} \gg_{\F} 1.$$
On the other hand, we have the Fourier expansion
$$ 1_V = \E_{\xi \in \F^p} e_\F( \xi_1 S_1 + \ldots + \xi_p S_p ).$$
Using the triangle inequality for Gowers norms (see e.g. \cite[Lemma 3.9]{gowers} or \cite[Chapter 11]{tao-vu}) we conclude that
$$ \| f e_\F( \xi_1 S_1 + \ldots + \xi_p S_p ) \|_{U^d} \gg_{\F} 1$$
for some $\xi_1,\ldots,\xi_p \in \F$.  Theorem \ref{lowgow} now follows from \eqref{polyphase} and the hypothesis that $d > p$.
\end{proof}

As a consequence of the above theorem, we can completely characterise the behaviour of $(-1)^{S_d}$ in the characteristic $2$ case.

\begin{theorem}[Gowers norm behaviour of $S_d$ over $\F_2$]\label{sdgow}  Let $n \geq 1$ and $d \geq 1$ be integers, let $\F = \F_2$, and let $f := (-1)^{S_d}$ where $S_d$ is the $d^{\th}$ elementary symmetric function on $\F_2^n$. 
\begin{itemize}
\item If $d=1,2$, then $\|f\|_{U^d}, \|f\|_{u^d} = o(1)$.
\item If $d$ is not a power of $2$, then $\rank_{d-1}(S_d) \leq 2$ and $\|f\|_{U^d} \geq \|f\|_{u^d} \geq \frac{1}{4}$.
\item If $d$ is a power of $2$ which is at least $4$, then $\|f\|_{U^d} \gg 1$ and $\|f\|_{u^d} = o_d(1)$, where $o_d(1)$ goes to zero as $n \to \infty$ for fixed $d$.  \textup{(}In particular, Conjecture \ref{ig} fails for the $U^d$-norm on $\F_2^n$ for these values of $d$.\textup{)}
\end{itemize}
\end{theorem}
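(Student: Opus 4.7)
The plan is to treat the three regimes separately; the first two are elementary, while the third reuses and generalises the ideas of \S\ref{counter-sec}.

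For \emph{regime (i)}, $d\in\{1,2\}$, the function $f$ is itself a polynomial phase of degree at most $2$, so both norms can be bounded by a direct Gauss-sum estimate together with \eqref{uU}. When $d=1$, $f = (-1)^{x_1+\cdots+x_n}$ is a nontrivial character of $\F_2^n$, so $\Vert f\Vert_{U^1} = |\E f| = 0$. When $d=2$, a direct expansion gives $D_{h_1} D_{h_2} S_2 = S_1(h_1)S_1(h_2) + \langle h_1, h_2\rangle$, a symmetric bilinear form on $\F_2^n$ whose kernel is $\{0\}$ or $\{0,\mathbf{1}\}$ depending on the parity of $n$. Hence $\Vert f\Vert_{U^2}^4 = |\ker B|/2^n \leq 2^{1-n}$, so $\Vert f\Vert_{u^2} \leq \Vert f\Vert_{U^2} = o(1)$ by \eqref{uU}.

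For \emph{regime (ii)}, write $d = 2^{a_1} + \cdots + 2^{a_k}$ with $a_1 < \cdots < a_k$ and $k\geq 2$. Lucas's theorem \eqref{lucas} yields the factorisation $S_d = Q_1 Q_2$ with $Q_1 := S_{2^{a_1}}$ and $Q_2 := S_{d-2^{a_1}}$, both of degree strictly less than $d$; this immediately gives $\rank_{d-1}(S_d) \leq 2$. Now use the pointwise identity
\[
(-1)^{Q_1 Q_2} \;=\; \tfrac{1}{2}\bigl(1 + (-1)^{Q_1} + (-1)^{Q_2} - (-1)^{Q_1+Q_2}\bigr) \qquad (Q_1,Q_2 \in \F_2),
\]
combined with $\E f \cdot f = 1$, to obtain $2 = \E f + \E f(-1)^{Q_1} + \E f(-1)^{Q_2} - \E f(-1)^{Q_1+Q_2}$. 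At least one of the four inner products on the right therefore has absolute value $\geq 1/2$, and since $0,Q_1,Q_2,Q_1+Q_2 \in \mathcal{P}_{d-1}(\F_2^n)$ this gives $\Vert f\Vert_{u^d} \geq 1/2 \geq 1/4$; the companion bound $\Vert f\Vert_{U^d} \geq \Vert f\Vert_{u^d}$ is \eqref{uU}.

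For \emph{regime (iii)}, where $d = 2^a$ with $a\geq 2$, the lower bound $\Vert f\Vert_{U^d} \gg 1$ follows immediately from Theorem \ref{lowgow} (applicable since $d > 2 = |\F_2|$). For the upper bound $\Vert f\Vert_{u^d} = o_d(1)$, I generalise the Alon--Beigel iteration of \S\ref{counter-sec} from $d=4$ to arbitrary $d=2^a\geq 4$. The base case uses Lucas's theorem once more: $S_d(x)$ is the $a$-th bit of $|x|$ while each $S_i$ with $i<d$ depends only on bits $0,\ldots,a-1$, so by multisection of series on $\Z/2^{a+1}\Z$,
\[
\bigl|\E_x (-1)^{S_d(x) + \sum_{i<d} c_i S_i(x)}\bigr| \;=\; O_d(2^{-\Omega_d(n)})
\]
for any coefficients $c_i \in \F_2$, the cancellation coming from the anti-invariance of the exponent under flipping the $a$-th bit of $|x|$. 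The inductive step combines the symmetric-function restriction identity $S_d(x_I, y) = \sum_{\ell=0}^{d} S_\ell(x_I)\, S_{d-\ell}(y)$ with the Erd\H{o}s--Rado hypergraph Ramsey theorem: given $Q_j \in \mathcal{P}_j(\F_2^n)$, encode its degree-$j$ monomials as a $j$-uniform hypergraph $E_j \subseteq \binom{[n]}{j}$, then use Ramsey to produce $I \subseteq [n]$ with $|I| = \Omega(\log^{(j-1)} n)$ on which $\binom{I}{j}$ is homogeneous. Freezing $y := x_{[n]\setminus I}$ reduces the average to the inductive hypothesis at degree $j-1$ on $|I|$ variables: the $S_{d-\ell}(y)$-coefficients produced by the restriction identity are absorbed into the $c_i$ slots, the homogeneous contribution from $\binom{I}{j}$ becomes an extra $c_j S_j(x_I)$ term, and what is left of $Q_j$ is a polynomial of degree $j-1$ in $x_I$. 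Iterating from $j=d-1$ down to $j=0$ (via $j$-uniform Ramsey at each step) then gives $\Vert f\Vert_{u^d} = o_d(1)$. The main obstacle is precisely this bookkeeping: at every step one must verify that the restriction and the Ramsey choice together preserve the exact form of the inductive hypothesis, so that the accumulated symmetric corrections can be folded into the $c_i$ coefficients and the argument closes; a quantitative gain of one iterated logarithm is available by running the Ramsey reductions simultaneously as at the end of \S\ref{counter-sec}, but this is not needed for the qualitative conclusion.
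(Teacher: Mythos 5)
Your proof follows the paper's own argument closely in all three regimes: hand computation (equivalently, the Gauss sum estimate) for $d=1,2$, Lucas factorisation plus pigeonholing when $d$ is not a power of $2$, and Theorem \ref{lowgow} combined with the Alon--Beigel multisection/Ramsey iteration when $d$ is a power of $2\geq 4$. One small remark: in regime (ii) your displayed identity $(-1)^{Q_1 Q_2}=\tfrac12\bigl(1+(-1)^{Q_1}+(-1)^{Q_2}-(-1)^{Q_1+Q_2}\bigr)$ is the correct one (and your pigeonholing then in fact gives the slightly sharper bound $\|f\|_{u^d}\geq\tfrac12$), whereas the paper's displayed Fourier expansion misstates the normalising constant and a sign, though the paper's stated conclusion $\|f\|_{u^d}\geq\tfrac14$ still follows.
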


\begin{proof}  The cases $d=1,2$ can be computed by hand (using Lemma \ref{gauss} for the $d=2$ case).  
If $d$ is not a power of $2$, then from Lucas' theorem \eqref{lucas} we can express $S_d$ as a product $S_{d_1} S_{d_2}$ for some $d_1,d_2$ with $0 < d_1,d_2 < d$ and $d=d_1+d_2$, which gives the desired bound on $\rank_{d-1}(S_d)$. By Fourier analysis in $\F^{k+1}_2$ we may therefore write
\[ (-1)^{S_d} = \frac{1}{4}( 1 + (-1)^{S_{d_1}} + (-1)^{S_{d_2}} + (-1)^{S_{d_1}+S_{d_2}} ). \] Thus $(-1)^{S_d}$ must have an inner product of at least $\frac{1}{4}$ with at least one polynomial phase of degree strictly less than $d$, which gives the lower bound on $\|f\|_{u^d}$ in this case.  The lower bound on $\|f\|_{U^d}$ then follows from \eqref{uU}.

When $d$ is a power of $2$, one verifies (as in the proof of Theorem \ref{main-1}) that $S_d(x) = 1$ precisely when $x$ is equal to $d,\ldots,2d-1 \md{2d}$, whereas $S_{d'}$ for $d' < d$ is periodic with period dividing $d$.  Using multisection of series as before, we can conclude an analogue of \eqref{base} for $S_d$ instead of $S_4$, and by repeating the Ramsey arguments one obtains the desired bound $\|f\|_{u^d} = o_d(1)$.  Finally, the lower bound on $\|f\|_{U^d}$ follows from Theorem \ref{lowgow}.  This establishes all the claims of the theorem.
\end{proof}

\begin{remark}  When $\F=\F_2$ and $d$ is a power of two, the above theorem shows that $(-1)^{S_d}$ does not correlate strongly with any polynomial phase in $\F_2^n$ of order $d-1$ or less.  However, the argument we used to prove this showed that $S_d$ was still \emph{locally polynomial} of degree $d-1$ on the subvariety $V := \{ x \in \F_2^n: S_1(x)=S_2(x) = 0 \}$, in the sense of \cite{gt:inverse-u3}.  This raises the possibility that Conjecture \ref{ig} may be salvaged by working with \emph{locally} polynomial phases instead of global ones; in fact this formulation of the conjecture was already implicit in \cite[Section 13]{gt:inverse-u3}. 
\end{remark}

\end{document}